\def\TC{\protect\operatorname{TC}}
\def\zcl{\protect\operatorname{zcl}}
\def\hdim{\protect\operatorname{hdim}}
\newtheorem{ejem}{Example}
\newtheorem{defi}[ejem]{Definition}
\newtheorem{teo}[ejem]{Theorem}
\newtheorem{prop}[ejem]{Proposition}
\newtheorem{lema}[ejem]{Lemma}
\newtheorem{coro}[ejem]{Corollary}
\numberwithin{ejem}{section}
\begin{document}
\title{Bounds for the higher topological complexity of configuration spaces of trees}

\author{Teresa I. Hoekstra-Mendoza}

\date{\empty}

\maketitle

\begin{abstract}
For a tree $T$, we show that the higher topological complexity of the unordered configuration
space of $n$ points in $T$ is maximal for many values of $n$.

\end{abstract}

 \section{Introduction}
The concept of topological complexity “TC” was introduced by 
Farber motivated by the most basic problem of robot motion planning:
finding the smallest number of continuous instructions for a robot to
move from one point to another in a path-connected space.

 Rudyak extended Farber's notion of
topological complexity by defining, for $s \geq 2$, the $s$th topological complexity $TC_s (X)$ of a path-connected space $X$ which recovers Farber's original notion  for $s = 2.$

Given a topological space $X$, the  $s$-topological complexity $TC_s(X)$ can be described as 
  $TC_s(X)+1$ being the minimal number of open sets which cover the product $X^s$ such that on each open set, $e_s$ admits a section. Here $e_s: PX \rightarrow X^s$ is the sequential evaluation defined in Section 2.2.

The space $X$ is often viewed as the space of configurations of some real-world system. One example is when $X$ is the space of configurations of $n$ robots which move around a factory along a system of one-dimensional tracks. Such a system of tracks can be interpreted as a graph (a one-dimensional CW complex). 
  
For a finite graph $\Gamma$ and a positive integer $n$, let $\mathcal{C}^n\Gamma$ denote the configuration space of $n$ ordered points on $\Gamma$,
$$
\mathcal{C}^n\Gamma = \left\{(x_1,\ldots,x_n)\in \Gamma^n\colon x_i\neq x_j \mbox{ for } i\neq j \right\}.
$$
The usual right action of the $n$-symmetric group $\Sigma_n$ on $\mathcal{C}^n\Gamma$ is given by $(x_1,\ldots,x_n)\cdot \sigma=(x_{\sigma(1)},\ldots,x_{\sigma(n)})$, and $U\mathcal{C}^n\Gamma$ stands for the corresponding orbit space, the configuration space of $n$ unlabelled points on $\Gamma$. Both $\mathcal{C}^n\Gamma$ and $U\mathcal{C}^n\Gamma$ are known to be aspherical (\cite{A}); their corresponding fundamental groups are denoted by $P_n\Gamma$ (the pure $n$-braid group on $\Gamma$) and $B_n\Gamma$ (the full $n$-braid group or, simply, the $n$-braid group on $\Gamma$). We focus on the case of a tree $\Gamma=T$ (a tree is a connected graph which has no cycles).  

Our main goal is to study the topological complexity of the configuration spaces $\mathcal{C}^n T.$
 The topological complexity of these configuration spaces is related
to the number of essential vertices of $T$, which are the vertices of degree greater than or equal to three--more specifically, on how many vertices of degree exactly three there are in relation to the number of essential vertices of higher degree, and on how they are distributed along the tree $T$.
  
In \cite{S} Scheirer gave bounds for the usual ($s=2$) topological complexity of the configuration spaces of trees. Using slightly different techniques, we generalize Scheirer's results for any $s \geq 2$.
  
\  
  
We start section two with some terminology we shall use, as well as an introduction to discrete Morse theory and topological complexity. We also give a desription of Abrams' discrete model, and Farley and Sabalka's discrete gradient vector field, which are the tools we will use in the rest of the paper.  
  
In section three we give a description of how to obtain cup products in the cohomology ring of $U\mathcal{D}^nT$ for any tree $T$ and any $n \geq 4$, noticing that when the tree is binary, its cohomology ring is an exterior face ring.

In section four we prove our main results; we prove that the higher topological complexity or $U\mathcal{D}^nT$ is maximal for any tree $T$ and many values of $n$.

Finally in section five we compare our results in the special case when $s=2$ to Scheirer's results, and how to obtain at Scheirer's terminology starting from ours and viceversa.
   
\section{Preliminaries}
We start this section by introducing some notation and terminology which we shall use thoughout the paper.

Fix once and for all a planar embedding together with a root (a vertex of degree~1) for $T$. Order the vertices of $T$ as they are first encountered through the walk along the tree that (a) starts at the root vertex, which is assigned the ordinal 0, and that (b) takes the left-most branch at each intersection given by an essential vertex (turning around when reaching a vertex of degree 1). Vertices of $T$ will be denoted by the assigned non-negative integer. An edge of $T$, say with endpoints $r$ and $s$, will be denoted by the ordered pair $(r,s)$, where $r<s$. Furthermore, the ordering of vertices will be transferred to an ordering of edges by declaring that the ordinal of $(r,s)$ is $s$.  

Let $d(x)$ stand for the degree of a vertex $x$ of $T$, so there are $d(x)$ ``directions'' from $x$. For a vertex~$x$ different from the root, the direction from $x$ that leads to the root is defined to be the $x$-direction 0; $x$-directions $1, 2, \dots , d(x)-1$ (if any) are then chosen following the positive orientation coming from the planar embedding.  For instance, if $x$ is not the root and the vertex $y$ incident to $x$ in $x$-direction 0 is not essential (i.e.~$d(y)\leq2$), then $y=x-1$. Likewise, if $d(x)\geq2$, then $x+1$ is the vertex incident to $x$ in $x$-direction 1. It will be convenient to think of the only direction from the root vertex $0$ as 0-direction 1, in particular there is no $0$-direction 0.
Given an edge $e$ we shall denote by $ \iota(e)$ and $\tau(e)$ its endpoints with $\iota (e) < \tau(e)$.

Fix essential vertices $x_1<\cdots<x_m$ of T. The complement in $T$ of the set $\{x_1,\ldots,x_m\}$ decomposes into $1+ \sum_{i=1}^m \left(d(x_i)-1\right)$ conected components $C_{i,\ell_i}=C_{i,\ell_i}(x_1,\ldots,x_m)$, where $0\leq i\leq m$, $\ell_0=1$, and $1\leq\ell_i\leq d(x_i)-1$ for $i>0$. The closure of each $C_{i,\ell_i}$ is a subtree of $T$. $C_{0,1}$ is the component containing the root $0$, while $C_{i,\ell_i}$ (for $i>0$) is the component whose closure contains $x_i$ and is located on the $x_i$-direction $\ell_i$. The set $B(C_{i,\ell_i})$ of ``bounding'' vertices of a component $C_{i,\ell_i}$ is defined to be the intersection of the closure of $C_{i,\ell_i}$ with $\{x_1,\ldots,x_m\}$. Note that $x_i\in B(C_{i,\ell_i})$ for $i>0$, however the root $0$ is not considered to be a bounding vertex of $C_{0,1}$, just as no leaf of $T$ (i.e., a vertex of degree 1 other than the root) is considered to be a bounding vertex of any $C_{i,\ell_i}$. Furthermore, the set of pruned leaves is $L_{i,\ell_i}:=B(C_{i,\ell_i})\setminus\{x_i\}$. 

\begin{ejem}
Consider the tree of Figure \ref{comp}(left), we can see in Figure \ref{comp}(right) the connected components of $T-\{x_1,x_2,x_3\}$. The only non empty sets of pruned leaves are $L_{0,1} = \{x_1\}$ and $L_{1,2}=\{x_2, x_3\}$  
\end{ejem}

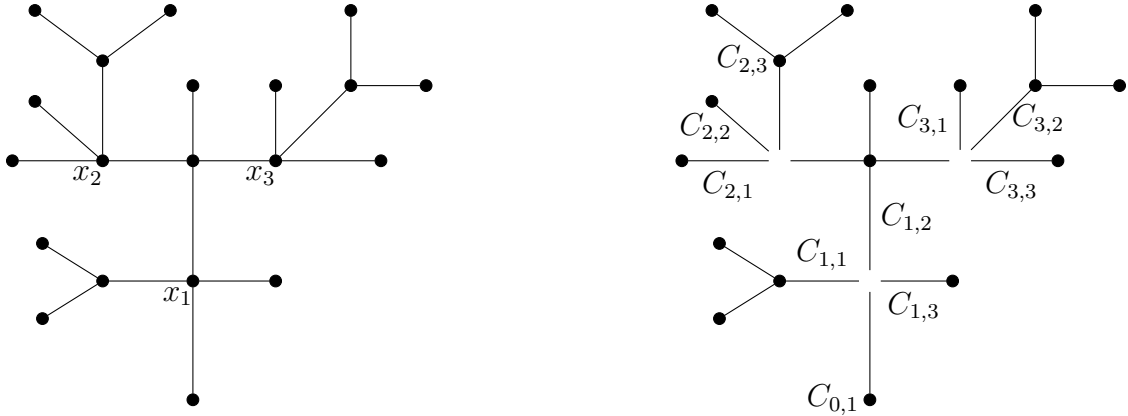
\begin{figure}[h!]
\begin{tikzpicture}[scale=1.0,rotate = 180, xscale = -1]

\node[circle, draw, scale=.4, fill=black] (1) at ( 3.5, 10.18) {};
\node (3) at ( 3.5, 8.6) {};
\node (4) at ( 2.3, 7) {};
\node[circle, draw, scale=.4, fill=black] (5) at ( 1.4, 6.21) {};
\node[circle, draw, scale=.4, fill=black] (6) at ( 1, 7) {};
\node[circle, draw, scale=.4, fill=black] (7) at ( 4.6, 8.6) {};
\node[circle, draw, scale=.4, fill=black] (8) at ( 1.5, 8.1) {};
\node[circle, draw, scale=.4, fill=black] (9) at ( 2.3, 8.6) {};
\node[circle, draw, scale=.4, fill=black] (10) at ( 2.3, 5.67) {};
\node[circle, draw, scale=.4, fill=black] (11) at ( 1.4, 5) {};
\node[circle, draw, scale=.4, fill=black] (12) at ( 3.2, 5) {};
\node[circle, draw, scale=.4, fill=black] (18) at ( 1.5, 9.1) {};
\node[circle, draw, scale=.4, fill=black] (19) at ( 3.5, 7) {};
\node[circle, draw, scale=.4, fill=black] (20) at ( 3.5, 6) {};
\node (21) at ( 4.7, 7) {};
\node[circle, draw, scale=.4, fill=black] (22) at ( 4.7, 6) {};
\node[circle, draw, scale=.4, fill=black] (23) at ( 5.7, 6) {};
\node[circle, draw, scale=.4, fill=black] (24) at ( 6, 7) {};
\node[circle, draw, scale=.4, fill=black] (25) at ( 5.7, 5) {};
\node[circle, draw, scale=.4, fill=black] (26) at ( 6.82, 6) {};

\node[circle, draw, scale=.4, fill=black] (1x) at ( -5.5, 10.18) {};
\node[circle, draw, scale=.4, fill=black] (3x) at ( -5.5, 8.6) {};
\node (v1) at (-5.7,8.8){$x_1$};
\node[circle, draw, scale=.4, fill=black] (4x) at ( -6.7, 7) {};
\node (v2) at (-6.9,7.2){$x_2$};
\node[circle, draw, scale=.4, fill=black] (5x) at ( -7.6, 6.21) {};
\node[circle, draw, scale=.4, fill=black] (6x) at ( -7.9, 7) {};
\node[circle, draw, scale=.4, fill=black] (7x) at ( -4.4, 8.6) {};
\node[circle, draw, scale=.4, fill=black] (8x) at ( -7.5, 8.1) {};
\node[circle, draw, scale=.4, fill=black] (9x) at ( -6.7, 8.6) {};
\node[circle, draw, scale=.4, fill=black] (10x) at ( -6.7, 5.67) {};
\node[circle, draw, scale=.4, fill=black] (11x) at ( -7.6, 5) {};
\node[circle, draw, scale=.4, fill=black] (12x) at ( -5.8, 5) {};
\node[circle, draw, scale=.4, fill=black] (18x) at ( -7.5, 9.1) {};
\node[circle, draw, scale=.4, fill=black] (19x) at ( -5.5, 7) {};
\node[circle, draw, scale=.4, fill=black] (20x) at (-5.5, 6) {};
\node (v3) at (-4.6,7.2){$x_3$};
\node[circle, draw, scale=.4, fill=black] (21x) at ( -4.4, 7) {};
\node[circle, draw, scale=.4, fill=black] (22x) at ( -4.4, 6) {};
\node[circle, draw, scale=.4, fill=black] (23x) at ( -3.4, 6) {};
\node[circle, draw, scale=.4, fill=black] (24x) at ( -3, 7) {};
\node[circle, draw, scale=.4, fill=black] (25x) at ( -3.4, 5) {};
\node[circle, draw, scale=.4, fill=black] (26x) at ( -2.4, 6) {};

\draw (3) -- (1)node[left]{$C_{0,1}$};
\draw (7) --node[below]{$C_{1,3}$} (3);
\draw (3) --node[above]{$C_{1,1}$} (9);
\draw (6) --node[below]{$C_{2,1}$} (4);
\draw (5) --node[left]{$C_{2,2}$} (4);
\draw (4) -- (10)node[left]{$C_{2,3}$};
\draw (11) -- (10);
\draw (10) -- (12);
\draw (9) -- (18);
\draw (8) -- (9);
\draw (19) --node[right]{$C_{1,2}$} (3);
\draw (20) -- (19);
\draw (21) -- (19);
\draw (22) --node[left]{$C_{3,1}$} (21);
\draw (23) --node[right]{$C_{3,2}$} (21);
\draw (24) --node[below]{$C_{3,3}$} (21);
\draw (19) -- (4);
\draw (23) -- (25);
\draw (26) -- (23);

\draw (3x) -- (1x);
\draw (7x) -- (3x);
\draw (3x) -- (9x);
\draw (6x) -- (4x);
\draw (5x) -- (4x);
\draw (4x) -- (10x);
\draw (11x) -- (10x);
\draw (10x) -- (12x);
\draw (9x) -- (18x);
\draw (8x) -- (9x);
\draw (19x) -- (3x);
\draw (20x) -- (19x);
\draw (21x) -- (19x);
\draw (22x) -- (21x);
\draw (23x) -- (21x);
\draw (24x) -- (21x);
\draw (19x) -- (4x);
\draw (23x) -- (25x);
\draw (26x) -- (23x);
\end{tikzpicture}
\caption{The tree $T$ and the connected components of $T-\{x_1, x_2, x_3\}$.}\label{comp}
\end{figure}
\subsection{Discrete Morse theory}
A very powefull tool for analyzing configuration spaces of graphs is discrete Morse theory, since it allows us to reduce the amount of cells in a complex while preserving the topological properties.
Assume throughout this section that $X$ is a regular complex.
\begin{defi}
A cell $\sigma$ of $X$ is called:
\begin{itemize}
\item critical, provided it does not appear as an entry of any pair of $W$;
\item redundant, provided there is a cell $\sigma'$ such that $(\sigma,\sigma')\in W$;
\item collapsible, provided there is a cell $\sigma'$ such that $(\sigma',\sigma)\in W$.
\end{itemize}
$W$ is called a discrete vector field if each cell of $X$ appears as an entry of at most one pair of $W$. In other words, $W$ is a discrete vector field provided any cell of $X$ is of one and only one of the three types above.
\end{defi}
For a redundant cell $\tau$ of $X$, we shall denote by $W(\tau )$ the unique cell of $X$ with $(\tau, W(\tau))\in W$.
\begin{defi}
Let $W$ be a discrete vector field on $X$. A sequence of $k$-cells, $\tau_1,\ldots,\tau_n$ satisfying $\tau_i \neq \tau_{i+1}$ for $ i =1,\ldots,n-1$ is called an upper $W$-path of length $n$ if, for each $i=1,\ldots,n-1$, $\tau_i$ is redundant and $\tau_{i+1}$ is a face of $W(\tau_i)$.
Similarly, if $\tau_i$ is collapsible with $\tau_i=W(\sigma_i)$, and $\sigma_i$ is a face of $\tau_{i-1}$ for $i=2, \dots ,n$ then the sequence is called a lower $W$-path of length $n$.
 The $W$-path is closed if $\tau_1=\tau_n$. We say that $W$ is a gradient vector field provided it does not admit closed $W$-paths.
\end{defi}

We can also think of the gradient paths as directed paths in the Hasse diagram as follows.
Let $X$ denote a finite regular cell complex. The Hasse diagram of $X$, $H_X$ is a directed graph, where the vertices are the cells of $X$, and there exists an arrow from the vertex $v$ to the vertex $w$ if $w$ is a face of $v$, and $\text{dim} (v)= \text{dim} (w)+1$. Given a discrete vector field $W$ on $X$, the modified Hasse diagram $H_X(W)$ is the directed graph obtained from $H_X$ by reversing every arrow belonging to $W$. A $W$-path is a directed path in $H_X(W)$ which alternates reversed arrows with arrows in $H_X$.

We need to recall how gradient paths recover (co)homological information. In the rest of the section we assume $W$ is a gradient field on $X$.

Start by fixing an orientation on each cell of $X$ and, for cells $a^{(p)}\subset b^{(p+1)}$, consider the incidence number $\iota_{a,b}$ of $a$ and $b$, i.e. the coefficient ($\pm1$, since $X$ is regular) of $a$ in the expression of $\partial(b)$. Here $\partial$ is the boundary operator in the cellular chain complex $C_*(X)$. The Morse cochain complex $\mathcal{M}^*(X)$ is then defined to be the graded $R$-free\footnote{Cochain coefficients are taken in a ground ring $R$, as we are interested in cup products.} module generated in dimension $p\geq0$ by the duals\footnote{We omit the use of an asterisk for dual elements.} of the oriented critical cells $A^{(p)}$ of $X$. The definition of the Morse coboundary map in $\mathcal{M}^*(X)$ requires the concept of multiplicity of upper/lower paths. For a path of length two, multiplicity is given by
\begin{equation}\label{multiplicitydefinition}
\mu(a_0\nearrow b_1\searrow a_1)=-\iota_{a_0,b_1}\cdot\iota_{a_1,b_1}\mbox{ \ \ and \ \ }
\mu(c_0\searrow d_1\nearrow c_1)=-\iota_{d_1,c_0}\cdot\iota_{d_1,c_1},
\end{equation}
and, in the general case, it is defined to be a multiplicative function with respect to concatenation of  paths. The Morse coboundary is then defined by
\begin{equation}\label{morsecoboundary}
\partial(A^{(p)})=\sum_{B^{(p+1)}}\left(\sum_{b^{(p)}\subset B} \left(\iota_{b,B} \sum_{\gamma\in\overline{\Gamma}(b,A)}\mu(\gamma)\right)\right)\cdot B.
\end{equation}
In other words, the Morse theoretic incidence number of $A$ and $B$ is given by the number of  gradient paths $\overline{\gamma}$ from $B$ to $A$ counted with multiplicity $\mu(\overline{\gamma}):=\iota_{b,B}\cdot\mu(\gamma)$.

\subsection{Abrams discrete model and Farley-Sabalka's gradient field}
Since the configuration space of graph is obtain by removing certain points from the product of the graph with itself, it does not have a CW-complex structure (nor a cubical or simplicial complex structure). This is why we use Abrams discrete model. For a tree $T$, think of $T^n$ as a cubical set. 

 Abrams discrete model for $\mathcal{C}^nT$ is the largest cubical subset $\mathcal{D}^nT$ of $T^n$ inside $\mathcal{C}^nT$. In other words, $\mathcal{D}^nT$ is obtained by removing open cubes from $T^n$ whose closure intersect the fat diagonal. As usual, the symmetric group $\Sigma_n$ acts on the right of $\mathcal{D}^nT$ by permuting factors. The action permutes in fact cubes, and the quotient complex is denoted by $U\mathcal{D}^nT$. 

Thus a cell in $U\mathcal{D}^nT$ can be written as $c=\{a_1, a_2, \dots, a_n\}$ where each $a_i$ is either a vertex or an edge of $T$, $a_i \cap a_j= \emptyset$ if $i \neq j$  and the dimension $c$ is  $ |\{ i \in \{1, \dots, n\}: a_i \in E(T)\}|$, that is,  the number of edges of $T$ that are in $c$. Each $a_i$ is called an ingredient of $c$ (vertex-ingredient or edge-ingredient).

\begin{teo}\cite{A} \label{teoa}
Let $\Gamma$ be a graph with at least $n$ vertices. Suppose
\begin{enumerate}
\item each path between distinct vertices of degree not equal to 2 in $\Gamma$ contains at least $n-1$ edges, and
\item each loop at a vertex in $\Gamma$ which is not homotopic to a constant map
contains at least $n+1$ edges.
\end{enumerate} 
Then, $\mathcal{C}^n\Gamma$ and $U\mathcal{C}^n\Gamma$ deformation retract onto  $\mathcal{D}^n\Gamma$ and $U\mathcal{D}^n\Gamma$,
respectively.
\end{teo}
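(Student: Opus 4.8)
Give $\Gamma^n$ its product cubical structure. In it, $\mathcal{C}^n\Gamma$ is the open subset obtained by deleting the fat diagonal $\Delta=\{x:x_i=x_j\text{ for some }i\ne j\}$, while $\mathcal{D}^n\Gamma$ is the union of those open cubes $c_1\times\cdots\times c_n$ all of whose factor closures $\overbar{c_i}$ are pairwise disjoint; equivalently, $\mathcal{D}^n\Gamma$ is the maximal subcomplex of $\Gamma^n$ contained in $\mathcal{C}^n\Gamma$. The entire content of the theorem is therefore to build a strong deformation retraction of $\mathcal{C}^n\Gamma$ onto $\mathcal{D}^n\Gamma$, carried out $\Sigma_n$-equivariantly so that it descends to the orbit spaces. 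It is convenient to phrase this through a regular neighbourhood: $\mathcal{D}^n\Gamma$ is compact and sits inside the open set $\mathcal{C}^n\Gamma$, so a small regular neighbourhood $N$ of $\mathcal{D}^n\Gamma$ in $\Gamma^n$ lies in $\mathcal{C}^n\Gamma$ and collapses onto $\mathcal{D}^n\Gamma$; it then suffices to deformation retract $\mathcal{C}^n\Gamma$ onto $N$, i.e.\ to push every configuration whose occupied cells have intersecting closures into one whose do not, \emph{without moving configurations already in $\mathcal{D}^n\Gamma$}.

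The plan for that push is an explicit minimal ``spreading''. Metrize $\Gamma$ so each edge is isometric to $[0,1]$ and fix the planar/orientation datum of Section~2. Given $x\in\mathcal{C}^n\Gamma$, the coordinates $x_i$ whose occupied cell has closure meeting that of another occupied cell group into ``clusters'', each confined to a connected sub-arc or sub-cycle of $\Gamma$; one slides the points inside each cluster along $\Gamma$ to the canonical separated (``combed'') position determined by the orientation datum, holding all other coordinates fixed. A configuration is already combed exactly on $\mathcal{D}^n\Gamma$, so the homotopy is stationary there, and it is $\Sigma_n$-equivariant once performed symmetrically in the $n$ coordinates. This is where the two numerical hypotheses enter, and they enter sharply: an arc of $\Gamma$ lying between two consecutive vertices of degree $\ne2$ has, by~(1), at least $n-1$ edges, which is exactly the room needed to hold $n$ cells with pairwise disjoint closures; a non-null-homotopic cycle needs, by~(2), one more edge ($n+1$ in total) because its two ends are identified; and the standing assumption that $\Gamma$ has at least $n$ vertices disposes of the one remaining degenerate shape, a single cycle. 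A short pigeonhole count shows these thresholds are exactly right, so every cluster really can be combed inside the arc or cycle that contains it.

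The main obstacle is turning these local combing moves into a single globally defined, continuous deformation retraction. The recipes on different strata — where a stratum is a locus of constant combinatorial type of the configuration — must be chosen canonically enough to match on the overlaps and to vary continuously as a point crosses a vertex (where the combinatorial type jumps), and one must rule out the flow cycling: for the latter a Lyapunov functional, for instance a weighted count of pairs $(i,j)$ with $\overbar{c_i}\cap\overbar{c_j}\ne\varnothing$ refined by how much further the two points can still be driven apart, should be non-increasing under the homotopy and constant only on $\mathcal{D}^n\Gamma$. Once the equivariant retraction exists, passing to $\Sigma_n$-orbit spaces gives the statement for $U\mathcal{C}^n\Gamma$ and $U\mathcal{D}^n\Gamma$; and since $\mathcal{D}^n\Gamma$ is a non-positively curved cube complex — its vertex links are flag, compatibility of edge-directions at a vertex being detected pairwise — it is aspherical, which together with the retraction also yields the asphericity of $\mathcal{C}^n\Gamma$ and $U\mathcal{C}^n\Gamma$ used elsewhere in the paper.
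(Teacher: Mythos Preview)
The paper does not prove this theorem at all: it is quoted verbatim from Abrams' thesis \cite{A} and used as a black box, so there is no ``paper's own proof'' to compare against. Your write-up is therefore not a reconstruction of anything in the present paper but a sketch of Abrams' original argument.

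As a sketch of that argument your outline is in the right spirit --- one does build an equivariant deformation retraction by spreading clustered points along the arcs of $\Gamma$, and hypotheses (1) and (2) are precisely the room needed to comb $n$ points on an arc, respectively on an essential cycle. But what you have written is a strategy, not a proof: you yourself flag ``the main obstacle'' of gluing the local combing moves into a globally continuous, stratum-compatible homotopy, and then gesture at a Lyapunov functional without defining it or checking its properties. Abrams' actual proof is considerably more delicate than your paragraph suggests; the continuity across strata where combinatorial type changes, and the verification that the flow terminates in $\mathcal{D}^n\Gamma$, occupy most of the work. If you intend to include a proof here you would need to either carry out those details or, as the paper does, simply cite \cite{A}. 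The final paragraph about non-positive curvature and asphericity is correct but extraneous to the statement being proved.
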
 

Theorem \ref{teoa} allows us to work with the space $U\mathcal{D}^nT$ which is much easier to work with than $U\mathcal{C}^nT$.

In \cite{FS} Farley and Sabalka gave a discrete gradient vector field for the space $U\mathcal{D}^nT$.
For a vertex $x$ of $T$ different from the root $0$, let $e_x$ be the unique edge of $T$ of the form $(y,x)$ with $y<x$. A vertex-ingredient $x$ of a cell $c$ is said to be blocked in $c$ if $x=0$ or, else, if $e_x \cap c \neq \emptyset$ and $x$ is said to be unblocked in $c$ otherwise. An edge-ingredient $e$ of a cube $c$ is said to be order-disrespecting in $c$ provided $e$ is of the form $(x,y)$ and there is a vertex ingredient $z$ in $c$ with $x<z<y$ and $z$ adyacent to $x$ (in particular $x$ must be an essential vertex); $e$ is said to be order-respecting in $c$ otherwise. Blocked vertex-ingredients and order-disrespecting edge ingredients in $c$ are said to be critical.
A cell is \textit{critical}  in Farley and Sablaka's gradient vector field if all of its ingredients are critical.
\subsection{Topological complexity}
For $s\geq2$, the $s$th topological complexity of a path-connected space $X$, $\TC_s(X)$, is defined as the sectional category of the evaluation map $e_s\colon PX\to X^s$ which sends a (free) path on $X$, $\gamma\in PX$, to the $s$-tuple$$e_s(\gamma)=\left(\gamma\left(\frac{0}{s-1}\right),\gamma\left(\frac{1}{s-1}\right),\ldots, \gamma\left(\frac{s-1}{s-1}\right)\right).$$ 

A standard estimate for the $s$th topological complexity of a space $X$, which we will use to obtain our bounds, is given by:
\begin{prop}\cite{B}\label{cota}
For a $c$-connected space $X$ having the homotopy type of a CW complex, 
$$ \zcl_s(X)\leq\TC_s(X)\leq s\hdim(X)/(c+1).$$
\end{prop}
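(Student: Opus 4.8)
The plan is to recognize $\TC_s(X)$ as the sectional category of the path fibration $e_s\colon PX\to X^s$ and then feed that fibration into the two classical Schwarz estimates for sectional category: a cohomological lower bound and a categorical upper bound.

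For the lower bound, I would first observe that evaluation at $0\in[0,1]$ gives a homotopy equivalence $PX\xrightarrow{\ \simeq\ }X$, under which $e_s$ is homotopic to the iterated diagonal $\Delta_s\colon X\to X^s$, $x\mapsto(x,\dots,x)$; consequently $e_s^{*}$ is identified with $\Delta_s^{*}\colon H^{*}(X^s;R)\to H^{*}(X;R)$. By definition $\zcl_s(X)$ is the cup-length of the ideal $\ker\Delta_s^{*}$, i.e.\ the largest $k$ for which there exist classes $u_1,\dots,u_k$ with $\Delta_s^{*}(u_i)=0$ and $u_1\cup\cdots\cup u_k\neq 0$. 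Since the sectional category of a fibration $p$ is at least the cup-length of $\ker p^{*}$, this yields $\zcl_s(X)\le\TC_s(X)$.

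For the upper bound I would run the chain
$$\TC_s(X)=\operatorname{secat}(e_s)\ \le\ \cat(X^s)\ \le\ s\cdot\cat(X)\ \le\ \frac{s\,\hdim(X)}{c+1}.$$
The first inequality holds for any fibration over a base $B$ because an open subset of $B$ that is contractible in $B$ admits a section of the restricted fibration (the null-homotopic inclusion lifts), so a categorical cover of $X^s$ is in particular a sectional-category cover of $e_s$. The second inequality is subadditivity of Lusternik--Schnirelmann category on finite products. For the third, replace $X$ by a CW complex $X'$ of dimension $\hdim(X)$ homotopy equivalent to it: connectivity and $\cat$ are homotopy invariants, so $X'$ is still $c$-connected and $\cat(X)=\cat(X')$, and the classical dimension--connectivity bound $\cat(X')\le\dim(X')/(c+1)$ for a $c$-connected CW complex gives $\cat(X)\le\hdim(X)/(c+1)$. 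Chaining the displayed inequality with the lower bound proves the proposition.

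The genuinely routine parts are passing to a minimal-dimensional CW model and the observation $\hdim(X^s)\le s\,\hdim(X)$. The only place that needs a bit of care is making the identification $e_s^{*}\simeq\Delta_s^{*}$ precise and keeping the normalized conventions for $\cat$, $\TC_s$ and $\zcl_s$ mutually consistent, so that the two Schwarz estimates are applied without an off-by-one shift; beyond that I do not anticipate any real obstacle, as every ingredient is a standard result of Schwarz and Fox.
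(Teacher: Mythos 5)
Your argument is correct. The paper offers no proof of this proposition: it is quoted verbatim from \cite{B}, and your two-step scheme (Schwarz's cup-length lower bound for $\operatorname{secat}(e_s)$ after identifying $\ker e_s^{*}$ with $\ker\Delta_s^{*}$, and the chain $\operatorname{secat}(e_s)\leq\cat(X^s)\leq s\hdim(X)/(c+1)$) is essentially the argument given in that reference. The only cosmetic difference is that the cited source bounds $\cat(X^s)$ directly by $\hdim(X^s)/(c+1)$, using that $X^s$ is again $c$-connected and $\hdim(X^s)\leq s\hdim(X)$, rather than passing through subadditivity $\cat(X^s)\leq s\cat(X)$; the two routes are interchangeable here. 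Your worry about normalization is already resolved by the paper's conventions: $\TC_s(X)+1$ is the minimal number of open sets, $\zcl_s$ counts factors in a nonzero product, and the reduced $\cat$ satisfies $\cat(Y)\leq\dim(Y)/(c+1)$ for $c$-connected CW complexes, so no off-by-one shift occurs.
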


The notation $\hdim(X)$ stands for the (cellular) homotopy dimension of $X$, i.e.~the minimal dimension of a CW complex having the homotopy type of $X$. On the other hand (and for our purposes), 
 the $s$th zero-divisor cup-length of $X$, $\zcl_s(X)$, is defined in purely cohomological terms\footnote{All cohomology groups in this paper are taken with $\mathbb{Z}$-coefficients.}. The zero-divisor cup-length, $\zcl_s(X)$ is the largest non-negative integer $\ell$ such that there are classes $z_j\in H^*(X^s)$, each with trivial restriction under the iterated diagonal inclusion $\Delta_s\colon X \hookrightarrow X^s$, and such that the cup product $z_1\cdots z_\ell\in H^*(X^s)$ is non-zero. Each such class $z_i$ is called an $s$th zero-divisor for $X$. The ``zero-divisor'' terminology comes from the observation that the map induced in cohomology by $\Delta_s$ restricts to the $s$-fold tensor power $H^*(X)^{\otimes s}$ to yield the $s$-iterated cup product.

\section{The cohomology ring of $U\mathcal{D}^nT$}
In \cite{GH} we analized the cohomology ring of $U\mathcal{D}^nT$, in particular we showed how to compute cup products. In view of proposition \ref{cota}, this is particularly useful for obtaining the bounds in section 4.

One of the most important aspects about configuration spaces of trees, is the following theorem, which allows to obtain cup products easily.

\begin{teo} \cite{df}
The Morse differential in $U\mathcal{D}^nT$ vanishes and therefore for each $m \geq 0$, a graded basis of $H^m(U\mathcal{D}^nT)$ is given by the cohomology classes of the duals of the critical $m$-cells.
\end{teo}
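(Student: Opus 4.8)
The statement to prove is: \emph{The Morse differential in $U\mathcal{D}^nT$ vanishes, and hence for each $m\geq0$ a graded basis of $H^m(U\mathcal{D}^nT)$ is given by the cohomology classes of the duals of the critical $m$-cells.}

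\medskip

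\textbf{Proof proposal.}

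The plan is to show that the Morse coboundary map of \eqref{morsecoboundary} is identically zero on critical cells, from which the statement about bases is immediate: once $\partial\equiv 0$ on the Morse cochain complex $\mathcal{M}^*(U\mathcal{D}^nT)$, the cohomology of $\mathcal{M}^*$ in degree $m$ is exactly the free $R$-module on the duals of the critical $m$-cells, and the general machinery of discrete Morse theory (the chain homotopy equivalence between $C_*(X)$ and the Morse complex) identifies this with $H^m(U\mathcal{D}^nT)$. So the entire content is the vanishing of $\partial$.

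First I would fix a critical $p$-cell $A$ and a critical $(p+1)$-cell $B$, and analyze the gradient paths from $B$ to $A$ that contribute to the Morse-theoretic incidence number in \eqref{morsecoboundary}. Recall that such a path starts by passing to a codimension-one face $b\subset B$ and then alternates: from a redundant cell one moves up along $W$ to $W(b)$, then down to a face, and so on, terminating at the critical cell $A$. The key structural fact about the Farley--Sabalka gradient field is that it is built one ``move'' at a time: a redundant cell has a distinguished unblocked vertex-ingredient of smallest relevant index, and $W$ slides that vertex across its adjacent edge; a collapsible cell is hit by the reverse move. I would track, along any gradient path issuing from a face of the critical cell $B$, the multiset of ingredients and in particular which vertex is being moved at each step. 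The crucial claim is that no such path can ever reach a critical cell: either the path is forced to return to $B$ (closing up, impossible since $W$ is a gradient field by Farley--Sabalka), or at every stage the cell it visits has a non-critical ingredient that the next step must address, so the path can never stabilize at an all-critical cell $A\neq$ ``trivial''. Concretely, when we take a face $b$ of $B$ by replacing one edge-ingredient $e=(x,y)$ of $B$ by one of its endpoints, $b$ is collapsible or redundant or critical; since $B$ is critical, every such $e$ is order-disrespecting, and replacing it by a vertex produces a cell in which that vertex (or the witnessing vertex $z$ with $x<z<y$) is unblocked, making $b$ redundant and determining $W(b)$ uniquely — and one checks that following $W(b)$ and coming back down never produces a new critical cell but only leads back toward $B$. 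Thus $\overline{\Gamma}(b,A)=\emptyset$ whenever $A$ is critical, so every coefficient in \eqref{morsecoboundary} vanishes.

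The main obstacle is the bookkeeping in the previous paragraph: one must give a genuinely combinatorial invariant of cells along a gradient path — something like the lexicographically-ordered sequence of ``the smallest index at which the cell fails to be critical'' or a weight counting unblocked vertices and order-respecting edges — and prove it is strictly monotone (or otherwise controlled) along every alternating path, so that a path cannot terminate at a critical cell. This is exactly the type of argument Farley and Sabalka use to show their field has no closed paths, and the vanishing of the differential is a known consequence in the tree case; I would cite \cite{FS} for the no-closed-paths property and adapt their normal-form/monotonicity lemma to rule out any gradient path between two distinct critical cells. A clean way to organize it is: (i) show a gradient path starting at a face of a critical cell $B$ has all its ``moves'' of a restricted type (only vertices can be moved, never ``un-moved'' past an essential vertex in the disrespecting configuration); (ii) deduce that the sequence of cells is eventually periodic, hence by acyclicity constant-after-return to $B$; (iii) conclude no critical $A\neq B$ is reachable and that $B$ itself contributes with total multiplicity forced to cancel — but in fact step (i)–(ii) already give emptiness of $\overline{\Gamma}(b,A)$ for critical $A$, so there is nothing left to cancel. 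Once $\partial\equiv0$ is established, the basis statement follows formally.

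Remark: an alternative, more conceptual route would be to appeal to a known computation of the Poincaré polynomial or total Betti number of $U\mathcal{D}^nT$ (for instance via the Świątkowski or Abrams--Ghrist descriptions) and match it, degree by degree, with the number of critical cells; if these agree in every degree then, since the Morse complex is free with the right ranks and its cohomology has the right ranks, the differential must vanish. I would mention this but prefer the direct path-counting argument above, since it also gives the explicit basis needed for the cup-product computations in Section~3 and the bounds in Section~4 via Proposition~\ref{cota}.
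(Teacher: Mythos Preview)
The paper does not prove this theorem at all: it is stated with a citation to \cite{df} and used as a black box. So strictly speaking there is no ``paper's own proof'' to compare against; you are attempting to supply one.

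Your proposed argument, however, has a genuine gap. You assert that for any face $b$ of a critical $(p+1)$-cell $B$ one has $\overline{\Gamma}(b,A)=\emptyset$ for every critical $p$-cell $A$, i.e.\ that no gradient path from $b$ ever reaches a critical cell. This is false already for $p=0$. For any tree $T$ there is a \emph{unique} critical $0$-cell, namely $A=\{0,1,\ldots,n-1\}$ (all vertices stacked at the root), and every $0$-cell whatsoever flows to it along the unique upper $W$-path obtained by repeatedly moving the smallest unblocked vertex toward the root. In particular, if $B=\{k\,|\,x,p,q\}$ is a critical $1$-cell and $b_\pm$ are its two faces (replace the edge by $\iota(e)$ or by $\tau(e)$), then both $\overline{\Gamma}(b_+,A)$ and $\overline{\Gamma}(b_-,A)$ are nonempty (each a singleton). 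The coefficient of $B$ in $\partial A$ vanishes not because these sets are empty but because the two contributions cancel: each upper path of $0$-cells has multiplicity $+1$ step by step, so the total is $\iota_{b_+,B}+\iota_{b_-,B}=1-1=0$. The same phenomenon persists in higher dimensions: gradient paths between critical cells do exist, and the content of Farley's theorem is a cancellation, not an emptiness, statement.

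Concretely, what goes wrong in your outline is step (ii): a gradient path starting at a face of a critical $B$ is \emph{not} forced to return to $B$; the acyclicity of $W$ only forbids closed paths, not paths that terminate at a different critical cell. Your monotone-invariant idea is the right shape for proving acyclicity, but it does not by itself prevent the path from landing on a critical $A$. Farley's actual proof in \cite{df} proceeds by pairing up, for each critical $B$ and each of its edge-ingredients, the two resulting faces and showing that the (unique) gradient paths they initiate reach the same critical target with opposite incidence signs; the combinatorics of the tree (no cycles) is what makes the target and the path unique. If you want to write out a proof, that is the mechanism you need to implement, not a non-existence claim. Your alternative rank-matching route is valid in principle, but the known computations of the Betti numbers of $U\mathcal{D}^nT$ typically rely on exactly this vanishing, so you would need an independent source for the ranks to avoid circularity.
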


Assume $p=(p_1, \dots, p_r)$ and $q=(q_1, \dots, q_s)$ are integer vectors such that $p_i \geq 0 $ for $1 \leq i \leq r$, $q_i \leq 0 $ for $1 \leq i \leq s$ and there exists at least one $i \in \{1, \dots, r\}$ such that $p_i >0$.  Let $x$ be an essential vertex of degree $r+s+1$, we shall denote by $\{k|x,p,q\}$ the critical cell which has $k$ vertices blocked at the root vertex, the edge $(x, y)$ where $y$ is the vertex lying on the $x$-direction $r+1$, $p_i$ vertices blocked on the $x$-direction $i$ for $1 \leq i \leq r$, and $q_i$ vertices blocked on the $x$-direction $i+r$ for $ 1 \leq i \leq s$ (see Figure \ref{crit}).
Similarly we denote a critical $m$-cell by $\{k|x_1,p_1,q_1|x_2,p_2,q_2|\dots| x_m,p_m,q_m\}$ (see for example Figure \ref{3}).

\begin{figure}
\centering
\begin{tikzpicture}
\node[circle, draw, scale=.4, fill=black] (1) at (0,0){};
\node (01) at (0,0.3){$0$};
\node[circle, draw, scale=.4, fill=black] (2) at (1,0){};
\node[circle, draw, scale=.4, fill=black] (3) at (2,0){};
\node[circle, draw, scale=.4, fill=black] (4) at (4,0){};
\node  (04) at (3.8,0.3){$x$};
\node[circle, draw, scale=.4, fill=black] (5) at (5,0){};
\node[circle, draw, scale=.4, fill=black] (6) at (6,0){};
\node[circle, draw, scale=.4, fill=black] (7) at (4,1){};
\node[circle, draw, scale=.4, fill=black] (8) at (4,2){};
\node[circle, draw, scale=.4, fill=black] (9) at (4,-1){};
\node[circle, draw, scale=.4, fill=black] (0) at (4.7,.7){};
\node[circle, draw, scale=.4, fill=black] (11) at (5.4,1.4){};
\node[circle, draw, scale=.4, fill=black] (12) at (6.1,2.1){};
\node[circle, draw, scale=.4] (13) at (4.7,-0.7){};

\draw (1)--(2);
\draw (2)--(3);
\draw[dotted] (3)--(4);
\draw[very thick] (4)--(5);
\draw (5)--(6);
\draw (4)--(7);
\draw (7)--(8);
\draw (9)--(4);
\draw (0)--(4);
\draw (0)--(11);
\draw (11)--(12);
\draw[dotted] (4)--(13);
\end{tikzpicture}
\caption{The critical 1-cell $\{3|x,(2,3),(1,0,1)\}$}. \label{crit}
\end{figure}
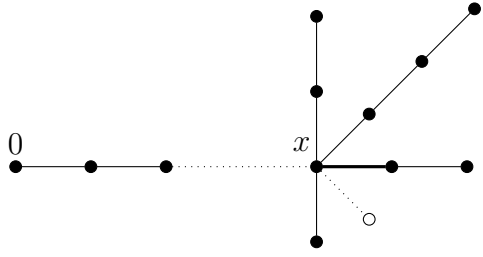

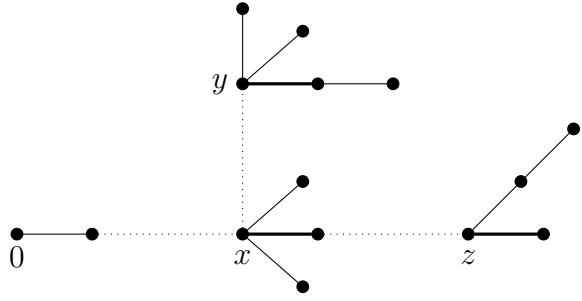
\begin{figure}
\centering
\begin{tikzpicture}
\node[circle, draw, scale=.4, fill=black] (1) at (0,0) {};
\node (01) at (0,-.3) {$0$};
\node[circle, draw, scale=.4, fill=black] (2) at (1,0) {};
\node[circle, draw, scale=.4, fill=black] (3) at (3,0) {};
\node (03) at (3,-0.3) {$x$};
\node[circle, draw, scale=.4, fill=black] (4) at (4,0) {};
\node[circle, draw, scale=.4, fill=black] (5) at (3.8,.7) {};
\node[circle, draw, scale=.4, fill=black] (6) at (3.8,-.7){};
\node[circle, draw, scale=.4, fill=black] (7) at (6,0) {};
\node (07) at (6,-0.3) {$z$};
\node[circle, draw, scale=.4, fill=black] (8) at (7,0) {};
\node[circle, draw, scale=.4, fill=black] (0) at (7.4,1.4){};
\node[circle, draw, scale=.4, fill=black] (11)at (6.7,.7){};
\node[circle, draw, scale=.4, fill=black] (12)at (3,2) {};
\node (012)at (2.7,2) {$y$};
\node[circle, draw, scale=.4, fill=black] (13)at (4,2) {};
\node[circle, draw, scale=.4, fill=black] (14)at (3,3) {};
\node[circle, draw, scale=.4, fill=black] (15)at (5,2) {};
\node[circle, draw, scale=.4, fill=black] (16)at (3.8,2.7){};
\draw (1)--(2);
\draw[dotted] (2)--(3); 
\draw[very thick] (3)--(4);
\draw (3)--(5);
\draw (3)--(6);
\draw[dotted](4)--(7);
\draw[very thick](7)--(8);
\draw (0)--(11);
\draw (7)--(11);
\draw[dotted] (3)--(12);
\draw[very thick] (12)--(13);
\draw (12)--(14);
\draw (13)--(15);
\draw (12)--(16);
\end{tikzpicture}
\caption{The critical 3-cell $c=\{2|x,(0,1),(0,1)|y,(1,1),(1)|z,(2),(0)\}$.} \label{3}
\end{figure}

Given the factors \begin{equation}\label{aproduct}
\left\{k_1| x_1,(p_{1,1},\ldots,p_{1,r_1}\hspace{-.3mm}),(q_{1,1},\ldots,q_{1,s_1}\hspace{-.3mm})\right\}{\cdots}
\left\{k_m|x_m,(p_{m,1},\ldots,p_{m,r_m}\hspace{-.3mm}),(q_{m,1},\ldots,q_{m,s_m}\hspace{-.3mm})\right\}
\end{equation} we are going to define the interaction parameters as follows, which will help us decide when a cup product of cells is non zero. 

\begin{defi}
The interaction parameters $\mathcal{R}_0$, $\mathcal{P}_i:=(\mathcal{P}_{i,1},\ldots,\mathcal{P}_{i,r_i})$ and $\mathcal{Q}_i:=(\mathcal{Q}_{i,1},\ldots,\mathcal{Q}_{i,s_i})$ of the factors in~$(\ref{aproduct})$ are given by
\begin{align*}
\mathcal{R}_0 & :=n\,+\!\sum_{x_j\in L_{0,1}}(k_j-n), \\
\mathcal{P}_{i,\ell_i} &:=p_{i,\ell_i}, + \sum_{x_j\in L_{i,\ell_i}}(k_j-n), \mbox{ for } i\in\{1,\ldots,m\} \mbox{ and } \ell_i\in\{1,\dots, r_{i}\},\mbox{ and }\\
\mathcal{Q}_{i,\ell_i} &:=q_{i,\ell_i}\,+ \sum_{x_j\in L_{i,\ell_i+r_{i}}}(k_j-n),\mbox{ for } i\in\{1,\ldots,m\} \mbox{ and }\ell_i\in\{1,\dots, s_{i}\}.
\end{align*}
If $\mathcal{R}_0\geq0$, $\mathcal{P}_i\geq0$ and $\mathcal{Q}_i\geq0$ for all $i=1,\ldots,m$, we say that the factors in~$(\ref{aproduct})$ interact weakly and, if in addition $\mathcal{P}_i>0$ for every $i$, we say that the factors in~$(\ref{aproduct})$ interact strongly. Otherwise, we say that the factors in~$(\ref{aproduct})$ do not interact.
\end{defi}

In the above definition $\mathcal{P}_i,\mathcal{Q}_i \geq 0$ means that every entry is greater than or equal to zero and $\mathcal{P}_i>0$ means that at least one entry is strictly greater than zero.

\begin{ejem}\label{para}
Assume we have the tree of Figure \ref{comp} sufficiently subdivided for $n=10 $ and consider the cells $ \{1|x_1,(1,7),(0) \}, \{7|x_2,(2,0),(0)\}$ and $ \{6|x_3, (1), (1,1)\}$. Recall that $L_{0,1} = \{x_1\}$ and $ L_{1,2}=\{x_2, x_3\}$.
Then the interaction parameters are $\mathcal{R}_0 = 10 + (9-10) = 1 \mathcal{P}_{1,2} = 7 + (7-10) + (6-10) =0$ and $\mathcal{P}_{i,j} = p_{i,j}$ and $ \mathcal{Q}_{i,j} = q_{i,j}$ for the remaining interaction parameters thus $ \mathcal{P}_{1,1} = 1,  \mathcal{Q}_{1,1}=0,  \mathcal{P}_{2,1}=2,  \mathcal{P}_{2,2}=0, \mathcal{Q}_{2,1}=0, \mathcal{P}_{3,1}=1, \mathcal{Q}_{3,1}=1$ and $\mathcal{Q}_{3,2}=1$.
\end{ejem}

\begin{prop}\cite{GH}\label{interactionproducts}
The product~$(\ref{aproduct})$ agrees with the critical $m$-cell $$\{\mathcal{R}_0| x_1,\mathcal{P}_{1},\mathcal{Q}_{1}|\cdots| x_m,\mathcal{P}_{m},\mathcal{Q}_{m}\}$$ provided the factors of~$(\ref{aproduct})$ interact strongly.
Recall $\mathcal{P}_i= (\mathcal{P}_{i,1}, \mathcal{P}_{i,2}, \dots , \mathcal{P}_{i,r_i})$ and $\mathcal{Q}_i= (\mathcal{Q}_{i,1}, \mathcal{Q}_{i,2}, \dots , \mathcal{Q}_{i,s_i})$.
\end{prop}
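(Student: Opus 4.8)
\textbf{Proof plan for Proposition~\ref{interactionproducts}.}

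The strategy is to compute the Morse-theoretic cup product of the $m$ factors in~(\ref{aproduct}) directly via the gradient-path description of the cohomology of $U\mathcal{D}^nT$, and to show that the only surviving contribution is the critical cell $\{\mathcal{R}_0|x_1,\mathcal{P}_1,\mathcal{Q}_1|\cdots|x_m,\mathcal{P}_m,\mathcal{Q}_m\}$ with coefficient $\pm1$. The key observation is that a cup product of duals of critical cells is obtained by taking a cellular cocycle representative (the cell representing the product in the cubical complex $U\mathcal{D}^nT$ is obtained by a ``shuffle'' of the ingredients of the factors, the blocked vertices at the root accumulating, and the vertices blocked in the various $x_i$-directions accumulating), and then pushing this cellular class back into the Morse complex by following gradient paths. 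So the first step is to identify, for a fixed collection of representing cubes of the factors placed on disjoint regions of $T$ (which is always possible after sufficient subdivision), the cellular cochain representing their product, and to read off which cube $c$ in $U\mathcal{D}^nT$ it is supported on: $c$ has $\sum k_i$ vertices near the root (some of which will slide past the pruned leaves $x_j\in L_{0,1}$ lying between $x_1$ and the root) and, in each component $C_{i,\ell_i}$, a number of vertices equal to $p_{i,\ell_i}$ plus the overflow of vertices that spill out of the pruned leaves $x_j\in L_{i,\ell_i}$ once those $x_j$ are occupied by their own edge-ingredients. That bookkeeping is exactly what the interaction parameters $\mathcal{R}_0,\mathcal{P}_{i,\ell_i},\mathcal{Q}_{i,\ell_i}$ record: the term $k_j-n$ measures the number of ``extra'' vertices that the subtree hanging from $x_j$ cannot absorb and must therefore push into the ambient component.

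The second step is the Morse reduction: starting from this cellular cube $c$, one applies the Morse coboundary formula~(\ref{morsecoboundary}), equivalently one collapses $c$ along the Farley--Sabalka gradient until reaching a linear combination of critical cells. Here one uses the standard fact (Farley--Sabalka) that the gradient flow on a cube supported in a subtree sorts vertex-ingredients outward toward the leaves and collects the blocked vertices appropriately, and that the flow is ``local'' to each component. One must show: (a) the flow starting at $c$ reaches the critical cell $\{\mathcal{R}_0|x_1,\mathcal{P}_1,\mathcal{Q}_1|\cdots\}$; (b) there is a \emph{unique} gradient path to it, so the coefficient is a single $\pm1$; and (c) any other critical cell appearing with nonzero coefficient in the Morse representative of $c$ actually vanishes after passing to cohomology — but by the theorem of \cite{df} quoted above the Morse differential is zero, so the Morse representative \emph{is} the cohomology class and every critical cell in it survives; hence one must instead show directly that no other critical cell appears, i.e. the gradient flow out of $c$ is deterministic enough that $c$ collapses onto a single critical cell. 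This determinacy is where the strong-interaction hypothesis $\mathcal{P}_i>0$ is used: it guarantees that each essential vertex $x_i$ remains ``active'' (has a vertex-ingredient available on some positive direction to make its edge order-disrespecting) throughout the collapse, so that the edge-ingredient at $x_i$ is never forced to move and the flow cannot branch.

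The third step is simply to verify that the cube one obtains at the end of the flow is indeed the critical cell claimed, which is a matter of matching the definitions of $\{k|x,p,q\}$ and of the interaction parameters — routine once the bookkeeping of step one is in place.

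\textbf{Main obstacle.} The crux is step two, specifically the uniqueness/determinacy of the gradient path from the product cube $c$ to a single critical cell. One has to argue carefully that, although $c$ may be far from critical and the Farley--Sabalka flow on a single cube can in principle branch, the particular shape of $c$ coming from a cup product — edges sitting at the essential vertices $x_1,\dots,x_m$ and at the first available edge in each relevant direction, with all remaining ingredients being vertices — together with $\mathcal{P}_i>0$ forces the reduction to proceed one component at a time with no choices, so that exactly one critical cell is produced and with coefficient $\pm1$ (which, after fixing orientations consistently, can be taken to be $+1$). Handling the interaction across pruned leaves — making precise that the $k_j-n$ overflow vertices migrate into the ambient component in a well-defined way and do not create spurious order-disrespecting edges elsewhere — is the technically delicate part of this argument.
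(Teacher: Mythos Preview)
The paper does not prove Proposition~\ref{interactionproducts}: it is quoted from \cite{GH} (note the citation attached to the proposition header) and no argument is given in the present paper. So there is nothing here to compare your proposal against directly.

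That said, your outline is essentially the standard route for such a computation and is consistent with how the result is used later in the paper. The Morse-theoretic cup product is computed by lifting each critical cocycle to a cellular cocycle via upper $W$-paths, multiplying cellularly, and then projecting back via lower $W$-paths; your description of ``placing the factors on disjoint regions and following the flow'' is a reasonable informal rendering of this. Your identification of the role of the strong-interaction hypothesis is on target: the condition $\mathcal{P}_i>0$ is exactly what keeps the edge at $x_i$ order-disrespecting after the vertex-ingredients have been redistributed, so that the resulting cube is already critical and no further branching of lower paths occurs. One point to tighten: rather than saying the flow ``cannot branch,'' it is cleaner to argue that after the cellular product is formed and the obvious order-respecting reductions are performed, one lands directly on a critical cell, so the lower-path projection is the identity on that term; the bookkeeping with the $k_j-n$ overflows is then a straightforward count, not a delicate dynamical argument.
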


\begin{ejem}
Consider again the cells of Example \ref{para}. Since every interaction paramenter is greater or equal than zero, and, for every $i \in \{1,2,3\}$ there exists $j$ such that $P_{i,j} >0$ we have that the three factors interact strongly and thus by Proposition \ref{interactionproducts} their cup product is the 3-cell $\{1|x_1, (1,0),(0)|x_2,(2,0),(0)|x_3,(1),(1,1)\}$. 
\end{ejem}

\begin{teo}\cite{GH}\label{fact}
Any critical $m$-cell $\{\mathcal{R}_0| x_1,\mathcal{P}_{1},\mathcal{Q}_{1}|\cdots| x_m,\mathcal{P}_{m},\mathcal{Q}_{m}\}$ is the strong interaction product of $m$ critical 1-cells.
\end{teo}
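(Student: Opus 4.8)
The plan is to reverse the recipe that defines strong interaction. Given the critical $m$-cell $c=\{\mathcal{R}_0|x_1,\mathcal{P}_1,\mathcal{Q}_1|\cdots|x_m,\mathcal{P}_m,\mathcal{Q}_m\}$, I want to exhibit $m$ critical $1$-cells $c_1,\dots,c_m$, with $c_i$ supported at the essential vertex $x_i$, whose strong interaction product is $c$. By Proposition~\ref{interactionproducts} it suffices to write down integer vectors $k_i$, $p_i=(p_{i,1},\dots,p_{i,r_i})$, $q_i=(q_{i,1},\dots,q_{i,s_i})$ defining honest critical $1$-cells, to check that these factors interact strongly, and to verify that the resulting interaction parameters $\mathcal{R}_0'$, $\mathcal{P}_i'$, $\mathcal{Q}_i'$ computed from the $c_i$ recover the data of $c$.

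First I would deal with the root coordinate. The total number of points is some fixed $n$, and in the cell $c$ there are $\mathcal{R}_0$ points blocked at the root. The contribution of $c_j$ to $\mathcal{R}_0'$ is governed by whether $x_j\in L_{0,1}$, i.e. whether $x_j$ is a pruned leaf of the root component; the formula is $\mathcal{R}_0'=n+\sum_{x_j\in L_{0,1}}(k_j-n)$. So I would set $k_j=n$ for every $x_j\notin L_{0,1}$ (these factors contribute nothing to the root count), and distribute the ``deficit'' $\mathcal{R}_0-n$ among the finitely many indices $j$ with $x_j\in L_{0,1}$ by choosing the corresponding $k_j$ so that $\sum_{x_j\in L_{0,1}}(k_j-n)=\mathcal{R}_0-n$. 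The same bookkeeping has to be done one level down for every pruned leaf $x_j$ of every component $C_{i,\ell_i}$: the quantity $k_j-n$ also enters $\mathcal{P}_{i,\ell_i}'$ or $\mathcal{Q}_{i,\ell_i}'$, so once the $k_j$ are fixed I would then choose the local blocking numbers $p_{i,\ell}$, $q_{i,\ell}$ of the factor $c_i$ by the "inverse" formulas $p_{i,\ell}=\mathcal{P}_{i,\ell}-\sum_{x_j\in L_{i,\ell}}(k_j-n)$ and $q_{i,\ell}=\mathcal{Q}_{i,\ell}-\sum_{x_j\in L_{i,\ell+r_i}}(k_j-n)$, and symmetrically subtract off what was added by the root pass.

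The delicate point — and I expect it to be the main obstacle — is feasibility: the vectors produced this way must define \emph{legal} critical $1$-cells (so each $p_{i,\ell}\geq 0$, each $q_{i,\ell}\leq 0$ in the sign convention of the $\{k|x,p,q\}$ notation, with at least one $p_{i,\ell}>0$, and $0\le k_j\le n$ together with enough room on the tree to place that many blocked points), and simultaneously the $m$ factors must interact \emph{strongly}, not merely weakly. Because $c$ itself is a critical cell, its own parameters satisfy $\mathcal{R}_0\ge 0$, $\mathcal{Q}_i\le 0$ and $\mathcal{P}_i>0$; the issue is that after splitting, individual factors could momentarily violate these. The resolution is that the tree $T$ is assumed sufficiently subdivided for $n$ points (the Abrams condition of Theorem~\ref{teoa}), so each component $C_{i,\ell_i}$ has room for up to $n$ points; this gives enough slack to choose each $k_j\in\{0,\dots,n\}$ and to park any required surplus of blocked points in a suitable direction at $x_i$ (for instance the direction already carrying a positive $\mathcal{P}_{i,\ell}$), thereby keeping every $p_{i,\ell}\ge 0$, keeping the ``$\exists\,p_{i,\ell}>0$'' clause satisfied for every $i$, and keeping every partial $q$-entry $\le 0$. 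I would verify these inequalities component-by-component using the defining inequalities of the interaction parameters, handle the corner cases (a factor with $r_i=1$ and no $q$-part, a leaf $x_i$ adjacent only to pruned leaves, the root component) separately, and finally invoke Proposition~\ref{interactionproducts} once to conclude that the strong interaction product of the constructed $c_1,\dots,c_m$ equals the prescribed cell $c$.
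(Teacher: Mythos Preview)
Your overall plan---produce factors $c_i=\{k_i|x_i,p_i,q_i\}$ whose interaction parameters recover those of $c$, then invoke Proposition~\ref{interactionproducts}---is the right shape, but the concrete construction you propose does not work, and the paper's construction is both simpler and avoids the feasibility headache entirely.

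The specific gap is your choice $k_j=n$ for $x_j\notin L_{0,1}$. A critical $1$-cell satisfies $k_j+|p_j|+|q_j|=n-1$ with $|p_j|\ge 1$, so $k_j\le n-2$; the value $k_j=n$ is never admissible. More importantly, even ignoring that, each $k_j-n$ appears in exactly one other equation (the one for the direction of the unique $x_i$ lying between $x_j$ and the root), so you cannot fix the $k_j$'s first ``at the root level'' and then independently solve for the $p_{i,\ell},q_{i,\ell}$: the system is coupled along the tree, and your sketch never verifies that a simultaneous nonnegative solution exists. The appeal to Theorem~\ref{teoa} (sufficient subdivision) is a red herring here---that hypothesis guarantees room on the edges of $T$, not that an arbitrary inversion of the interaction formulas lands in the admissible range.

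The paper sidesteps all of this with a canonical, geometric choice (described immediately after the statement): for each $i$, replace every edge $(x_j,y_j)$ with $j\ne i$ by the vertex $x_j$, obtaining a $1$-cell $d_i$, and then let $c_i$ be the unique critical $1$-cell reached from $d_i$ by the Farley--Sabalka gradient flow. Concretely, $p_{i,\ell}$ is just the number of ingredients of $d_i$ lying in $x_i$-direction $\ell$, and $k_i$ is the number lying in direction $0$. Because these numbers are literal counts of points in an actual cell, the constraints $k_i\ge 0$, $p_i>0$, $q_i\ge 0$, $k_i+|p_i|+|q_i|=n-1$ are automatic, and the interaction-parameter identities hold by construction (they were designed precisely so that this collapsing procedure inverts the product of Proposition~\ref{interactionproducts}). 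If you want to salvage your algebraic approach, the cleanest fix is to \emph{define} your $k_i,p_i,q_i$ by this counting recipe and then check the formulas, rather than trying to solve the linear system abstractly.
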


Given a critical $m$-cell $c$ with edge-ingredients $(x_i, y_i)$ for $i = 1, \dots, m$, we can obtain one of its factors as follows: fix $i \in \{1, \dots, m\}$ and let $d_i$ be the cell obtained from $c$ by substituting every edge $(x_j, y_j)$ with the vertex $x_j$ for $j \neq i$. Then $d_i$ is a redundant 1-cell and the factor of $c$ is the unique critical 1-cell $c_i$ such that there exists a gradient path from $d_i$ to $c_i$.

To be specific, the cell $c_i$ is the critical 1-cell (which contains the edge-ingredient $(x_i,y_i)$) $\{k_i| x_i,p_i, q_i\}$ where $p_i =(p_{i,1}, \dots, p_{i,r})$, $q_i= (q_{i,1}, \dots, q_{i,s})$ and $p_{i,j}$ denotes the amount of vertex-ingredients in the cell $d_i$ that lie on $x_i$-direction $j$. Similarly $q_{i,j}$ denotes the amount of vertex-ingredients in the cell $d_i$ that lie on $x_i$-direction $j+r$. 

\

Now we want to see how a weak interaction product looks like. This case is a bit more complicated than the strong interaction product, since the weak interaction product yields, in many cases, a large sum of cells instead of only one cell.
Let $\Pi_1$ stand for a product~(\ref{aproduct}) whose factors interact strongly, so Proposition~\ref{interactionproducts} applies.
 Choose an additional 1-dimensional critical cell $\{k_x| x,(p_{x,1},\ldots,p_{x,r_x}),(q_{x,1},\ldots,q_{x,s_x})\}$  with $x<x_1<\cdots<x_m$ and where the standard conditions and conventions are assumed, namely, 
\begin{equation}\label{namely}
p_x:=(p_{x,1},\ldots,p_{x,r_x})>0 \ \ \mbox{and} \ \  q_x:=(q_{x,1},\ldots,q_{x,s_x})\geq0,
\end{equation}
where $r_x\geq1\leq s_x$, $r_x+s_x=d_x:=d(x)-1$, $|p_x|:=\sum_{\ell=1}^{r_x}p_{x,\ell}$, $|q_x|:=\sum_{\ell=1}^{s_x}q_{x,\ell}$, $k_x+|p_x|+|q_x|=n-1$ and $\overline{x}:=x[r_x+1]$. Consider the interaction parameters $P_{i}:=\mathcal{P}_{i}(x_1,\ldots,x_m)$ and $Q_{i}:=\mathcal{Q}_{i}(x_1,\ldots,x_m)$ of the factors of $\Pi_1$ ($i\in\{1,\ldots, m\}$), as well as the first three interaction parameters $R_0:=\mathcal{R}_0(x,x_1,\ldots,x_m)$, $P_x:=\mathcal{P}_1(x,x_1,\ldots,x_m)$ and $Q_x:=\mathcal{Q}_1(x,x_1,\ldots,x_m)$ of the factors of $\Pi_2:=\{k_x| x,p_x,q_x\}\cdot\Pi_1$.

\begin{teo}\label{productsvialowerpaths} \cite{GH}
In the situation above, if the factors of $\Pi_2$ interact but non-strongly, then
\begin{align}
\Pi_2&=-\sum_{a}\left\{R_0-|a|| x,a,Q_x| x_1, P_{1},Q_{1}|\cdots| x_m, P_{m},Q_{m}\right\}\label{laquefaltaba}\\ 
&\;\;\;\;{}+\sum_{\ell=1}^{s_x-1}\sum_{a,b}\left\{R_0-|a|-b-1| x,Q_x^{(\ell,a,b)},Q_x^{(\ell,+)}| x_1, P_{1},Q_{1}|\cdots| x_m, P_{m},Q_{m}\right\}\label{dosmas}\\
&\;\;\;\;{}-\sum_{\ell=1}^{s_x-1}\sum_{a,b}\left\{R_0-|a|-b| x,Q_x^{(\ell,a,b)},Q_x^{(\ell,-)}| x_1, P_{1},Q_{1}|\cdots| x_m, P_{m},Q_{m}\right\}\label{unomas}.
\end{align}
In the above equation, $a:=(a_1,\ldots,a_{r_x})$, $|a|:=a_1+\cdots+a_{r_x}$, 
$Q_x^{(\ell,+)}:=(Q_{x,\ell+1},\ldots,Q_{x,s_x}), $
$Q_x^{(\ell,-)}:=(Q_{x,\ell+1}-1,Q_{x,\ell+2},\ldots,Q_{x,s_x}) \mbox{ and }
Q_{x}^{(\ell,a,b)}:=(a_1,\ldots,a_{r_x},Q_{x,1}+b+1,Q_{x,2},\ldots,Q_{x,\ell}).$
The summation in~(\ref{laquefaltaba}) runs over all $r_x$-tuples $a$ of non-negative integer numbers satisfying $1\leq|a|\leq R_0$. The inner summation in~(\ref{dosmas}) runs over all $r_x$-tuples $a$ of non-negative integer numbers and all non-negative integer numbers~$b$ satisfying $|a|+b<R_0$. The inner summation in~(\ref{unomas}) is empty if $Q_{x,\ell+1}=0$, otherwise it runs over all $r_x$-tuples $a$ of non-negative integer numbers and all non-negative integer numbers $b$ satisfying $|a|+b\leq R_0$.
\end{teo}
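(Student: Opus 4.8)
\medskip
\noindent\textbf{Proof proposal.} The plan is to evaluate the cup product $\Pi_2$ at the cochain level in Abrams' cubical model, project the outcome to the Morse cochain complex of $U\mathcal D^nT$ (where, by the vanishing of the Morse differential \cite{df}, the duals of the critical cells form a basis), and read off the coefficients as a signed count of lower $W$-paths; almost all of the work is in enumerating those paths near the essential vertex $x$.

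First I would reduce the product to a single, explicit, generally non-critical cell. The interaction machinery of \cite{GH} represents the product of the cocycle duals of two critical cells whose ingredient data amalgamate by the projection onto Morse cochains of the dual of the amalgamated cell, and Proposition~\ref{interactionproducts} is the special case in which that amalgamated cell is already critical. Applying this with $\{k_x\,|\,x,p_x,q_x\}$ and with the critical $m$-cell $\Pi_1=\{R_0'\,|\,x_1,P_1,Q_1\,|\,\cdots\,|\,x_m,P_m,Q_m\}$, $R_0'=\mathcal R_0(x_1,\ldots,x_m)$, and using that $x<x_1<\cdots<x_m$ forces the insertion of $x$ to subdivide only the component of the root (so that $\mathcal P_i(x,x_1,\ldots,x_m)=P_i>0$ for $i\geq1$), one sees that ``interact but non-strongly'' is exactly the condition $P_x=0$, and that $\Pi_2$ is represented by the amalgamated cell $\delta=\{R_0\,|\,x,0,Q_x\,|\,x_1,P_1,Q_1\,|\,\cdots\,|\,x_m,P_m,Q_m\}$. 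Since $P_x=0$, there is no vertex-ingredient of $\delta$ between $x$ and $\overline x$, so $(x,\overline x)$ is order-\emph{respecting} in $\delta$; hence $\delta$ is not critical and must be flowed down.

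Next I would localize the flow and enumerate the paths. Every ingredient of $\delta$ at or beyond $x_1$ is critical (each $(x_i,y_i)$ stays order-disrespecting and each $x_i$-block stays blocked) and sits ``above'' the unique non-critical ingredient $(x,\overline x)$ in the Farley--Sabalka order, so no lower $W$-path out of $\delta$ ever touches it; one may therefore collapse $T$ away from a neighbourhood of $x$ and carry the $x_i$-data along verbatim -- exactly as it appears untouched in \eqref{laquefaltaba}--\eqref{unomas}. In the resulting local model each lower $W$-path repeatedly retracts $(x,\overline x)$ to $\overline x$, lets it absorb one of the $R_0$ vertices stacked from the root, and re-extends it, thereby pushing root-side vertices onto the directions of $x$. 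I would classify these paths by the net redistribution of the $R_0$ root-side vertices: a path that only deposits an $r_x$-tuple $a$ of them (necessarily with $1\leq|a|\leq R_0$) onto the directions $1,\dots,r_x$ terminates at a cell of the form in \eqref{laquefaltaba}; a path that additionally sweeps a block of $b+1$ vertices once around one of the directions $r_x+1,\dots,r_x+s_x$, say the $\ell$-th of them, terminates either at a cell of \eqref{dosmas}, with the block stopping short of the blocking vertex on the next such direction (giving the data $Q_x^{(\ell,a,b)},\,Q_x^{(\ell,+)}$ and the bound $|a|+b<R_0$), or at a cell of \eqref{unomas}, with that vertex swept along too (giving $Q_x^{(\ell,-)}$, the bound $|a|+b\leq R_0$, and a nonempty family only when $Q_{x,\ell+1}\neq0$). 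A separate check then shows that no critical cell outside these three families is reachable.

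Finally I would pin down the multiplicities and signs. Using \eqref{multiplicitydefinition} and multiplicativity along concatenations, each elementary retract/re-extend step contributes $\pm1$; I expect to show that for each listed target critical cell there is exactly one contributing lower $W$-path, that its total multiplicity is constant along each family, and that after incorporating the fixed sign of the cochain cup product these are $-1$, $+1$, $-1$ for \eqref{laquefaltaba}, \eqref{dosmas}, \eqref{unomas} respectively, so that no cancellation occurs and every coefficient is the claimed $\pm1$. The main obstacle is precisely this last combined step -- proving the path list is exhaustive and non-redundant and that the alternating sign pattern comes out right -- which is a careful but elementary analysis of the Farley--Sabalka field around an essential vertex and is where essentially all the effort of the proof lies.
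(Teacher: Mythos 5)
First, a caveat: the paper contains no proof of Theorem~\ref{productsvialowerpaths} --- it is quoted from \cite{GH} --- so there is no in-text argument to compare yours against line by line. That said, your overall strategy is the one the theorem's label advertises and the one used in \cite{GH}: amalgamate the factors into the single cube $\delta=\{R_0\,|\,x,0,Q_x\,|\,x_1,P_1,Q_1\,|\cdots|\,x_m,P_m,Q_m\}$, observe that $P_x=0$ makes $(x,\overline{x})$ order-respecting so that $\delta$ is not critical, and express its class in the Morse complex by a signed count of lower $W$-paths. Your preliminary reductions are also correct: since $x<x_1<\cdots<x_m$, inserting $x$ only splits the root component, so $\mathcal{P}_i$ and $\mathcal{Q}_i$ are unchanged for $i\geq1$ and ``interact but non-strongly'' is exactly $P_x=0$.

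The gap is that the two steps carrying all of the content are asserted rather than carried out. (i) The claim that no lower $W$-path out of $\delta$ ``ever touches'' the $x_i$-data does not follow merely from those ingredients being critical in $\delta$: a lower path may begin by exchanging the edge $(x_i,y_i)$ for its endpoint $y_i$, which unblocks the vertices stacked in the low directions at $x_i$ and lets the field insert a new edge $(x_i,z)$ there; one must show that every such branch dies before reaching a critical cell, or that its contributions cancel. This localization is a genuine lemma, not an observation. (ii) The enumeration of the surviving paths, the precise ranges $1\leq|a|\leq R_0$, $|a|+b<R_0$ and $|a|+b\leq R_0$, the claim that each target critical cell is reached by exactly one path, and the signs $-,+,-$ of the three families \emph{are} the statement being proved; writing ``I expect to show'' for these leaves the actual derivation undone, and without the multiplicity computation one cannot even exclude cancellation between families \eqref{dosmas} and \eqref{unomas}, whose cells can coincide for different parameter values. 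As written, your text is a correct and well-oriented plan of the proof in \cite{GH}, but not yet a proof.
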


\begin{ejem}\label{ej6}
Let $x$ be a vertex of degree five, in a tree $T$ and assume $y$ is a vertex of degree three that lies on $x$-direction 1. Take $n=9$, then $\{1|x,(2,0),(2,3)\} \smile \{7|y,(1),(0)\}= \{0|x,(0,1),(2,3)|y,(1),(0)\} + \{0|x,(1,0),(2,3)|y,(1),(0)\} + \{0|x,(0,0,3),(3)|y,(1),(0)\} - \{1|x,(0,0,3),(2)|y,(1),(0)\}-\{0|x,(0,1,3),(2)|y,(1),(0)\}-\{0|x,(1,0,3),(2)|y,(1),(0)\}$.
We can visiualize these six cells in Figure \ref{6}; the three cells of the top row have positive sign and the three cells of the bottom row have negatve sign.
\end{ejem}
\begin{figure}
\centering
\begin{tikzpicture}
\node[circle, draw, scale=.4] (1z) at (0,5){};
\node[circle, draw, scale=.4] (2z) at (0,6.5){};
\node[circle, draw, scale=.4] (3z) at (-2,8){};
\node[circle, draw, scale=.4, fill=black] (4z) at (-2.5, 8.5){};
\node[circle, draw, scale=.4] (5z) at (-1.5,8.5){};
\node (6z) at (-1,8){};
\node[circle, draw, scale=.4, fill=black] (7z) at (0,7.1){};
\node[circle, draw, scale=.4, fill=black] (8z) at (0,7.6){};
\node[circle, draw, scale=.4, fill=black] (9z) at (0,8.2){};
\node[circle, draw, scale=.4] (10z) at (0.4, 7){};
\node[circle, draw, scale=.4, fill=black] (11z) at (0.8,7.4){};
\node[circle, draw, scale=.4, fill=black] (12z) at (1.2,7.8){};
\node[circle, draw, scale=.4, fill=black] (13z) at (1.6,8.2){};

\draw (12z)--(13z);
\draw (11z)--(12z);
\draw (11z)--(10z);
\draw[very thick] (2z)--(10z);
\draw[dashed] (1z) -- (2z);
\draw[dashed] (2z)--(3z);
\draw[very thick] (3z)--(5z);
\draw (4z)--(3z);
\draw[dashed] (2z)--(6z);
\draw (2z)--(7z);
\draw (7z)--(8z);
\draw (8z)--(9z);

\node[circle, draw, scale=.4] (1xz) at (5,5){};
\node[circle, draw, scale=.4] (2xz) at (5,6.5){};
\node[circle, draw, scale=.4] (3xz) at (3,8){};
\node[circle, draw, scale=.4, fill=black] (4xz) at (2.5, 8.5){};
\node[circle, draw, scale=.4] (5xz) at (3.5,8.5){};
\node[circle, draw, scale=.4, fill=black] (6xz) at (4.5,7.5){};
\node[circle, draw, scale=.4] (7xz) at (5,7.1){};
\node[circle, draw, scale=.4, fill=black] (8xz) at (5,7.6){};
\node[circle, draw, scale=.4, fill=black] (9xz) at (5,8.2){};
\node[circle, draw, scale=.4, fill=black] (10xz) at (5.4, 7){};
\node[circle, draw, scale=.4, fill=black] (11xz) at (5.8,7.4){};
\node[circle, draw, scale=.4, fill=black] (12xz) at (6.2,7.8){};

\draw (11xz)--(12xz);
\draw (11xz)--(10xz);
\draw (2xz)--(10xz);
\draw[dashed] (1xz) -- (2xz);
\draw[dashed] (2xz)--(3xz);
\draw[very thick] (3xz)--(5xz);
\draw (4xz)--(3xz);
\draw (2xz)--(6xz);
\draw[very thick] (2xz)--(7xz);
\draw (7xz)--(8xz);
\draw (8xz)--(9xz);

\node[circle, draw, scale=.4] (1yz) at (10,5){};
\node[circle, draw, scale=.4] (2yz) at (10,6.5){};
\node[circle, draw, scale=.4] (3yz) at (8,8){};
\node[circle, draw, scale=.4, fill=black] (4yz) at (7.5, 8.5){};
\node[circle, draw, scale=.4] (5yz) at (8.5,8.5){};
\node (6yz) at (9,8){};
\node[circle, draw, scale=.4] (7yz) at (10,7.1){};
\node[circle, draw, scale=.4, fill=black] (8yz) at (10,7.6){};
\node[circle, draw, scale=.4, fill=black] (9yz) at (10,8.2){};
\node[circle, draw, scale=.4, fill=black] (10yz) at (10.4, 7){};
\node[circle, draw, scale=.4, fill=black] (11yz) at (10.8,7.4){};
\node[circle, draw, scale=.4, fill=black] (12yz) at (11.2,7.8){};
\node[circle, draw, scale=.4, fill=black] (13yz) at (9.25, 7){};

\draw (2yz)--(13yz);
\draw (11yz)--(12yz);
\draw (11yz)--(10yz);
\draw (2yz)--(10yz);
\draw[dashed] (1yz) -- (2yz);
\draw[dashed] (3yz)--(13yz);
\draw[very thick] (3yz)--(5yz);
\draw (4yz)--(3yz);
\draw[dashed] (2yz)--(6yz);
\draw[very thick] (2yz)--(7yz);
\draw (7yz)--(8yz);
\draw (8yz)--(9yz);

\node[circle, draw, scale=.4, fill=black] (1) at (0,0){};
\node[circle, draw, scale=.4] (2) at (0,1.5){};
\node[circle, draw, scale=.4] (3) at (-2,3){};
\node[circle, draw, scale=.4, fill=black] (4) at (-2.5, 3.5){};
\node[circle, draw, scale=.4] (5) at (-1.5,3.5){};
\node (6) at (-1,3){};
\node[circle, draw, scale=.4, fill=black] (7) at (0,2.1){};
\node[circle, draw, scale=.4, fill=black] (8) at (0,2.6){};
\node[circle, draw, scale=.4, fill=black] (9) at (0,3.2){};
\node[circle, draw, scale=.4] (10) at (0.4, 2){};
\node[circle, draw, scale=.4, fill=black] (11) at (0.8,2.4){};
\node[circle, draw, scale=.4, fill=black] (12) at (1.2,2.8){};

\draw (11)--(12);
\draw (11)--(10);
\draw[very thick] (2)--(10);
\draw[dashed] (1) -- (2);
\draw[dashed] (2)--(3);
\draw[very thick] (3)--(5);
\draw (4)--(3);
\draw[dashed] (2)--(6);
\draw (2)--(7);
\draw (7)--(8);
\draw (8)--(9);

\node[circle, draw, scale=.4] (1x) at (5,0){};
\node[circle, draw, scale=.4] (2x) at (5,1.5){};
\node[circle, draw, scale=.4] (3x) at (3,3){};
\node[circle, draw, scale=.4, fill=black] (4x) at (2.5, 3.5){};
\node[circle, draw, scale=.4] (5x) at (3.5,3.5){};
\node[circle, draw, scale=.4, fill=black] (6x) at (4.5,2.5){};
\node[circle, draw, scale=.4, fill=black] (7x) at (5,2.1){};
\node[circle, draw, scale=.4, fill=black] (8x) at (5,2.6){};
\node[circle, draw, scale=.4, fill=black] (9x) at (5,3.2){};
\node[circle, draw, scale=.4] (10x) at (5.4, 2){};
\node[circle, draw, scale=.4, fill=black] (11x) at (5.8,2.4){};
\node[circle, draw, scale=.4, fill=black] (12x) at (6.2,2.8){};

\draw (11x)--(12x);
\draw (11x)--(10x);
\draw[very thick] (2x)--(10x);
\draw[dashed] (1x) -- (2x);
\draw[dashed] (2x)--(3x);
\draw[very thick] (3x)--(5x);
\draw (4x)--(3x);
\draw (2x)--(6x);
\draw (2x)--(7x);
\draw (7x)--(8x);
\draw (8x)--(9x);

\node[circle, draw, scale=.4] (1y) at (10,0){};
\node[circle, draw, scale=.4] (2y) at (10,1.5){};
\node[circle, draw, scale=.4] (3y) at (8,3){};
\node[circle, draw, scale=.4, fill=black] (4y) at (7.5, 3.5){};
\node[circle, draw, scale=.4] (5y) at (8.5,3.5){};
\node (6y) at (9,3){};
\node[circle, draw, scale=.4, fill=black] (7y) at (10,2.1){};
\node[circle, draw, scale=.4, fill=black] (8y) at (10,2.6){};
\node[circle, draw, scale=.4, fill=black] (9y) at (10,3.2){};
\node[circle, draw, scale=.4] (10y) at (10.4, 2){};
\node[circle, draw, scale=.4, fill=black] (11y) at (10.8,2.4){};
\node[circle, draw, scale=.4, fill=black] (12y) at (11.2,2.8){};
\node[circle, draw, scale=.4, fill=black] (13y) at (9.25, 2){};

\draw (2y)--(13y);
\draw (11y)--(12y);
\draw (11y)--(10y);
\draw[very thick] (2y)--(10y);
\draw[dashed] (1y) -- (2y);
\draw[dashed] (3y)--(13y);
\draw[very thick] (3y)--(5y);
\draw (4y)--(3y);
\draw[dashed] (2y)--(6y);
\draw (2y)--(7y);
\draw (7y)--(8y);
\draw (8y)--(9y);
\end{tikzpicture}
\caption{The six 2-cells that appear in the product of Example \ref{ej6}.}\label{6}
\end{figure}
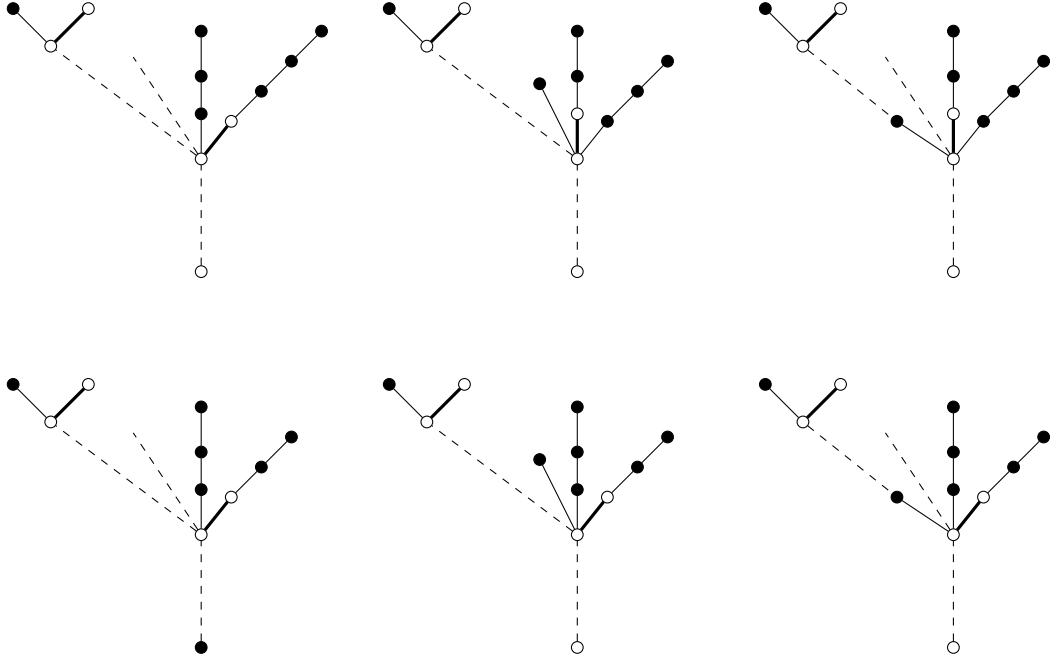
 
 The following corollary will be particularly useful in the next section.
\begin{coro}\label{remark}
Assume we have a product of cells $c_0 \smile \dots \smile c_m$ as in Theorem \ref{productsvialowerpaths} and let $c$ be a cell which appears as a summand of the product and let $d$ be the edge-ingredient of $c_0$ Then if $e$ is an edge-ingredient of $c$ and is not an edge-ingredient of $c_i$ for $0 \leq i \leq m$ then $\iota (e) = \iota (d) $ and $\tau (e)> \tau (d).$
\end{coro}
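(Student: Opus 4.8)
### Proof proposal for Corollary~\ref{remark}

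The plan is to read off the structure of each summand appearing in Theorem~\ref{productsvialowerpaths} and track exactly which edge-ingredients can occur that were not already edge-ingredients of one of the factors $c_0, \dots, c_m$. First I would set up notation consistent with the theorem: write $c_0 = \{k_x | x, p_x, q_x\}$ for the ``extra'' $1$-cell with edge-ingredient $d = (x, \overline{x})$ where $\overline{x} = x[r_x+1]$, and write $\Pi_1 = c_1 \smile \cdots \smile c_m$ for the strongly interacting product, so that $c_0 \smile \cdots \smile c_m = \Pi_2 = c_0 \cdot \Pi_1$. By Theorem~\ref{fact}/Proposition~\ref{interactionproducts}, every edge-ingredient of $\Pi_1$ (when it is a single cell) or of each $c_i$ is of the form $(x_i, y_i)$, and these are precisely the edge-ingredients common to all the summands $c$ in formulas (\ref{laquefaltaba})--(\ref{unomas}). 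So the only edge-ingredient of $c$ that can fail to be an edge-ingredient of some $c_i$ is the one ``attached to $x$'', i.e.\ the edge-ingredient coming from the first block $\{R_0 - \cdots | x, \ast, \ast\}$ of each summand.

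The key step is then to identify that edge-ingredient explicitly in each of the three sums. In the notation $\{k | x, p, q\}$ from the paragraph before Definition (of interaction parameters), the edge-ingredient of the block $\{k|x,p,q\}$ is the edge $(x, x[r+1])$ where $r$ is the length of the tuple $p$ — that is, the edge in $x$-direction $r+1$, where $r$ counts how many ``positive'' directions are recorded. In sum (\ref{laquefaltaba}) the first block is $\{R_0 - |a| | x, a, Q_x\}$ with $a = (a_1, \dots, a_{r_x})$ of length $r_x$, so its edge-ingredient is $(x, x[r_x+1]) = (x, \overline{x}) = d$ itself; here there is nothing to prove since the edge-ingredient equals $d$, hence \emph{is} an edge-ingredient of $c_0$. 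In sums (\ref{dosmas}) and (\ref{unomas}) the first block has $p$-tuple $Q_x^{(\ell,a,b)} = (a_1, \dots, a_{r_x}, Q_{x,1}+b+1, Q_{x,2}, \dots, Q_{x,\ell})$, which has length $r_x + \ell$ with $1 \le \ell \le s_x - 1$; so its edge-ingredient is the edge in $x$-direction $r_x + \ell + 1$, call it $e = (x, x[r_x+\ell+1])$. Thus $\iota(e) = x = \iota(d)$, and since $x[r_x+\ell+1]$ lies strictly past $x[r_x+1] = \overline{x}$ in the vertex ordering transferred to edges (moving to a later $x$-direction produces a strictly larger edge ordinal, as $\ell \ge 1$), we get $\tau(e) > \tau(d)$. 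This $e$ is not an edge-ingredient of $c_0$ (it is in a different $x$-direction) nor of any $c_i$ (those are $(x_i, y_i)$ with $x_i > x$), so it is exactly the sort of $e$ the Corollary concerns, and the conclusion $\iota(e) = \iota(d)$, $\tau(e) > \tau(d)$ holds.

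The main obstacle I anticipate is bookkeeping rather than conceptual depth: one must be careful that (i) the vertices recorded in the $q$-part of a block never contribute edge-ingredients — only the transition from the $p$-tuple to the $q$-tuple marks where the single edge of a critical $1$-cell sits, and this carries over to each block of a critical $m$-cell — and (ii) that no summand secretly introduces an edge-ingredient elsewhere, e.g.\ hidden in the blocks $\{x_i, P_i, Q_i\}$; but those blocks are identical across all summands and identical to the corresponding data in $\Pi_1$, whose edge-ingredients are the $(x_i, y_i)$ by Proposition~\ref{interactionproducts}. Once these two points are nailed down, the argument is just the direction-count computation above: any "new" edge-ingredient $e$ of a summand $c$ must be the $x$-block edge of a term in (\ref{dosmas}) or (\ref{unomas}), forcing $\iota(e) = x = \iota(d)$ and $\tau(e) = x[r_x + \ell + 1] > x[r_x+1] = \tau(d)$.
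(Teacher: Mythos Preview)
Your proposal is correct and follows essentially the same route as the paper's proof: both identify that any ``new'' edge-ingredient must live in the $x$-block of a summand from~(\ref{dosmas}) or~(\ref{unomas}), and both conclude $\tau(e)>\tau(d)$ by comparing tuple lengths---you count the length $r_x+\ell$ of the $p$-tuple $Q_x^{(\ell,a,b)}$, while the paper equivalently observes that the new $q$-tuple has length $s_x-\ell<s_x$. Your write-up is in fact more explicit about why the summands in~(\ref{laquefaltaba}) contribute nothing new and why the blocks $\{x_i,P_i,Q_i\}$ cannot hide an extra edge, points the paper leaves implicit.
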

\begin{proof}
 By Theorem \ref{productsvialowerpaths}, since $c$ belongs to one of the three summands, and has $e$ as an edge-ingredient, it must have one of the following forms:
\begin{itemize} 
\item $\{R_0-|a|-b-1 |x,Q_x^{(\ell,a,b)},(Q_{x,\ell+1},\ldots,Q_{x,s_x})|\dots \}$
\item $\{R_0-|a|-b| x,Q_x^{(\ell,a,b)},(Q_{x,\ell+1}-1,Q_{x,\ell+2},\ldots,Q_{x,s_x})|\dots\}$
\end{itemize}
where $b$ is a non negative integer,
$a:=(a_1,\ldots,a_{r_x})$, $|a|:=a_1+\cdots+a_{r_x}$, and $
Q_{x}^{(\ell,a,b)}:=(a_1,\ldots,a_{r_x},Q_{x,1}+b+1,Q_{x,2},\ldots,Q_{x,\ell}).$
It is clear that $\iota (e) = \iota (d)$. Moreover, since the length of $Q_x$ is greater than the length of $(Q_{x,\ell+1},\ldots,Q_{x,s_x})$ and $(Q_{x,\ell+1}-1,Q_{x,\ell+2},\ldots,Q_{x,s_x})$ we must have that $\tau (e)> \tau (d).$
\end{proof}

\subsection{The simplicial complex $K_nT$}

In this section $T$ will be a binary tree, this is, a tree where every vertex has degree one, two or three.
Binary trees have a particularly nice cohomology ring.

 Let $$\langle k|x,p,q  \rangle = \sum_{i=1}^k \{k-i,x,p+i,q \},$$ notice that since $T$ is a binary tree, $p$ and $q$ are vectors of length one, thus we can see them as non negative integers.
\begin{defi}
Given a tree $T$, and an integer $n \geq 4$, let $K_nT$ denote the simplicial complex defined as follows:
\begin{itemize}
\item The vertices of $K_nT$ are the elements $\langle k|x,p,q \rangle$.
\item A set of $m+1$ vertices $\langle k_1|x_1,p_1,q_1 \rangle, \dots, \langle k_m|x_m,p_m,q_m \rangle$ is an $m$-simplex if the cells $\{ k_1|x_1,p_1,q_1 \}, \dots, \{ k_m|x_m,p_m,q_m \}$ interact strongly.
\end{itemize}
\end{defi}

\begin{ejem}
Consider the tree $T$ depicted in Figure \ref{K} (left) and assume it is sufficiently subdivided for $n=4$. Then  $K_4T$ is the simplicial complex obtained from the simplicial complex depicted in Figure \ref{K} (right) by adding  twelve isolated vertices. In $K_4T$ the three vertices of degree one are $\langle 2| v_2, (1,0) \rangle, \langle 0| v_2, (3,0) \rangle$ and $\langle 0| v_2, (1,2) \rangle$ and the three vertices of degree one are $\langle 2| v_3, (1,0) \rangle, \langle 2| v_4, (1,0) \rangle$ and $\langle 0| v_3, (3,0) \rangle$.
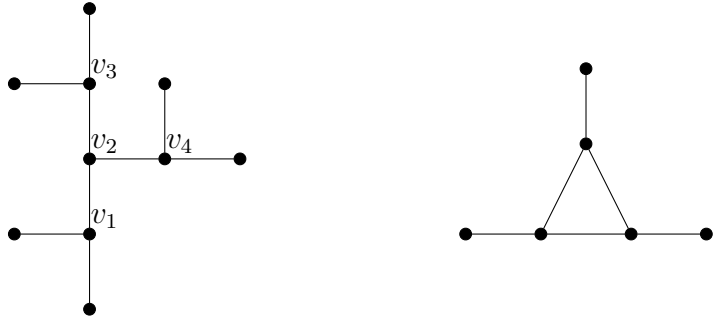
\begin{figure}
\centering
\begin{tikzpicture}
\node[circle, draw, scale=.4, fill=black] (*) at (0,0){};
\node[circle, draw, scale=.4, fill=black] (1) at (0,1){};
\node (10) at (0.2,1.2){$v_1$};
\node[circle, draw, scale=.4, fill=black] (2) at (-1,1){};
\node[circle, draw, scale=.4, fill=black] (3) at (0,2){};
\node (31) at (0.2,2.2){$v_2$};
\node[circle, draw, scale=.4, fill=black] (4) at (1,2){};
\node (41) at (1.2,2.2){$v_4$};
\node[circle, draw, scale=.4, fill=black] (5) at (2,2){};
\node[circle, draw, scale=.4, fill=black] (6) at (0,3){};
\node (60) at (0.2,3.2){$v_3$};
\node[circle, draw, scale=.4, fill=black] (7) at (1,3){};
\node[circle, draw, scale=.4, fill=black] (8) at (-1, 3){};
\node[circle, draw, scale=.4, fill=black] (9) at (0,4){};
\draw (3) to (6);
\draw (4) to (7);
\draw (*) to (1);
\draw (1) to (2);
\draw (1) to (3);
\draw (3) to (4);
\draw (4) to (5);
\draw (6) to (8);
\draw (6) to (9);

\node[circle, draw, scale=.4, fill=black] (a) at (5,1){};
\node[circle, draw, scale=.4, fill=black] (b) at (6,1){};
\node[circle, draw, scale=.4, fill=black] (c) at (7.2,1){};
\node[circle, draw, scale=.4, fill=black] (d) at (8.2,1){};
\node[circle, draw, scale=.4, fill=black] (e) at (6.6, 2.2){};
\node[circle, draw, scale=.4, fill=black] (f) at (6.6, 3.2){};
\draw (a) -- (b);
\draw (b) -- (c);
\draw (c) -- (d);
\draw (e) -- (f);
\draw (e) -- (b);
\draw (e) -- (c);
\end{tikzpicture}
\caption{The tree $T$ and part of the simplicial complex $K_4T$.}\label{K}
\end{figure}
\end{ejem}

In \cite{GH} we proved a slightly more general version of the following theorem.
\begin{teo}\label{pr} \cite{GH}
Assume $T$ is a binary tree. For a commutative ring $R$ with 1, the cohomology ring $H^{\ast}(B_nT;R)$
is the exterior face ring $\Lambda_ R(K_nT)$ determined by the simplicial complex $K_nT$ . Explicitly, $H^{\ast}(B_n T ; R)$ is the quotient $\Lambda/I$, where $\Lambda$ is the exterior graded $R$-algebra generated by the vertex set of $K_n T$, and $I$ is the ideal generated by monomials corresponding to non-faces of $K_nT.$
\end{teo}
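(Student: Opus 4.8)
The statement to be proven is Theorem~\ref{pr}, which identifies $H^{\ast}(B_nT;R)$ with the exterior face ring $\Lambda_R(K_nT)$. The plan is to build the isomorphism on generators and then check it is well-defined, multiplicative, and bijective, using the cup-product machinery developed earlier in this section. First I would fix the additive identification: by Theorem~\ref{df} (the Morse differential vanishes), a graded $R$-basis of $H^m(U\mathcal{D}^nT)\cong H^m(B_nT;R)$ is given by the duals of critical $m$-cells, which by Theorem~\ref{fact} are exactly the strong interaction products of $m$ critical $1$-cells. Since $T$ is binary, each critical $1$-cell has the shape $\{k|x,p,q\}$ with $p,q$ scalars and $p>0$; grouping these into the classes $\langle k|x,p,q\rangle=\sum_{i=1}^{k}\{k-i,x,p+i,q\}$ gives precisely the vertex set of $K_nT$. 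So I would define $\varphi\colon\Lambda\to H^{\ast}(B_nT;R)$ on generators by sending the generator of $K_nT$ labelled $\langle k|x,p,q\rangle$ to the cohomology class of the dual of the critical $1$-cell it represents (more precisely, to the class $[\langle k|x,p,q\rangle]$, i.e.\ the appropriate sum of duals of critical $1$-cells); the point of working with $\langle\cdot\rangle$ rather than $\{\cdot\}$ is that, as shown in \cite{GH}, the $\langle\cdot\rangle$-classes are the ones whose products behave like an exterior algebra.

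The next step is to show $\varphi$ factors through $\Lambda/I$, i.e.\ that a product of generators corresponding to a non-face of $K_nT$ maps to zero. By definition of $K_nT$, a set $\langle k_1|x_1,p_1,q_1\rangle,\dots,\langle k_m|x_m,p_m,q_m\rangle$ is a non-face iff the corresponding critical $1$-cells $\{k_i|x_i,p_i,q_i\}$ do \emph{not} interact strongly. I would argue that in that case the cup product of the associated cohomology classes vanishes: when the factors do not interact at all, the product is zero by the results of \cite{GH}; when they interact weakly but not strongly, Theorem~\ref{productsvialowerpaths} expresses the product as a signed sum of critical cells, and one checks (this is where the $\langle\cdot\rangle$-grouping pays off, cf.\ the telescoping that defines $\langle k|x,p,q\rangle$) that the corresponding sum of $\langle\cdot\rangle$-products telescopes to zero. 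Conversely, when the factors \emph{do} interact strongly, Proposition~\ref{interactionproducts} together with Theorem~\ref{fact} shows that the product of the duals equals (up to sign) the dual of the critical $m$-cell $\{\mathcal{R}_0|x_1,\mathcal{P}_1,\mathcal{Q}_1|\cdots|x_m,\mathcal{P}_m,\mathcal{Q}_m\}$, which is a basis element; in particular the product is nonzero. This simultaneously shows that $\varphi$ kills $I$ and that the induced map $\overline{\varphi}\colon\Lambda/I\to H^{\ast}(B_nT;R)$ sends the monomial basis of $\Lambda/I$ (monomials over faces of $K_nT$) to $\pm$ basis elements of $H^{\ast}(B_nT;R)$. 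Since $\Lambda/I$ has an $R$-basis indexed by faces of $K_nT$ (the standard basis of an exterior face ring) and $H^{\ast}(B_nT;R)$ has an $R$-basis indexed by critical cells $=$ strongly interacting tuples of critical $1$-cells $=$ faces of $K_nT$, the map $\overline{\varphi}$ carries a basis bijectively to a basis, hence is an $R$-algebra isomorphism.

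A few points need care. One must verify that $\varphi$ is an algebra map on the nose, i.e.\ that the exterior (graded-commutative) relations among the generators are respected — this amounts to checking signs in the cup product of two $1$-dimensional classes and the Koszul sign rule, which follows from the description of the Morse cohomology product in \cite{GH} and the fact that interchanging two edge-ingredients of a critical cell introduces the expected sign. One must also confirm the bookkeeping that the vertices of $K_nT$, as defined, are in bijection with the $\langle\cdot\rangle$-classes, and that distinct faces of $K_nT$ give $R$-linearly independent products (immediate from the basis statement above). The main obstacle is the weakly-but-not-strongly interacting case: showing that the messy signed sum in~(\ref{laquefaltaba})--(\ref{unomas}) of Theorem~\ref{productsvialowerpaths}, once re-expressed in terms of the $\langle\cdot\rangle$-generators, collapses to $0$ — this is the heart of why the relations of $K_nT$ are \emph{exactly} the annihilating products, and it is the computation that the $\langle k|x,p,q\rangle=\sum_{i}\{k-i,x,p+i,q\}$ definition is engineered to make work. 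Everything else is a matter of matching two explicitly described $R$-bases.
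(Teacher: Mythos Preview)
The paper does not prove Theorem~\ref{pr}: it is quoted verbatim from \cite{GH} (``In \cite{GH} we proved a slightly more general version of the following theorem''), and no argument is given here. So there is no ``paper's own proof'' to compare your proposal against; your task was effectively to reconstruct a proof that lives entirely in the cited reference.

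As for the sketch itself, the broad strategy (additive basis from the vanishing Morse differential, change to the $\langle\cdot\rangle$-basis, match face-monomials with critical cells) is the right shape, but one step is underspecified in a way that matters. When you say that for a face of $K_nT$ the product of the $\langle k_i|x_i,p_i,q_i\rangle$ equals, up to sign, the dual of a single critical $m$-cell via Proposition~\ref{interactionproducts}, that proposition only computes products of the $\{\cdot\}$-cells, not of the $\langle\cdot\rangle$-sums. Expanding $\prod_i\langle k_i|x_i,p_i,q_i\rangle$ yields many cross-terms $\prod_i\{k_i-j_i|x_i,p_i+j_i,q_i\}$, and these individual tuples need not interact strongly (or at all) even when the ``leading'' tuple does; so the product is a priori a large signed sum of critical $m$-cells, not a single basis element. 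The actual content of the theorem in \cite{GH} is precisely that, after the change of basis from $\{\cdot\}$ to $\langle\cdot\rangle$ in \emph{every} degree, these sums collapse to the expected monomial-times-basis-element pattern. You correctly flag the weak-interaction collapse as ``the main obstacle,'' but the analogous collapse on the \emph{face} side is just as essential and is not covered by invoking Proposition~\ref{interactionproducts} alone. In short: your outline is plausible and identifies the key difficulty, but the bijection ``face-monomials $\leftrightarrow$ basis elements'' requires a computation you have not sketched, and the paper offers no help since it defers the whole proof to \cite{GH}.
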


\section{The s-topological complexity of $U\mathcal{D}^nT$}
In this section we are going to give conditions on $T$, depending on the value of $n$, which assure that the higher topological complexity of $U \mathcal{D}^nT$ is maximal. These conditions are related to the structure of the tree, and the relation between the number of vertices of degree tree and the number of vertices of degree higher than three.

\

Given a $1$-cell $a$, denote by $E(a)$ its unique edge-ingredient. The following proposition is a bit technical but it will be very helpful later on.

\begin{prop}\label{Gb}
Assume $a_1, \dots a_s$ are critical $m$-cells in $U\mathcal{D}^nT$ with no common edge-ingredient, and let $a_i = a_i^1 \smile \dots \smile a_i^m$ for $1\leq i \leq s$. Consider $b=\{ b_1, b_2 ,\dots,  b_s\}$, where each $b_i = \prod_{k=1}^s (\prod_{j \in I_{i,k}} a_i^j),$ and for each 
  $i \in \{ 1, \dots, s\}$, $\cup_{k=1}^s I_{i,k} = \{1,2, \dots, m\}$ and $ I_{1,j} \cap I_{i,k} = \emptyset$ when $j \neq k$. Then $a_i$ does not appear as a summand of $b_i$.
\end{prop}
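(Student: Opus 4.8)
The plan is to argue by contradiction and to control everything through the edge-ingredients of the cells involved. Fix $i$ and suppose that $a_i$ occurs with non-zero coefficient when $b_i$ is expanded in the basis of critical cells (this basis exists because the Morse differential vanishes). Since $a_i$ is an $m$-cell this already forces $b_i$ to have degree $m$, so the sets $I_{i,1},\dots,I_{i,s}$ must be pairwise disjoint and $b_i$ is, up to sign, a cup product $c_1\smile\cdots\smile c_m$ of $m$ critical $1$-cells, each $c_\ell$ being one of the one-cells arising in the factorizations of $a_1,\dots,a_s$. Recall also that by Theorem~\ref{fact} the factorization $a_i=a_i^1\smile\cdots\smile a_i^m$ is the unique way of writing $a_i$ as a strong interaction product of $m$ critical $1$-cells, the factor $a_i^j$ carrying the $j$-th smallest edge-ingredient of $a_i$.

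The technical core is an ``edges only grow, and never move to a new vertex'' principle, resting on Corollary~\ref{remark}. I would build the product $c_1\smile\cdots\smile c_m$ by attaching the factors one at a time in decreasing order of their essential vertices, expanding term by term at each stage. Each stage multiplies the critical $1$-cell based at the current smallest essential vertex $v$ by a single critical cell supported only at vertices larger than $v$; this is governed by Proposition~\ref{interactionproducts} (strong interaction, all edge-ingredients preserved), by Theorem~\ref{productsvialowerpaths} (weak but not strong), or yields $0$. In the weak non-strong case Corollary~\ref{remark} shows that the only edge-ingredient that can change is the one at $v$, and it is replaced by an edge with the same initial vertex $v$ and a strictly larger terminal vertex. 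Two consequences follow by induction: every critical cell appearing in $b_i$ has its $m$ edge-ingredients based at exactly the $m$ essential vertices carrying the edge-ingredients of $c_1,\dots,c_m$ --- so these are pairwise distinct and must coincide with the essential vertices $x_1<\cdots<x_m$ of $a_i$ --- and, after renaming so that $c_\ell$ is based at $x_\ell$, the edge-ingredient of $c_\ell$ is $(x_\ell,w_\ell)$ with $w_\ell$ in an $x_\ell$-direction no larger than that of the edge-ingredient $(x_\ell,\overline{x_\ell})$ of $a_i$.

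The remaining step is combinatorial. Each $c_\ell$ is one of the one-cells $a_k^j$, so $(x_\ell,w_\ell)$ is an edge-ingredient of the cell $a_k$ that furnishes $c_\ell$; hence $x_\ell$ is an essential vertex of $a_k$ and the edge of $a_k$ at $x_\ell$ is no longer than that of $a_i$ there. Combining these constraints with the two hypotheses --- that $a_1,\dots,a_s$ have no common edge-ingredient, and the stated conditions on the sets $I_{i,k}$ --- I would show that the only way to match the data of $a_i$ at all $m$ of its vertices at once is to have $\{c_1,\dots,c_m\}=\{a_i^1,\dots,a_i^m\}$, i.e.\ $b_i=\pm a_i$; but that is exactly the situation the choice of the index sets is designed to exclude, and this contradiction proves the proposition.

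The main obstacle is precisely this last implication, and it is where the standing hypotheses, as opposed to formal bookkeeping, are used. Two points must be handled. First, a factor $c_\ell$ coming from $a_k$ with $k\neq i$ might carry exactly the edge-ingredient $(x_\ell,\overline{x_\ell})$ of $a_i$ but a different vertex-distribution $(k,p,q)$; one has to check, by inspecting the term-by-term descriptions in Proposition~\ref{interactionproducts} and Theorem~\ref{productsvialowerpaths}, that such a mismatch of distributions really does prevent $a_i$ from appearing. Second, the hypothesis ``no common edge-ingredient'' has to be propagated correctly: since the term $b$ is only relevant when $a_{i'}$ occurs in $b_{i'}$ for \emph{every} $i'$, one uses the non-existence of an edge common to all $a_{i'}$ simultaneously across all slots, together with the disjointness/covering properties of the $I_{i,k}$, to rule out every non-diagonal distribution.
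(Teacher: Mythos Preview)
Your approach shares its technical core with the paper's: both rest on the ``edges only grow'' principle extracted from Corollary~\ref{remark}, and both work under the assumption (for contradiction) that $a_{i'}$ appears in $b_{i'}$ for every $i'$. Where the argument breaks down is in the closing combinatorial step, which you yourself flag as ``the main obstacle.'' Your plan is to show that the only way $a_i$ can occur in $b_i=c_1\smile\cdots\smile c_m$ is for $\{c_1,\dots,c_m\}=\{a_i^1,\dots,a_i^m\}$, and then to note that this diagonal case is excluded. But that implication is not true and cannot be rescued by the distribution-matching checks you outline: a factor $c_\ell$ coming from some $a_k$ with $k\neq i$ may carry a strictly shorter edge at $x_\ell$ and still, via a weak interaction, contribute a summand with precisely the edge of $a_i$ at $x_\ell$; and even when the edges agree, a mismatch in the vertex-distribution of $c_\ell$ alone does not prevent $a_i$ from appearing in the product. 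Your ``two points to handle'' are therefore not side technicalities but the whole remaining content of the proof, and you do not resolve them.

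The paper closes this gap with a different device: it organizes the same inequalities into a directed graph $G_b$ on the pairs $(i,j)$, placing an oriented edge from $(i,j)$ to $(k,l)$ whenever $\iota(E(a_i^j))=\iota(E(a_k^l))$ and $\tau(E(a_i^j))>\tau(E(a_k^l))$. The ``edges grow'' principle is then applied twice at each non-isolated vertex $(i,j)$ --- once in the slot $b_k$ that received $a_i^j$, to produce an incoming edge, and once in the slot $b_i$ that $a_i^j$ left, to produce an outgoing edge --- giving every non-isolated vertex in- and out-degree at least one. Hence $G_b$ contains a directed cycle, and the strict decrease of $\tau$ along that cycle is the contradiction. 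This global cycle argument is what is missing from your proposal: it is precisely the mechanism that converts the local inequality $\tau(E(c_\ell))\leq\tau(E(a_i^\ell))$ at each slot into a contradiction across all slots simultaneously, and it is where the ``no common edge-ingredient'' hypothesis and the partition conditions on the $I_{i,k}$ are actually consumed.
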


\begin{proof}
In order to show that each $a_i$ does not appear as a summand  of the product $b_i$ for $i = 1, \dots ,s$, we are going to assume it does, and construct a graph $G_b$ associated to $b$ as follows.

  The vertices of $G_b$ are the edge-ingredients of the cells $a_i^j$, which we shall denote by $(i,j)$. The vertices $(i,j)$, which correspond to edge-ingredients of $a_i^j$ such that $a_i^j \in b_i$ (this is, $a_i^j$ is a factor of $b_i$), are isolated vertices in $G_b$. Two vertices $(i,j)$ and $(k,l)$ are adyacent in $G_b$ if $\iota (E(a_i^j)) = \iota (E(a_k^l))$.

Finally we shall give an orientation to every edge of $G_b$ as follows: take $((i,j),(k,l))$ an edge of $G_b$, we orient it from $(i,j)$ to $(k,l)$ if $\tau (E (a_i^j) > \tau (E (a_k^l))$.

Now we will show that every non-isolated vertex in $G_b$ has indegree and outdegree at least one. Recall Remark \ref{remark} and take a non-isolated vertex $(i,j)$. Since $a_i^j \notin b_i$, there exists $k \in \{1, \dots, s\}$, $k \neq i$ such that  $a_i^j \in b_k$. 
Because we are assuming $a_k \in b_k$, there exists $l$ such that $\iota (E (a_k^l ))= \iota (E(a_i^j))$ and $\tau (E(a_k^l)) > \tau (E(a_i^j))$. Thus $((k,l), (i,j))$ is an oriented edge in $ G_b$, and hence $(i,j)$ has indegree at least one. Now, since $a_i^j \notin b_i$, there exist $k', l'$ such that $a_{k'}^{l'} \in b_i$. This means that  $\iota (E (a_k^l ))= \iota (E(a_i^j))$ and $ \tau (E(a_i^j)) > \tau (E( a_{k'}^{l'}))$, thus $ ((i,j),(k',l')) \in G_b$. In turn, this means that $(i,j)$ has outdegree at least one.

 Thus every vertex in $G_b$ has in and out degree at least one. This implies that $G_b$ contains an oriented cycle. Let $\{(i_1, j_1), \dots, (i_r,j_r)\}$ be an oriented cycle in $G_b$ thus $((i_l, j_l),(i_{l+1}, j_{l+1}))$ and $((i_r, j_r), (i_1, j_1))$ are oriented edges in $G_b$ for $l \in \{1, \dots, r-1\}$ but this means that $$\tau (E(a_{i_1}^{j_1}) > \tau (E(a_{i_2}^{j_2}) > \dots >\tau (E(a_{i_r}^{j_r}) > \tau (E(a_{i_1}^{j_1}),$$ a contradiction.
\end{proof}

\begin{lema}\label{aristasdis}
Assume $a_1, \dots a_s$ are critical $m$-cells of maximal dimension in $U\mathcal{D}^nT$ with no common edge-ingredient. Then $TC(U \mathcal{D}^nT)  \geq sm$. 
\end{lema}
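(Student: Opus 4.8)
The plan is to obtain the bound through the zero-divisor cup-length, in the spirit of (and generalizing) Scheirer's $s=2$ argument. Write $X=U\mathcal{D}^nT$; we may assume $m\ge1$, the case $m=0$ being vacuous. By Proposition~\ref{cota} it is enough to exhibit $sm$ $s$-th zero-divisors whose cup product in $H^*(X^s)$ is nonzero, for then $sm\le\zcl_s(X)\le\TC_s(X)$. First I would set up notation. By Theorem~\ref{fact} write each maximal critical cell as a strong interaction product of critical $1$-cells, $a_i=a_i^1\smile\cdots\smile a_i^m$, and let $\beta_i^j\in H^1(X)$ and $\alpha_i\in H^m(X)$ be the classes dual to $a_i^j$ and to $a_i$, so that, up to sign, $\alpha_i=\beta_i^1\smile\cdots\smile\beta_i^m$ by Proposition~\ref{interactionproducts}. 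Since the Morse differential vanishes, the duals of the critical cells form a $\mathbb{Z}$-basis of $H^*(X)$; in particular $H^*(X)$ is free (so $H^*(X^s)\cong H^*(X)^{\otimes s}$ by Künneth), each $\alpha_i\neq0$, and $H^{>m}(X)=0$ because $m$ is the largest dimension of a critical cell. With $\pr_1,\dots,\pr_s\colon X^s\to X$ the projections, I set
\[
z_i^j:=\pr_i^*(\beta_i^j)-\pr_{i+1}^*(\beta_i^j)\qquad(1\le i\le s,\ 1\le j\le m),
\]
coordinate indices read cyclically modulo $s$; each $z_i^j$ pulls back to $0$ under the iterated diagonal (both summands restrict to $\beta_i^j$), so it is an $s$-th zero-divisor, and there are exactly $sm$ of them. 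Put $P:=\prod_{i,j}z_i^j\in H^{sm}(X^s)$.

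The first step uses maximality of $m$ to collapse $P$ onto a single multidegree. When $P$ is expanded, each monomial routes, for every pair $(i,j)$, the class $\beta_i^j$ into one of the tensor slots $i$ or $i+1$; a monomial placing more than $m$ classes into one slot is zero, since $H^{>m}(X)=0$. As the $sm$ classes are distributed over $s$ slots, the surviving monomials are precisely those putting exactly $m$ classes in each slot, so $P\in H^m(X)^{\otimes s}$. Encoding a surviving monomial by the sets $A_k\subseteq\{1,\dots,m\}$ of those $j$ for which $z_k^j$ contributes $\pr_k^*(\beta_k^j)$, the equidistribution forces $|A_1|=\cdots=|A_s|=:t$, and the entry in slot $k$ is $\bigl(\prod_{j\in A_k}\beta_k^j\bigr)\smile\bigl(\prod_{j\notin A_{k-1}}\beta_{k-1}^j\bigr)$, a cup product of $t$ factors of $a_k$ and $m-t$ factors of $a_{k-1}$.

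The second step identifies a nonzero coefficient of $P$. The \emph{diagonal} monomial, $A_1=\cdots=A_s=\{1,\dots,m\}$ (i.e.\ $t=m$), has $k$-th entry $\alpha_k$ and so contributes $\pm(\alpha_1\otimes\cdots\otimes\alpha_s)$, a signed basis element of $H^m(X)^{\otimes s}$, hence nonzero. I would then argue that no other surviving monomial contributes to the coefficient of $\alpha_1\otimes\cdots\otimes\alpha_s$: when $t=0$ the entries are $\alpha_s,\alpha_1,\alpha_2,\dots,\alpha_{s-1}$, a nontrivial cyclic permutation of $(\alpha_1,\dots,\alpha_s)$ and hence a different basis element, since the $\alpha_i$ cannot all coincide ($m\ge1$ and the $a_i$ share no edge-ingredient); when $0<t<m$ the entry in each slot $k$ genuinely mixes at least one factor of $a_k$ with at least one factor of $a_{k-1}$, and Proposition~\ref{Gb} (which rests on Corollary~\ref{remark} and exactly the hypothesis that the $a_i$ share no edge-ingredient) then shows $a_k$ is not a summand of that entry. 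In all cases the $\alpha_k$-component of some slot vanishes, so the coefficient of $\alpha_1\otimes\cdots\otimes\alpha_s$ in $P$ equals $\pm1$; thus $P\neq0$ and $\TC_s(X)\ge sm$.

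I expect the non-cancellation in the second step to be the main obstacle: a priori the numerous cross terms---each a cup product of $1$-cells drawn from two different $a_i$'s---could conspire to kill the diagonal term, and controlling this is precisely the purpose of Proposition~\ref{Gb}. Its proof builds an auxiliary directed graph whose vertices are the edge-ingredients $(i,j)$, with an arrow from $(i,j)$ to $(k,l)$ whenever $\iota(E(a_i^j))=\iota(E(a_k^l))$ and $\tau(E(a_i^j))>\tau(E(a_k^l))$; the absence of a common edge-ingredient forces every non-isolated vertex to have positive in- and out-degree, so the graph has an oriented cycle, contradicting the linear order on terminal vertices. By contrast the maximal-dimension hypothesis is needed only in the first step, where it pins $P$ to the multidegree $(m,\dots,m)$ and keeps the combinatorial bookkeeping finite.
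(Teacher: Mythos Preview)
Your argument is correct and essentially identical to the paper's own proof: both factor each $a_i$ via Theorem~\ref{fact}, form the same cyclic zero-divisors $\pr_i^*(\beta_i^j)-\pr_{i+1}^*(\beta_i^j)$, use maximal dimension to isolate the top multidegree, and invoke Proposition~\ref{Gb} to rule out cancellation of the diagonal term $\pm\alpha_1\otimes\cdots\otimes\alpha_s$. Your write-up is in places more explicit (the parametrization of surviving monomials by the sets $A_k$ and the separate treatment of the $t=0$ case), but the strategy and the key input are the same.
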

\begin{proof}
By Theorem \ref{fact}, the critial m-cells $a_i$ are the product of $m$ critical 1-cells thus $a_i= a_i^1 \smile \dots \smile a_i^m$ for $i= 1,\dots ,s$. Consider the following zero divisors:
$A_{i,j} = 1_1 \otimes 1_2 \otimes \dots \otimes 1_{i-1} \otimes a_i^j \otimes 1_{i+1}\otimes \dots \otimes 1_s - 1_1 \otimes 1_2 \otimes \dots \otimes 1_{i} \otimes a_i^j \otimes 1_{i+2}\otimes \dots \otimes 1_s$  and $A_{s,j} = 1 \otimes 1 \otimes \dots \otimes 1 \otimes a_s^j - a_s^j \otimes 1 \otimes \dots \otimes 1$ for $1 \leq j \leq m$ and $1 \leq i < s$.
We will show that the product 
\begin{equation}\label{0prod}
\prod_{i=1}^s (\prod_{j=1}^m A_{i,j})
\end{equation}
 is non zero.
 Notice that (\ref{0prod}) contains the terms $\pm a_1 \otimes a_2 \otimes \dots \otimes a_s$ and $ \pm a_s \otimes a_1 \otimes \dots \otimes a_{s-1}$. Since every cell $a_i$ is of maximal dimension, any product of more than $s$ factors is zero, thus any other non-zero term which appears in (\ref{0prod}) is of the form \begin{equation} b_1 \otimes b_2 \otimes \dots \otimes b_s  \mbox{, where each } b_i = \prod_{k=1}^s (\prod_{j \in I_{i,k}} a_i^j), \end{equation}\label{bb} and for each 
  $i \in \{ 1, \dots, s\}$, $\cup_{k=1}^s I_{i,k} = \{1,2, \dots, m\}$ and $ I_{1,j} \cap I_{i,k} = \emptyset$ when $j \neq k$.
By Proposition \ref{Gb}, $a_1 \otimes \dots \otimes a_s$ can not appear in any term of the form $b_1 \otimes \dots \otimes b_s$, thus the product (\ref{0prod}) is a non zero product of $ms$ zero divisors and by Proposition \ref{cota}, $TC_s(U\mathcal{D}^nT )\geq ms$.
\end{proof}

 Notice that if $v$ is a vertex of degree three then there exists a unique possible critical edge inciding in $v$. Given a vertex $v$ of degree three, the \textit{block} $B(v)$ of $v$ is the set consisting of the unique possible critical edge inciding in $v$ together with the vertex $v+1$.

\begin{teo}\label{par,s}
Let $m$ be the amount of essential vertices and $k$  the amount of vertices of degree three, thus $k \leq m$ in $T$.
Let $n = 2(m-k+l)+ \varepsilon$ with $\varepsilon\in \{0,1\}$ and $ l \leq \lfloor \frac{k}{s} \rfloor (s-1).$ Then $TC_s(U\mathcal{D}^nT) \geq s(m-k+l)$.
\end{teo}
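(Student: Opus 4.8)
The plan is to reduce the statement to Lemma~\ref{aristasdis} by producing $s$ critical $m'$-cells of maximal dimension (where $m' = m-k+l$) that share no common edge-ingredient, for configuration size $n = 2m' + \varepsilon$. First I would identify which critical cells have maximal dimension when $n = 2m'+\varepsilon$: a critical $m'$-cell uses $m'$ edge-ingredients, one at each of $m'$ distinct essential vertices, and the remaining $n - 2m' = \varepsilon$ vertex-ingredients must be distributed so that all ingredients remain critical (blocked or order-disrespecting); for $\varepsilon = 0$ there is essentially no slack, and for $\varepsilon = 1$ there is one extra vertex to park at the root. So the dimension $m'$ is indeed the homotopy dimension-realizing value, and a critical $m'$-cell of maximal dimension is one whose edge-ingredients occupy $m'$ essential vertices with minimal vertex padding.

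The core combinatorial step is to choose, for each $i \in \{1,\dots,s\}$, a set $S_i$ of $m'$ essential vertices together with a critical edge at each, so that the $S_i$ as ``decorated'' configurations have no edge-ingredient in common — meaning no vertex $v$ appears in all $s$ of the $S_i$ with the \emph{same} critical edge. The budget for this is exactly the hypothesis $l \le \lfloor k/s\rfloor(s-1)$. Here is the mechanism I would use: at a degree-three vertex $v$ there is a \emph{unique} possible critical edge, so such a vertex is ``rigid'' — once $v \in S_i$, the edge-ingredient at $v$ is forced, and if $v$ lies in all $S_i$ we get a common edge-ingredient. Hence for degree-three vertices we must avoid using the same one in all $s$ configurations. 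At a vertex of degree $\ge 4$ there are $\ge 2$ distinct critical edges available (different choices of the blocked direction $r+1$), so such a vertex can safely be placed in all $s$ configurations with distinct decorations. So I would take the $m-k$ vertices of degree $\ge 4$ into every $S_i$ (decorated differently across the $i$'s to avoid coincidence), and then fill out each $S_i$ with $l$ additional degree-three vertices chosen from the pool of $k$ degree-three vertices; the constraint $l \le \lfloor k/s\rfloor(s-1)$ is precisely what guarantees one can pick, for each $i$, an $l$-subset of the $k$ degree-three vertices such that no degree-three vertex is selected by all $s$ indices — e.g. partition $s\lfloor k/s\rfloor$ of the degree-three vertices into $\lfloor k/s\rfloor$ blocks of size $s$, and in block $t$ let index $i$ omit the $i$-th vertex, using the other $s-1$; doing this over $\lceil l/(s-1)\rceil \le \lfloor k/s\rfloor$ blocks supplies the needed $l$ vertices per index while keeping the omission pattern. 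Each resulting decorated configuration determines (by Theorem~\ref{fact}) a critical $m'$-cell $a_i$, these have dimension $m' = m-k+l$, they are of maximal dimension for $n = 2m'+\varepsilon$, and by construction they have no common edge-ingredient.

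Once the $a_1,\dots,a_s$ are in hand, Lemma~\ref{aristasdis} gives $TC_s(U\mathcal D^n T) \ge s m' = s(m-k+l)$, completing the proof. I would also need a small check that the tree is ``sufficiently subdivided'' so Abrams' Theorem~\ref{teoa} applies and so that the requisite critical cells actually exist on $T$ with $n$ points; this is the kind of hypothesis tacitly in force throughout the section (one may subdivide $T$ without changing $U\mathcal C^n T$ up to homotopy), so I would state it and move on.

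\textbf{Main obstacle.} The delicate point — and the one I expect to require the most care — is the explicit counting argument showing that $l \le \lfloor k/s\rfloor(s-1)$ is \emph{exactly} enough to choose the $s$ families $S_i$ of degree-three vertices with no vertex common to all $s$ families, while ensuring each $|S_i|$ has the right total size $m'$ and each decorated configuration is genuinely critical of maximal dimension. One must be careful that (a) the degree-$\ge 4$ vertices can always be assigned $s$ genuinely distinct critical edges (this uses $d(v)-1 \ge 3$, giving at least two admissible ``blocked directions'' — I should double-check the order-disrespecting/blocked bookkeeping here), and (b) that parking the $\varepsilon$ leftover vertices (at the root, say) does not accidentally destroy criticality or maximality of dimension or create an unexpected coincidence. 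The rest is an application of the machinery already established (Theorems~\ref{fact}, \ref{productsvialowerpaths} and Lemma~\ref{aristasdis}).
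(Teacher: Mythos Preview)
Your proposal is correct and follows essentially the same strategy as the paper: construct $s$ maximal-dimension critical cells with no common edge-ingredient by (i) placing all $m-k$ vertices of degree $\geq 4$ in every cell with two alternating edge choices (the paper uses $(x_j,u_j)$ for even $i$ and $(x_j,z_j)$ for odd $i$), and (ii) filling each cell with $l$ degree-three blocks via the complement-of-a-partition trick so that no degree-three vertex lands in all $s$ cells, then invoke Lemma~\ref{aristasdis}. Two small points: you only need \emph{two} distinct critical edges at each high-degree vertex (not $s$), since ``no common edge-ingredient'' merely requires that not all $s$ coincide; and the paper separately treats the case $k<s$ by repeating one of the cells, which you should also address.
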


\begin{proof}
We are going to exibit $s$ cells in $U\mathcal{D}^nT$ of maximal dimension which satisfy the hypothesis of Lemma \ref{aristasdis}. Assume first that $k\geq s$.

Let $x_1, \dots, x_{m-k}$ be the essential vertices of degree greater than three and let $y_{1}, \dots, y_k$ be the vertices of degree three. Assume for now that $\varepsilon=0$.
Assume that  $w_i, z_i, u_i \in N(x_i)$ for $i= 1, \dots m-k$, with $x_i<w_i<z_i<u_i$ and $k= sj+r$ with $ r \in \{0,1, \dots, s-1\}$. 
$$U_1=\{ B(y_1), \dots, B(y_j)\},U_2=\{ B(y_{j+1}), \dots, B(y_{2j})\}, \cdots, U_s= \{B(y_{(s-1)j+1}),\dots, B(y_{sj})\}$$ and $U_{s+1}=\{B(y_{sj+1}), \dots, B(y_{sj+r})\}$, in particular, $U_{s+1}$ can be an empty set.

 Let $ \Lambda= \cup_{i=1}^s U_i$, and $V_i = \Lambda-U_i$ for $1 \leq i \leq s$.
Notice that each $V_i$ has $(s-1)j\geq l$ blocks of the form $B(y_j)$, and that $\cap_{i=1}^s V_i = \emptyset$.

Let $W_i \subseteq V_i$ be such that $W_i$ contains exactly $l$ blocks of the form $B(y_j)$ for some $j$. Then the cells 
 $$c_i= \{ (x_1,u_1), z_1, (x_2,u_2),z_2, \dots, (x_{m-k},u_{m-k}), z_{m-k}, W_i\}$$ for $i$ even with $ 1< i \leq s$ and
 $$c_t= \{ (x_1,z_1), w_1, (x_2,z_2),w_2, \dots ,(x_{m-k},z_{m-k}), w_{m-k}, W_t \}$$ for $t$ odd with $1 \leq t \leq s$ are $s$ cells such that there is no edge belonging to all $s$ cells.
 If $\varepsilon= 1$, we take $c'_l = c_l \cup \{0\}$ (where $0$ denotes the root vertex) for $1 \leq l\leq s$. 
 Thus by Lemma \ref{aristasdis}, $TC_s(U\mathcal{D}^nT) \geq s(m-k+l)$.
 
 Consider now the case when $k <s$. Take $s' \leq k$ and construct $c_1, \dots, c_{s'}$ cells as above. Then $c_1^1, c_1^2, \dots, c_1^{s-s'}, c_2, \dots, c_{s'}$ are $s$ cells with no edge-ingredient in common, where $c_1^l =c_1$ for $ l \in \{1, \dots, s-s'\}$. Thus by Lemma \ref{aristasdis}, $TC_s(U\mathcal{D}^nT) \geq s(m-k+l)$.
\end{proof}

\begin{coro} 
 The bound obtained in Theorem \ref{par,s} is sharp
 \end{coro}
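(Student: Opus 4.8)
The plan is to match the lower bound $\TC_s(U\mathcal{D}^nT)\ge s(m-k+l)$ of Theorem~\ref{par,s} with an upper bound of the same magnitude, so that $\TC_s(U\mathcal{D}^nT)=s(m-k+l)$ and the estimate of Theorem~\ref{par,s} is actually attained (hence cannot be improved). The only general upper bound at hand is the one in Proposition~\ref{cota}: for a $c$-connected space $X$ with the homotopy type of a CW complex, $\TC_s(X)\le s\,\hdim(X)/(c+1)$. The space $U\mathcal{D}^nT$ is aspherical with fundamental group $B_nT$, hence path-connected but (as soon as $T$ has an essential vertex and $n\ge2$) not simply connected, so the sharpest choice is $c=0$ and Proposition~\ref{cota} gives $\TC_s(U\mathcal{D}^nT)\le s\,\hdim(U\mathcal{D}^nT)$. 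Thus the corollary reduces to proving $\hdim(U\mathcal{D}^nT)\le m-k+l$.

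To bound the homotopy dimension I would use the Farley--Sabalka gradient field recalled in Section~2.2: collapsing $U\mathcal{D}^nT$ along it exhibits it, up to homotopy, as the Morse CW complex, which has exactly one cell of dimension $d$ for each critical $d$-cell; hence $\hdim(U\mathcal{D}^nT)$ is at most the largest dimension of a critical cell. A critical $m'$-cell has the form $\{k\,|\,x_1,p_1,q_1|\cdots|x_{m'},p_{m'},q_{m'}\}$ with $x_1<\cdots<x_{m'}$ essential: it consists of $k$ vertex-ingredients blocked at the root, one edge-ingredient at each $x_i$, and the vertex-ingredients recorded by $p_i$ and $q_i$, subject to the standing requirement $|p_i|\ge 1$ (a blocked vertex in a forward direction below that of the edge is precisely what forces the edge at $x_i$ to be order-disrespecting). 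Counting the $n$ ingredients gives $n=k+m'+\sum_{i=1}^{m'}\bigl(|p_i|+|q_i|\bigr)\ge m'+m'=2m'$, so $m'\le\lfloor n/2\rfloor$. Since $n=2(m-k+l)+\varepsilon$ with $\varepsilon\in\{0,1\}$, this gives $\hdim(U\mathcal{D}^nT)\le\lfloor n/2\rfloor=m-k+l$, whence $\TC_s(U\mathcal{D}^nT)\le s(m-k+l)$.

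Putting the two inequalities together yields $\TC_s(U\mathcal{D}^nT)=s(m-k+l)$ for every tree satisfying the hypotheses of Theorem~\ref{par,s}, so the bound there is sharp; as a by-product one also gets $\hdim(U\mathcal{D}^nT)=m-k+l$, since a strictly smaller homotopy dimension would contradict the lower bound. I do not anticipate a serious obstacle: the only point requiring a little care is the critical-cell bookkeeping that produces $n\ge 2m'$ --- in particular confirming that the convention $|p_i|\ge 1$ is indeed forced for every essential vertex occurring in a critical cell --- together with separating off the degenerate cases (a tree with no essential vertex, or very small $n$), where $\TC_s$ and $s(m-k+l)$ are both zero.
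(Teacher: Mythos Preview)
Your approach is essentially the same as the paper's: use Proposition~\ref{cota} with $c=0$ to get $\TC_s(U\mathcal{D}^nT)\le s\,\hdim(U\mathcal{D}^nT)$, and then identify $\hdim(U\mathcal{D}^nT)$ with $\lfloor n/2\rfloor=m-k+l$. The only difference is that the paper simply quotes the known formula $\hdim(U\mathcal{D}^nT)=\lfloor n/2\rfloor$ (valid because the hypothesis $l\le\lfloor k/s\rfloor(s-1)$ forces $n\le 2m$), whereas you rederive the inequality $\hdim\le\lfloor n/2\rfloor$ by the critical-cell count $n\ge 2m'$; both arguments are correct and lead to the same conclusion.
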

 \begin{proof}
 By Lemma \ref{cota}, $TC_s (U\mathcal{D}^nT) \leq s(h dim (U\mathcal{D}^nT))$ and notice that since $n= 2(m-\lfloor \frac{k}{s} \rfloor +l)+ \varepsilon$ with $ l \leq \lfloor \frac{k}{s} \rfloor (s-1)$,  we have $n < 2m$ and $h dim (U\mathcal{D}^nT)= \frac{n}{2}$ thus $TC_s (U\mathcal{D}^nT) \leq s\frac{n}{2}.$ 
 \end{proof}

So far, we have the case where there are many vertices of degree three in relation to the amount of vertices of degree three. On the other hand, when there are no vertices of degree three, thus the tree is binary, we know from Theorem \ref{pr} that the cohomology ring of $U\mathcal{D}^nT$ is an exterior face ring. This means that every critical $m$-cell has a unique factorization as the product of $m$ critical 1-cells, which is particular useful if we want to generalize lemma \ref{aristasdis}.

So, to combine these two results into a more general one, we shall construct a binary tree $O(T)$ which is relatively similar to $T$ which will help us construct zero divisors.

\

Given a tree $T$, we shall construct a binary tree $O(T)$ associated to $T$. Assume for now that $T$ has no vertices of degree two. Let $x_1, \dots, x_m$ be the essential vertices of $T$ and let $d_i$ denote $d(x_i)-1$ and let $\{y_i^0, \dots, y_i^{d_i}\}$ be the set of neighbours of $x_i$ in $T$.
\begin{itemize}
\item The vertex set of $O(T)$ is  $$V(O(T)) = \{ v \in V(T): d(v)=1\} \cup \bigcup_{i=1}^m\{x_i^1, x_i^2, \dots, x_i^{d_i-1}\}.$$
\item The edge set of $O(T)$ is $$E(O(T)) = \bigcup_{i=1}^m \left( \{(y_i^0,x_i^1), (x_i^{d_i-1},y_i^{d_i}),(x_i^{d_i-1},y_i^{d_i-1}) \}\bigcup_{j=1}^{d_i-2}  \{(x_i^j,y_i^j), (x_i^j, x_i^{j+1})\} \right).$$ 
\end{itemize}
In other words, we are substituting every vertex of degree $d >3$ with a path of length $d-2$ (see Figure \ref{bt}). Notice that for every vertex of degree three in $T$, there is a corresponding vertex in $O(T)$.
Note also that $O(T)$ does not depend on the choice of a plannar embedding of $T$, but ofcourse a plannar embedding of $T$ leads to a plannar embedding of $O(T)$.

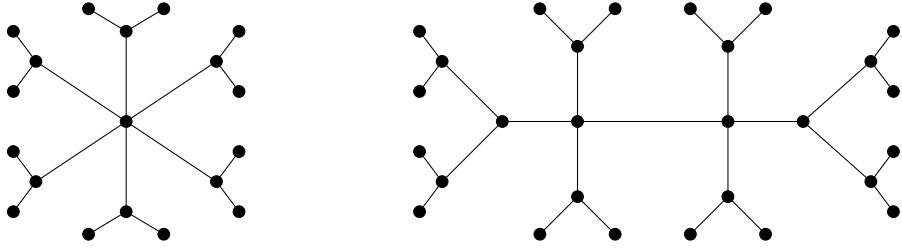
\begin{figure}
\centering
\begin{tikzpicture}
\node[circle, draw, scale=.4, fill=black] (0) at (0,0){};
\node[circle, draw, scale=.4, fill=black] (1) at (0,1.2){};
\node[circle, draw, scale=.4, fill=black] (1a) at (0.5,1.5){};
\node[circle, draw, scale=.4, fill=black] (1b) at (-0.5,1.5){};
\node[circle, draw, scale=.4, fill=black] (2) at (0,-1.2){};
\node[circle, draw, scale=.4, fill=black] (2b) at (0.5,-1.5){};
\node[circle, draw, scale=.4, fill=black] (2a) at (-0.5,-1.5){};
\node[circle, draw, scale=.4, fill=black] (3) at (1.2,.8){};
\node[circle, draw, scale=.4, fill=black] (3a) at (1.5,1.2){};
\node[circle, draw, scale=.4, fill=black] (3b) at (1.5,.4){};
\node[circle, draw, scale=.4, fill=black] (4) at (1.2,-.8){};
\node[circle, draw, scale=.4, fill=black] (4a) at (1.5,-.4){};
\node[circle, draw, scale=.4, fill=black] (4b) at (1.5,-1.2){};
\node[circle, draw, scale=.4, fill=black] (5) at (-1.2,.8){};
\node[circle, draw, scale=.4, fill=black] (6) at (-1.2,-.8){};
\node[circle, draw, scale=.4, fill=black] (5a) at (-1.5,1.2){};
\node[circle, draw, scale=.4, fill=black] (6a) at (-1.5,-1.2){};
\node[circle, draw, scale=.4, fill=black] (5b) at (-1.5,.4){};
\node[circle, draw, scale=.4, fill=black] (6b) at (-1.5,-.4){};
\draw (0)--(1);
\draw (1)--(1a);
\draw (1)--(1b);
\draw (0)--(2);
\draw (2)--(2a);
\draw (2)--(2b);
\draw (0)--(3);
\draw (3a)--(3);
\draw (3b)--(3);
\draw (4a)--(4);
\draw (4b)--(4);
\draw (0)--(4);
\draw (0)--(6);
\draw (0)--(5);
\draw (6)--(6a);
\draw (5)--(5a);
\draw (6)--(6b);
\draw (5)--(5b);

\node[circle, draw, scale=.4, fill=black] (3x) at (9.9,.8){};
\node[circle, draw, scale=.4, fill=black] (3ax) at (10.2,1.2){};
\node[circle, draw, scale=.4, fill=black] (3bx) at (10.2,.4){};
\node[circle, draw, scale=.4, fill=black] (4x) at (9.9,-.8){};
\node[circle, draw, scale=.4, fill=black] (4ax) at (10.2,-.4){};
\node[circle, draw, scale=.4, fill=black] (4bx) at (10.2,-1.2){};
\node[circle, draw, scale=.4, fill=black] (5ax) at (3.9,1.2){};
\node[circle, draw, scale=.4, fill=black] (6ax) at (3.9,-1.2){};
\node[circle, draw, scale=.4, fill=black] (5bx) at (3.9,.4){};
\node[circle, draw, scale=.4, fill=black] (6bx) at (3.9,-.4){};
\node[circle, draw, scale=.4, fill=black] (7) at (5,0){};

\node[circle, draw, scale=.4, fill=black] (8a) at (6.5,1.5){};
\node[circle, draw, scale=.4, fill=black] (8b) at (5.5,1.5){};
\node[circle, draw, scale=.4, fill=black] (8c) at (6.5,-1.5){};
\node[circle, draw, scale=.4, fill=black] (8d) at (5.5,-1.5){};
\node[circle, draw, scale=.4, fill=black] (9a) at (7.5,1.5){};
\node[circle, draw, scale=.4, fill=black] (9b) at (8.5,1.5){};
\node[circle, draw, scale=.4, fill=black] (9c) at (7.5,-1.5){};
\node[circle, draw, scale=.4, fill=black] (9d) at (8.5,-1.5){};
\node[circle, draw, scale=.4, fill=black] (9y) at (8,1){};
\node[circle, draw, scale=.4, fill=black] (9z) at (8,-1){};
\node[circle, draw, scale=.4, fill=black] (8z) at (6,1){};
\node[circle, draw, scale=.4, fill=black] (8y) at (6,-1){};
\node[circle, draw, scale=.4, fill=black] (8) at (6,0){};
\node[circle, draw, scale=.4, fill=black] (9) at (8,0){};
\node[circle, draw, scale=.4, fill=black] (10) at (9,0){};
\node[circle, draw, scale=.4, fill=black] (5x) at (4.2,.8){};
\node[circle, draw, scale=.4, fill=black] (6x) at (4.2,-.8){};

\draw (8a)--(8z);
\draw (8b)--(8z);
\draw (8c)--(8y);
\draw (8d)--(8y);
\draw (9a)--(9y);
\draw (9b)--(9y);
\draw (9c)--(9z);
\draw (9d)--(9z);

\draw (8)--(8y);
\draw (8)--(8z);
\draw (9)--(9y);
\draw (9)--(9z);
\draw (10)--(3x);
\draw (10)--(4x);
\draw (5x)--(7);
\draw(6x)--(7);
\draw (7)--(8);
\draw (8)--(9);
\draw (9)--(10);
\draw (6x)--(6ax);
\draw (5x)--(5ax);
\draw (6x)--(6bx);
\draw (5x)--(5bx);
\draw (3ax)--(3x);
\draw (3bx)--(3x);
\draw (4ax)--(4x);
\draw (4bx)--(4x);

\end{tikzpicture}
\caption{The trees $T$ and $O(T)$.}\label{bt}
\end{figure}

\begin{lema}\label{cam}
Let $T$ be a tree and consider the cells $a_i = \{k_i | x_i, p_i,q_i\} \in U\mathcal{D}^nT$ with $d(x_i)=3$ for $1\leq i \leq m$. With the notation introduced above, let $b_i = \{k_i |x_i^1, p_i,q_i\} \in U\mathcal{D}^nO(T)$ for $ 1 \leq i \leq m$. Then $a_1 \smile \dots \smile a_m \neq 0$ if and only if $b_1 \smile \dots \smile b_m \neq 0$. Moreover, $a_1 \smile \dots \smile a_m$ is a strong interaction product if and only if $b_1 \smile \dots \smile b_m$ is a strong interaction product, and if $a_1 \smile \dots \smile a_m = \{k | x_1, P_1,Q_1| \dots | x_m, P_m,Q_m\}$ then $b_1 \smile \dots \smile b_m= \{k | x_1^1, P_1,Q_1| \dots | x_m^1, P_m,Q_m\}$.
\end{lema}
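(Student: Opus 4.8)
The plan is to show that, when every $x_i$ has degree $3$, the combinatorial machinery that computes cup products of critical cells supported at $\{x_1,\dots,x_m\}$ — the interaction parameters together with Proposition~\ref{interactionproducts} and Theorem~\ref{productsvialowerpaths} — cannot distinguish $T$ from $O(T)$, because $O(T)$ differs from $T$ only at essential vertices of degree $>3$, none of which is among the $x_i$.

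\emph{Step 1: a combinatorial dictionary.} Collapsing each inserted path $x_i^1\!-\!\cdots\!-\!x_i^{d_i-1}$ of $O(T)$ to a single point recovers $T$; this map sends $\{x_1^1,\dots,x_m^1\}$ bijectively onto $\{x_1,\dots,x_m\}$ (these are degree-$3$ vertices, untouched by the construction) and induces a bijection between the connected components of $O(T)\setminus\{x_1^1,\dots,x_m^1\}$ and of $T\setminus\{x_1,\dots,x_m\}$ which preserves the component containing the root and, at each $x_i$ versus $x_i^1$, the labelling $0,1,2$ of the three incident directions (here one uses that a planar embedding of $T$ induces one of $O(T)$). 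In particular, for each component the set of $x_j$ bounding it in $T$ matches the set of $x_j^1$ bounding the corresponding component in $O(T)$, so the pruned-leaf sets $L_{i,\ell_i}$ agree under the dictionary. Since the interaction parameters $\mathcal R_0,\mathcal P_i,\mathcal Q_i$ are built only from $n$, the integers $k_j$, the vectors $p_j,q_j$ and pruned-leaf membership, the interaction parameters of $(a_1,\dots,a_m)$ in $U\mathcal D^nT$ equal those of $(b_1,\dots,b_m)$ in $U\mathcal D^nO(T)$. Hence $(a_1,\dots,a_m)$ interacts strongly (respectively weakly, respectively not at all) if and only if $(b_1,\dots,b_m)$ does. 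The strongly-interacting case of the lemma now follows at once from Proposition~\ref{interactionproducts}: both products are single cells, namely $\{\mathcal R_0\mid x_1,\mathcal P_1,\mathcal Q_1\mid\cdots\}$ and $\{\mathcal R_0\mid x_1^1,\mathcal P_1,\mathcal Q_1\mid\cdots\}$, exactly as claimed.

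\emph{Step 2: the general case.} Let $\phi$ be the bijection sending a critical cell $\{k\mid x_{j_1},P_1,Q_1\mid\cdots\mid x_{j_r},P_r,Q_r\}$ of $U\mathcal D^nT$ to $\{k\mid x_{j_1}^1,P_1,Q_1\mid\cdots\mid x_{j_r}^1,P_r,Q_r\}$ of $U\mathcal D^nO(T)$, extended linearly. I would prove, by induction on the number of factors, that $\phi(a_{j}\smile\cdots\smile a_m)=b_{j}\smile\cdots\smile b_m$ for all $j$; the case $j=1$ is the lemma. Compute $a_{j}\smile(a_{j+1}\smile\cdots\smile a_m)$ by first expanding $\Pi:=a_{j+1}\smile\cdots\smile a_m=\sum_\alpha\varepsilon_\alpha c_\alpha$ via the inductive hypothesis, and then multiplying each term by $a_{j}$. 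Every $c_\alpha$ is a critical cell, hence a strong interaction product of $1$-cells by Theorem~\ref{fact}, so $a_{j}\smile c_\alpha$ is governed by Proposition~\ref{interactionproducts} if $a_j$ and the factors of $c_\alpha$ interact strongly, is $0$ if they do not interact, and is given by Theorem~\ref{productsvialowerpaths} (with $\Pi_1=c_\alpha$, valid because $x_j$ is the smallest vertex involved) if they interact weakly but not strongly. Because $d(x_j)=3$ forces $r_{x_j}=s_{x_j}=1$, the sums over $\ell$ in Theorem~\ref{productsvialowerpaths} are empty and the weak expansion collapses to
$$a_j\smile c_\alpha=-\sum_{t=1}^{R_0}\{R_0-t\mid x_j,(t),(Q_{x_j,1})\mid\cdots\},$$
a sum of pairwise distinct cells whose indexing range and whose single-number parameters $R_0$, $Q_{x_j,1}$ (and the remaining interaction parameters) are, by Step~1, identical to those of the analogous expansion of $b_j\smile\phi(c_\alpha)$ in $O(T)$. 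Thus $\phi(a_j\smile c_\alpha)=b_j\smile\phi(c_\alpha)$ for every $\alpha$, with matching signs, whence $\phi(a_j\smile\Pi)=b_j\smile\phi(\Pi)=b_j\smile\cdots\smile b_m$; in particular, since $\phi$ is injective on cells, terms cancel in $T$ precisely when they cancel in $O(T)$, so $a_1\smile\cdots\smile a_m\neq0$ if and only if $b_1\smile\cdots\smile b_m\neq0$.

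\emph{Main obstacle.} The delicate point is the bookkeeping in Step~2: confirming that the iterated use of Theorem~\ref{productsvialowerpaths} yields, over the two trees, signed sums corresponding cell-by-cell under $\phi$ — in particular that signs match and that two expansion terms coincide as cells in $T$ exactly when their $\phi$-images coincide in $O(T)$. What makes this tractable is the rigidity of degree-$3$ vertices: each has a unique possible critical edge and only one ``$q$-direction'', so the local contribution of each $x_i$ to any product is manifestly unchanged when a neighbouring degree-$>3$ vertex of $T$ is replaced by a path. Carefully setting up the component/pruned-leaf dictionary of Step~1 (respecting the root and the direction labels) is the other point requiring attention.
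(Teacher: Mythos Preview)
Your Step~1 is precisely the paper's proof: the paper simply observes that the interaction parameters of $(a_1,\dots,a_m)$ in $T$ and of $(b_1,\dots,b_m)$ in $O(T)$ coincide (because the component and pruned-leaf structure relative to a set of degree-$3$ essential vertices is untouched by the construction of $O(T)$) and then invokes Proposition~\ref{interactionproducts}. That is the entire argument in the paper --- one sentence.

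Your Step~2 goes beyond what the paper attempts. Read literally, the paper's one-line proof only establishes the strong-interaction portion of the lemma (the ``moreover'' clauses and the corresponding direction of the first ``if and only if''); it does not treat the case where the factors interact only weakly or not at all. Your inductive argument via Theorem~\ref{productsvialowerpaths}, using that $s_{x_j}=1$ forces the sums~(\ref{dosmas}) and~(\ref{unomas}) to be empty so that only~(\ref{laquefaltaba}) survives, fills this gap correctly and yields the full biconditional with matching signs under $\phi$. In the paper's sole application of the lemma (inside Lemma~\ref{bueno}) only strongly interacting products are ever considered, so the extra work is not needed there; but if one wants the lemma exactly as stated, your Step~2 is the honest way to complete it. One small point worth making explicit in your write-up: the induction needs the $x_i$ ordered so that $x_j<x_{j+1}<\cdots<x_m$, since Theorem~\ref{productsvialowerpaths} requires the newly adjoined factor to sit at the smallest essential vertex --- just fix this ordering at the outset.
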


\begin{proof}
The result follows from Proposition \ref{interactionproducts}, and the fact that the interaction parameters with respect to the factors $a_1, \dots, a_m$ and $b_1, \dots, b_m$ agree in both trees. 
\end{proof}

\begin{lema} \label{bueno}
Assume we have $ms$ critical 1-cells $a_i^j$ for $1 \leq i \leq s$ and $1 \leq j \leq m$ in $U
\mathcal{D}^nT$ with the following properties:
\begin{itemize}
\item There exists $k \in \{1, \dots, s\}$ such that for every $1 \leq i,l \leq s$ and $1 \leq j \leq k$, $E(a_i^j)= E(a_l^j)$ and $\iota (E (a_i^j))$ is a vertex of degree three. 
\item The product $ a_i^1  \smile \dots \smile a_i^m$ is a strong interaction product for every $1 \leq i \leq s$.
\item The $(m-k)s$ remaining edges are distinct, this is, $\bigcap_{i=1}^s \bigcap_{j=k+1}^m E(a_i^j) = \emptyset$. 
\item For every $j\in \{ 1,\dots, m\}$, there exist $i,l$ such that $ a_i^j \neq a_l^j$.
\end{itemize}
Then $TC_s(U\mathcal{D}^nT ) \geq sm$.
\end{lema}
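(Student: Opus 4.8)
The strategy is to imitate the proof of Lemma~\ref{aristasdis}, but now exploiting the rigidity of the binary tree $O(T)$, where factorizations of critical cells into critical $1$-cells are unique. First I would pass to $O(T)$: using Lemma~\ref{cam}, replace each edge-ingredient $E(a_i^j)$ whose initial vertex has degree three (i.e.\ the first $k$ of them, which by hypothesis coincide across all $i$) by the corresponding edge-ingredient in $O(T)$, and likewise translate the remaining $(m-k)s$ distinct edges; the hypotheses (strong interaction of each $a_i^1\smile\cdots\smile a_i^m$, distinctness of the remaining edges) are preserved by Lemma~\ref{cam}. Thus it suffices to produce a nonzero product of $sm$ zero-divisors in $H^*\big((U\mathcal{D}^nO(T))^s\big)$, and since $O(T)$ is binary, Theorem~\ref{pr} tells us $H^*(U\mathcal{D}^nO(T))$ is an exterior face ring $\Lambda_{\mathbb{Z}}(K_nO(T))$, so cup products of critical cells are either a single critical cell (when the underlying $1$-cells span a simplex, i.e.\ interact strongly) or zero.

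\textbf{Construction of the zero-divisors.} For $1\le i\le s$ and $1\le j\le m$ set, as in Lemma~\ref{aristasdis},
\begin{equation*}
A_{i,j}= 1^{\otimes(i-1)}\otimes a_i^j\otimes 1^{\otimes(s-i+1)} - 1^{\otimes i}\otimes a_i^j\otimes 1^{\otimes(s-i)}\quad(1\le i<s),\qquad
A_{s,j}= 1^{\otimes(s-1)}\otimes a_s^j - a_s^j\otimes 1^{\otimes(s-1)},
\end{equation*}
each of which restricts to zero under $\Delta_s$, hence is a zero-divisor. I claim the product $\prod_{i=1}^s\prod_{j=1}^m A_{i,j}$ is nonzero. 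Expanding, the two ``diagonal'' choices give the terms $\pm\,a_1\otimes a_2\otimes\cdots\otimes a_s$ and $\pm\,a_s\otimes a_1\otimes\cdots\otimes a_{s-1}$, where $a_i:=a_i^1\smile\cdots\smile a_i^m$; these are nonzero single critical $m$-cells by the strong-interaction hypothesis together with Theorem~\ref{interactionproducts} (applied in $O(T)$). Since each $a_i$ is an $m$-cell and $O(T)$ is binary so $\hdim(U\mathcal{D}^nO(T))$ forces any product of more than $m$ of the $1$-cells $a_i^j$ (over a fixed tensor slot) to vanish, every other surviving term of the expansion has the form $b_1\otimes\cdots\otimes b_s$ with $b_i=\prod_{k=1}^s\prod_{j\in I_{i,k}}a_i^j$, where $\bigcup_k I_{i,k}=\{1,\dots,m\}$ and the $I_{\cdot,k}$ partition appropriately, exactly as in~(\ref{bb}).

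\textbf{Cancellation does not occur.} It remains to see that no such mixed term $b_1\otimes\cdots\otimes b_s$ equals $a_1\otimes\cdots\otimes a_s$, so the latter cannot be killed. In $O(T)$ each nonzero $b_i$ is again a single critical cell, and since the factorization of a critical cell into critical $1$-cells is \emph{unique} in a binary tree (this is the content of $H^*$ being an exterior face ring, cf.\ the discussion after Theorem~\ref{pr}), $b_i=a_i$ would force $\{a_i^j : j\in\bigcup_k I_{i,k}\}=\{a_i^1,\dots,a_i^m\}$ with all of them landing in slot $i$; if this held for every $i$ the partition would be forced to be $I_{i,i}=\{1,\dots,m\}$, $I_{i,k}=\emptyset$ for $k\ne i$, which is the diagonal term already accounted for. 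For the off-diagonal mixed terms, the argument of Proposition~\ref{Gb} applies verbatim: build the oriented graph $G_b$ on the edge-ingredients $(i,j)$ of the $a_i^j$, with an arc from $(i,j)$ to $(k,l)$ when $\iota(E(a_i^j))=\iota(E(a_k^l))$ and $\tau(E(a_i^j))>\tau(E(a_k^l))$; using Corollary~\ref{remark} one shows every non-isolated vertex has positive in- and out-degree, producing an oriented cycle and hence a strictly decreasing cyclic chain of $\tau$-values, a contradiction. (The first $k$ coordinates of each $a_i$, being the edges over the degree-three vertices, are common to all $a_i$ and contribute isolated vertices; the fourth hypothesis — that in each slot $j$ not all $a_i^j$ agree — guarantees the relevant bounding behaviour needed to run Proposition~\ref{Gb}.) Therefore $\prod_{i,j}A_{i,j}\ne 0$, giving $sm$ zero-divisors with nonzero product, and $\TC_s(U\mathcal{D}^nO(T))\ge sm$; translating back by Lemma~\ref{cam} yields $\TC_s(U\mathcal{D}^nT)\ge sm$.

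\textbf{Expected main obstacle.} The delicate point is the last step: verifying that the mixed terms $b_1\otimes\cdots\otimes b_s$ genuinely cannot reproduce $a_1\otimes\cdots\otimes a_s$, i.e.\ checking that the hypotheses here (especially the fourth bullet, and the fact that the first $k$ shared edges sit over degree-three vertices) are exactly what is needed to invoke Proposition~\ref{Gb} and Corollary~\ref{remark} after the passage to $O(T)$. One must be careful that in $O(T)$ the $1$-cells $b_i$ arising as partial products are still \emph{critical} cells (not sums), which is where strong interaction of the full products and the binary structure are both used; absent strong interaction one would land in the situation of Theorem~\ref{productsvialowerpaths} and the bookkeeping would be considerably worse. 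I expect the bulk of the writing to be this verification, with the zero-divisor construction and the $O(T)$-translation being routine given the earlier results.
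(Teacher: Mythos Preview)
Your proposal has a genuine gap: the wholesale passage to $O(T)$ is not licensed by Lemma~\ref{cam}, and the final ``translating back'' step is simply false as stated. Lemma~\ref{cam} applies only to critical $1$-cells $\{k_i|x_i,p_i,q_i\}$ with $d(x_i)=3$; it says nothing about the factors $a_i^j$ for $j>k$, whose edge-ingredients may sit over vertices of degree $\geq 4$. There is no natural way to ``translate'' such a cell to $U\mathcal{D}^nO(T)$ (the vertex $x$ has been replaced by a path of $d(x)-2$ new vertices), and even if you invented one, Lemma~\ref{cam} gives you no control over cup products involving it. More seriously, even a nonzero product of $sm$ zero-divisors in $H^*\big((U\mathcal{D}^nO(T))^s\big)$ would only yield $\TC_s(U\mathcal{D}^nO(T))\ge sm$; Lemma~\ref{cam} is a statement about specific cup-product identities, not a ring map and certainly not a homotopy equivalence, so it cannot transport a $\TC_s$ lower bound from $O(T)$ back to $T$.

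The paper's proof avoids this by working in $H^*(U\mathcal{D}^nT)$ throughout and using $O(T)$ only \emph{locally}. The key device you are missing is the change of basis: for $j\le k$ one replaces $a_i^j$ by the element $\langle a_i^j\rangle=\sum_{t}\{k_i-t|x_i,p_i+t,q_i\}$, and the zero-divisors are built from $A_i^j=\langle a_i^j\rangle$ for $j\le k$ and $A_i^j=a_i^j$ for $j>k$. Writing $a_i=c_i\smile d_i$ with $c_i=\langle a_i^1\rangle\smile\cdots\smile\langle a_i^k\rangle$ and $d_i=a_i^{k+1}\smile\cdots\smile a_i^m$, Lemma~\ref{cam} together with Theorem~\ref{pr} shows that $c_i$ has a \emph{unique} factorization into the available $\langle\cdot\rangle$ generators (this is the only place $O(T)$ enters, and it is an algebraic statement inside $H^*(U\mathcal{D}^nT)$). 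Hence any mixed term $b_1\otimes\cdots\otimes b_s$ that could cancel $a_1\otimes\cdots\otimes a_s$ must already have each $b_i=c_i\smile g_i$; one then applies Proposition~\ref{Gb} to the $d$-part (where the edges are genuinely disjoint) to rule out $g_1\otimes\cdots\otimes g_s$ containing $d_1\otimes\cdots\otimes d_s$. Your invocation of Proposition~\ref{Gb} for the \emph{whole} product cannot work, since for $j\le k$ the edges $E(a_i^j)$ are common to all $i$, violating the ``no common edge-ingredient'' hypothesis of that proposition.
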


\begin{proof}
Let $a_i = \langle a_i^1 \rangle \smile \langle a_i^2 \rangle \smile \dots \smile \langle a_i^k \rangle \smile a_i^{k+1} \smile \dots \smile a_i^m$ and let $\Lambda_{i,j}=1_1 \otimes \dots \otimes 1_{i-1} \otimes A_i^j \otimes 1_{i+1} \otimes \dots \otimes 1_s - 1_1 \otimes \dots \otimes 1_{i} \otimes A_i^j \otimes 1_{i+2} \otimes \dots \otimes 1_s$ for $ 1 \leq i \leq  s-1$ and $\Lambda_{s,j} = 1 \otimes \dots \otimes 1 \otimes a_s^j
- a_s^j \otimes 1 \dots \otimes 1$, where $A_i^j= \left\{ \begin{array}{rcl} a_i^j & \mbox{if} & j >k \\ \langle a_i^j \rangle & \mbox{if} & j \leq k \end{array} \right.$ 

We will show that the product $\prod_{i=1}^s \prod_{j=1}^m \Lambda_{i,j}$ is non zero. Notice that this product contains the term $\pm a_1 \otimes a_2 \otimes \dots \otimes a_s \pm a_s \otimes a_1 \otimes \dots \otimes a_{s-1}$. Let $a_i = c_i\smile d_i$ where $c_i = \langle a_i^1 \rangle \smile \langle a_i^2 \rangle \smile \dots \smile \langle a_i^k \rangle$ and $d_i =  a_i^{k+1} \smile \dots \smile a_i^m$. By Lemma \ref{cam} and Theorem \ref{pr} each cell $c_i$ has a unique factorization and thus can not be created using different factors. This means that any term $b=b_1 \otimes \dots \otimes b_s$ as in (\ref{bb}) containing the term $\pm a_1 \otimes \dots \otimes a_s$ must be such that each $b_i$ contains the term $c_i$ as a factor, otherwise we would obtain an other factorization of $c_i$ which is impossible. Thus $b_i= c_i \smile g_i$ for some $g_i$.
This means that $d_1 \otimes \dots \otimes d_s$ appears as a term of the product $g_1 \otimes\dots \otimes g_s$ which, by Proposition \ref{Gb} is impossible.
\end{proof}
In what remains of this section we shall assume $T$ is a tree with $m$ essential vertices and $k$ vertices of degree three thus $k \leq m$.
 
 For a vertex $x \in V(T)$ let $C_j(x)$ be the connected components of $T- \{x\}$ for $j=0, 1, \dots , d(x)-1$.

\begin{teo}\label{mayorj}
Let $n= 2(m-\lfloor \frac{k}{s} \rfloor +l)+ \varepsilon$ with $ l < \lceil \frac{k}{s} \rceil$ and $\varepsilon\in \{0,1\}$. Assume there exists a set $V$ of $l$ vertices having degree three such that: 
\begin{itemize}
\item The remaining $k-l$ vertices of degree three can be separated into  $s$ sets $U_1, \dots , U_s$ with $ \bigcap_{i=1}^s U_i=\emptyset$ where each set $U_i$ has cardinality $\lfloor \frac{(s-1)k}{s} \rfloor$ for $1 \leq i \leq s$.
\item For every $ y \in V$ there exists at least one $ j \in \{0,1,2\}$ and at least two indices $ i_1, i_2 \in \{1,2 ,\dots ,s \}$ such that $|C_j(y) \cap U_{i_1}| \neq |C_j(y) \cap U_{i_2}|$.
\end{itemize}
Then $TC_s(U\mathcal{D}^nT) \geq s(m-\lfloor \frac{k}{s} \rfloor +l)$.
\end{teo}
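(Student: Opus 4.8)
The strategy is to exhibit $s$ critical cells in $U\mathcal{D}^nT$ (or rather a collection of $ms$ critical $1$-cells organized into $s$ products) satisfying the hypotheses of Lemma~\ref{bueno}, and then invoke that lemma. Write $k = sq + r$ with $0 \le r < s$, so that $\lfloor k/s\rfloor = q$ and $\lceil k/s \rceil = q+1$ (assume $r>0$; the case $r=0$ is similar with $\lceil k/s\rceil = q$). First I would handle the $l$ vertices of $V$: for each $y\in V$, the last hypothesis gives a direction $j\in\{0,1,2\}$ along which the intersections $|C_j(y)\cap U_i|$ are not all equal, and this is exactly the kind of data needed to build, via $\langle k'|y,p,q\rangle$-type vertices of the simplicial complex $K_nO(T)$, a factor that differs among the $s$ products $a_1,\dots,a_s$ — more precisely one arranges that the edge-ingredient $(y, y+1)$ is common to all $s$ products while the surrounding blocked vertices vary so that $a_i^{\,y}\neq a_l^{\,y}$ for suitable $i,l$. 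These $l$ common-edge factors play the role of the first $k$ factors in Lemma~\ref{bueno} (the lemma's ``$k$'' will be our $l$), and the uniqueness of factorization (Theorem~\ref{pr}) is transported from $O(T)$ to $T$ through Lemma~\ref{cam}, which applies precisely because $\iota(E(a_i^{\,y}))$ has degree three.

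Next I would account for the $m - q$ remaining factors per product, which must have pairwise-distinct edge-ingredients so that $\bigcap_i \bigcap_{j>l} E(a_i^j) = \emptyset$. Using the $k-l$ degree-three vertices split into $U_1,\dots,U_s$ with empty common intersection, together with the $m-k$ essential vertices of degree $\ge 4$, one builds — exactly as in the proof of Theorem~\ref{par,s} — for each $i$ a critical cell $c_i$ whose edge-ingredients on the degree-$\ge 4$ vertices alternate between the ``$(x_t,z_t),w_t$'' and ``$(x_t,u_t),z_t$'' patterns according to the parity of $i$, and whose edge-ingredients on the degree-three vertices are the blocks $B(y)$ for $y\in U_i$. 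Since $\bigcap U_i = \emptyset$ and the alternation by parity makes the degree-$\ge 4$ edge-ingredients differ between consecutive indices, the family $\{c_i\}$ has no edge-ingredient common to all $s$ of them; combined with the $V$-factors, this gives the third bullet of Lemma~\ref{bueno}, and the fourth bullet (for every $j$ some two of the $a_i^j$ differ) follows from the parity alternation on the $U_i$/degree-$\ge 4$ factors and from the $V$-construction on the common factors. A dimension count shows each product has exactly $m - q + l$ edge-ingredients (it is here that one uses $l < \lceil k/s\rceil$ to guarantee $n = 2(m - q + l) + \varepsilon$ is large enough — i.e., that $\sum$ of blocked vertices needed is at most $n-\,$(number of edges), so the cells actually fit in $U\mathcal{D}^nT$), and one adds the root vertex $\{0\}$ to each cell when $\varepsilon = 1$.

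Having verified all four hypotheses of Lemma~\ref{bueno} with its parameter ``$m$'' equal to our $m - q + l$ and its parameter ``$k$'' equal to our $l$, the lemma yields $TC_s(U\mathcal{D}^nT) \ge s(m - q + l) = s(m - \lfloor k/s\rfloor + l)$, as claimed. For the second bullet of Lemma~\ref{bueno} — that each of the $s$ products $a_i^1\smile\cdots\smile a_i^m$ is a \emph{strong} interaction product — one checks that the interaction parameters $\mathcal{R}_0,\mathcal{P}_{i,\ell}$ are nonnegative with each $\mathcal{P}_i > 0$; this is where the subdivision hypothesis (the tree is ``sufficiently subdivided for $n$'') and the distribution of pruned leaves matter, and it reduces to the same kind of bookkeeping already carried out in Example~\ref{para} and in the proof of Theorem~\ref{par,s}.

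\textbf{Main obstacle.} I expect the hard part to be the bookkeeping that simultaneously achieves three things on the \emph{same} family of $1$-cells: (i) the first $l$ edge-ingredients are shared by all $s$ products \emph{and} sit at degree-three vertices, so Lemma~\ref{cam} and the exterior-face-ring structure of $O(T)$ can be used; (ii) the differences forced by the second bullet of the theorem's hypothesis ($|C_j(y)\cap U_{i_1}| \ne |C_j(y)\cap U_{i_2}|$) are translated into genuine inequalities $a_{i_1}^{\,y} \ne a_{i_2}^{\,y}$ among the shared-edge factors — the subtlety being that a shared edge-ingredient still leaves freedom in \emph{how many} vertex-ingredients are blocked in each direction around $y$, and one must spend the ``extra'' points coming from $C_j(y)$ to realize the inequality; and (iii) every product remains of the prescribed maximal dimension $m - \lfloor k/s\rfloor + l$ with exactly $n$ points total, which pins down the number of blocked vertices and forces the constraint $l < \lceil k/s\rceil$. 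Getting a single explicit assignment of the $ms$ critical $1$-cells that meets all of (i)–(iii) at once, while keeping every product a strong interaction product, is the crux; once the cells are written down, the rest is an appeal to Lemma~\ref{bueno}.
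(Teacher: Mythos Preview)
Your proposal is correct and follows essentially the same route as the paper: construct the $s$ critical cells $c_i$ by combining parity-alternating edge data at the degree-$\ge 4$ vertices with the common blocks $B(V)$ and the varying blocks $B(U_i)$, factor each $c_i$ via Theorem~\ref{fact}, and then verify the four hypotheses of Lemma~\ref{bueno} exactly as you outline (with the lemma's ``$m$'' equal to $m-\lfloor k/s\rfloor+l$ and its ``$k$'' equal to $l$). The paper's proof is slightly more terse---it writes down the cells $c_i$ directly rather than first discussing the $\langle k'|y,p,q\rangle$-factors---but the verification of each bullet of Lemma~\ref{bueno} matches your plan point for point.
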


\begin{proof}
Assume first that $\varepsilon=0$.
Let $x_1, \dots x_{m-k}$ be the essential vertices of degree greater than three and let $y_{1}, \dots, y_k$ be the vertices of degree exactly three.
Let $B(U_i)= \{B(u): u \in U_i\}$ and $B(V)= \{B(v): v \in V\}$ for $ 1 \leq i \leq s$.  Assume that $w_i, z_i, v_i \in N(x_i)$ for $i= 1, \dots ,m-k$, with $x_i<w_i<z_i<v_i$.

Then $$c_i= \{ (x_1,v_1), z_1, (x_2,v_2),z_2, \dots, (x_{m-k},v_{m-k}), z_{m-k},B(V), B(U_i)\}$$ for $i$ even with $ 1< i \leq s$ and 
 $$c_j= \{ (x_1,z_1), w_1, (x_2,z_2),w_2, \dots, (x_{m-k},z_{m-k}), w_{m-k},B(V), B(U_j) \}$$ for $j$ odd with $1 \leq j \leq s$, are $s$ cells of dimension $m$.
If $\varepsilon=1$ we take the cells $c'_l = c_l \cup \{0\}$ for $ 1 \leq l \leq s$.  
 
By Theorem \ref{fact}, each cell $c_i$ is the strong interacion product of $m$ 1-cells $a_i^1, \dots, a_i^m$, where each factor is as described in the discussion following Theorem \ref{fact}.
 Let $a_i^j$ be the factor which contains the edge-ingredient $(x_j, v_j) $ or $(x_j, z_j)$ (if $i$ is even or odd respectively)  for $1 \leq j \leq m-k$, and let $a_i^{m-k+1}, \dots, a_i^{m-k+l}$ be the factors which contain an edge-ingredient inciding in a vertex of $V$, and $a_i^{m-k+l+1}, \dots, a_i^m$ be the factors which contain an edge-ingredient inciding in a vertex of $U_i$ for $ 1 \leq i \leq s$.
 
Notice first that for $j \in \{m-k+1, \dots, m-k+l\}$ and $i, g \in \{1, \dots, s\}$ we have that $E(a_i^j) = E(a_g^j)$ and $ \iota (E(a_i^j)) \in V$.

Now, since $\bigcap_{i=1}^s \bigcap_{j=1}^{m-k} E(a_i^j) = \bigcap_{i=1}^s (x_i, v_i) \cap (x_i, z_i) =\emptyset ,$ and $\bigcap_{i=1}^s U_i=\emptyset$, we have that $$\bigcap_{i=1}^s \bigcap_{\substack{j\leq m-k, \\ j>m-k+l}}^{m}  E(a_i^j)= \emptyset.$$ 

Finally, for $v \in V$ there exists $g \in \{0,1,2\}$ and $i, l \in \{1, \dots,  s\}$ such that $|C_g(v) \cap U_i| \neq | C_g(v) \cap U_l|$ thus $ a_l^j \neq a_i^j$ where  $ v = \iota (E(a_i^j))$.

This means that the factors $a_i^1, \dots, a_i^m$ satisfy the hypothesis of Lemma \ref{bueno} for $ 1 \leq i \leq s$.
\end{proof}
 
\begin{coro} 
 The bound obtained in Theorem \ref{mayorj} is sharp
 \end{coro}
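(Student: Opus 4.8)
The plan is to mirror verbatim the argument that established sharpness for Theorem~\ref{par,s}: the lower bound $s(m-\lfloor\frac{k}{s}\rfloor+l)$ provided by Theorem~\ref{mayorj} should be matched by the dimensional upper estimate of Proposition~\ref{cota}. Since $U\mathcal{D}^nT$ is aspherical it is $0$-connected, so Proposition~\ref{cota} reads $\TC_s(U\mathcal{D}^nT)\leq s\cdot\hdim(U\mathcal{D}^nT)$, and everything reduces to computing $\hdim(U\mathcal{D}^nT)$ under the standing hypotheses.

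First I would record the arithmetic: the assumption $l<\lceil\frac{k}{s}\rceil$ forces $l\leq\lfloor\frac{k}{s}\rfloor$, hence $m-\lfloor\frac{k}{s}\rfloor+l\leq m$, and therefore $n=2(m-\lfloor\frac{k}{s}\rfloor+l)+\varepsilon$ with $\varepsilon\in\{0,1\}$ satisfies $\lfloor n/2\rfloor=m-\lfloor\frac{k}{s}\rfloor+l\leq m$. Next I would invoke the standard computation of the homotopy dimension for (sufficiently subdivided) trees: in the Farley--Sabalka Morse complex every critical edge-ingredient is incident to an essential vertex and consumes at least two of the $n$ points (the edge together with a blocking vertex), so $\hdim(U\mathcal{D}^nT)=\min\{m,\lfloor n/2\rfloor\}$. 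In our range this equals $\lfloor n/2\rfloor=m-\lfloor\frac{k}{s}\rfloor+l$, whence Proposition~\ref{cota} gives $\TC_s(U\mathcal{D}^nT)\leq s(m-\lfloor\frac{k}{s}\rfloor+l)$. Combined with the reverse inequality of Theorem~\ref{mayorj}, this yields equality and the bound is sharp.

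The only delicate point — and it is minor — is the odd case $\varepsilon=1$, where one must check that $\hdim(U\mathcal{D}^nT)=\lfloor n/2\rfloor$ is still attained; this is seen by observing that the extra point in the top-dimensional critical cells constructed in the proof of Theorem~\ref{mayorj} sits in a blocked position near the root (the ingredient $\{0\}$), so it does not raise the number of edge-ingredients, and those cells realize the homotopy dimension. Everything else is the routine bookkeeping above, exactly as in the corollary following Theorem~\ref{par,s}.
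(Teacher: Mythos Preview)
Your proposal is correct and follows essentially the same route as the paper: apply the upper bound $\TC_s(U\mathcal{D}^nT)\leq s\cdot\hdim(U\mathcal{D}^nT)$ from Proposition~\ref{cota}, observe that the hypothesis $l<\lceil k/s\rceil$ forces $\lfloor n/2\rfloor=m-\lfloor k/s\rfloor+l\leq m$, and conclude $\hdim(U\mathcal{D}^nT)=\lfloor n/2\rfloor$, so the upper and lower bounds coincide. If anything, you are more careful than the paper, which writes $n<2m$ and $\hdim=n/2$ without separating out the parity; your handling of $\varepsilon=1$ via $\lfloor n/2\rfloor$ is the cleaner formulation.
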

 \begin{proof}
 By Lemma \ref{cota}, $TC_s (U\mathcal{D}^nT) \leq s(h dim (U\mathcal{D}^nT))$ and notice that since $n= 2(m-\lfloor \frac{k}{s} \rfloor +l)+ \varepsilon$ with $ l < \lceil \frac{k}{s} \rceil$,  we have $n < 2m$ and $h dim (U\mathcal{D}^nT)= \frac{n}{2}$ thus $TC_s (U\mathcal{D}^nT) \leq s\frac{n}{2}.$ 
 \end{proof}
 
\begin{ejem}\label{ejgris}
Take the tree $T$ of Figure \ref{gris} and assume every edge is sufficiently subdivided for $n=14$. Since $m= 8$, $k=7$, $l=1$ and $s=3$,  we can see in Figure \ref{gris} the set $V$ consisting of the black vertex, the set $U_1 \cap U_2$ consisting of light gray vertices, the set $U_2 \cap U_3$ consisting of gray vertices and the set $U_3 \cap U_1$ consisting of dark gray vertices.
By Theorem \ref{mayorj}, $TC_3(U\mathcal{D}^{14}T) \geq 3(8-2+1)=21$.
\end{ejem}

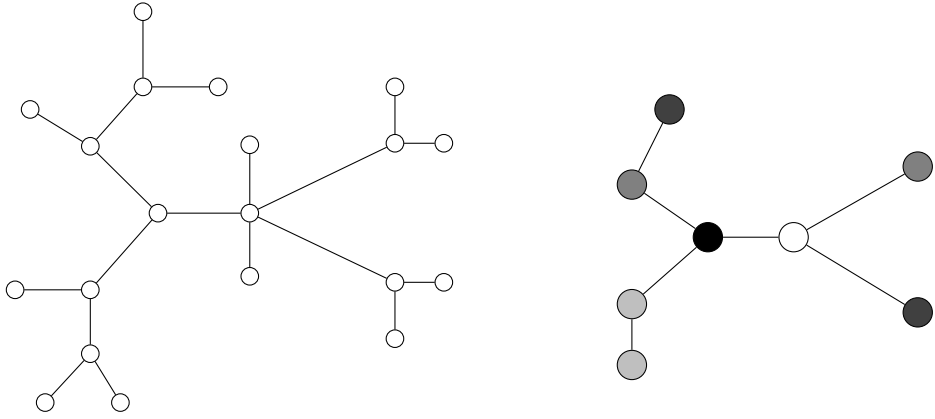
\begin{figure}
\centering
\begin{tikzpicture}[every node/.style={circle, draw}, scale=1.0,
rotate = 180, xscale = -1]

\node[scale=.6] (1) at ( 2.2, 2.68) {};
\node[scale=.6] (2) at ( 1.3, 1.79) {};
\node[scale=.6] (3) at ( 0.5, 1.3) {};
\node[scale=.6] (5) at ( 2, 1) {};
\node[scale=.6] (05) at (2,0){};
\node[scale=.6] (50) at (3,1){};
\node[scale=.6] (6) at ( 1.3, 3.7) {};
\node[scale=.6] (7) at ( 0.3, 3.7) {};
\node[scale=.6] (8) at ( 1.3, 4.55) {};
\node[scale=.6] (9) at ( 0.7, 5.2) {};
\node[scale=.6] (10) at ( 1.7, 5.2) {};
\node[scale=.6] (11) at ( 3.42, 2.68) {};
\node[scale=.6] (12) at ( 3.42, 1.77) {};
\node[scale=.6] (13) at ( 3.42, 3.52) {};
\node[scale=.6] (15) at ( 5.35, 3.6) {};
\node[scale=.6] (16) at ( 5.35, 4.35) {};
\node[scale=.6] (17) at ( 6, 3.6) {};
\node[scale=.6] (18) at ( 5.35, 1.75) {};
\node[scale=.6] (19) at ( 5.35, 1) {};
\node[scale=.6] (20) at ( 6, 1.75) {};

\node[fill=darkgray] (21) at ( 9, 1.3) {};
\node[fill=gray] (22) at ( 8.5, 2.3) {};
\node[fill=black] (23) at ( 9.51, 3.0) {};
\node[fill=lightgray] (24) at ( 8.5, 3.89) {};
\node[fill=lightgray] (25) at ( 8.5, 4.7) {};
\node (26) at ( 10.65, 3.0) {};
\node[fill=gray] (28) at ( 12.3, 2.06) {};
\node (29)[fill= darkgray] at ( 12.3, 4) {};

\draw (5)--(05);
\draw (5)--(50);
\draw (2) -- (1);
\draw (3) -- (2);
\draw (5) -- (2);
\draw (6) -- (1);
\draw (7) -- (6);
\draw (8) -- (6);
\draw (9) -- (8);
\draw (10) -- (8);
\draw (11) -- (1);
\draw (11) -- (13);
\draw (12) -- (11);
\draw (19) -- (18);
\draw (20) -- (18);
\draw (18) -- (11);
\draw (15) -- (11);
\draw (16) -- (15);
\draw (17) -- (15);
\draw (23) -- (26);
\draw (24) -- (23);
\draw (25) -- (24);
\draw (22) -- (23);
\draw (21) -- (22);
\draw (26) -- (29);
\draw (28) -- (26);

\end{tikzpicture}
\caption{The tree $T$ and a colouring of the vertices of $F(T)$. }
\label{gris}
\end{figure}

Let $F(T)$ denote the tree obtained from $T$ by removing all leaves and smoothing all bivalent vertices.

\begin{defi}
Given a tree $T$, let $h_s(T)$ denote the minimal integer such that there exist $s$ sets consisting of vertices which have degree three, $V_1, \dots, V_s$ such that:
\begin{itemize}
\item For every set $ |V_i| \leq h_s(T)$ for $ 1 \leq i \leq s$.
\item Every vertex that has degree one in $F(T)$ and degree three in $T$ belongs to a set $V_i$ with $1 \leq i\leq s$.
\item For every vertex $ x $ of degree greater than one in $F(T)$ and degree three in $T$, there exists $ j \leq d(x)-1$ and at least two indices $ i_1, i_2$ such that $$ |C_j(x) \cap V_{i_1} | \neq | C_j(x) \cap V_{i_2}|.$$
\end{itemize}
\end{defi}

Let $V$ be the set of vertices that have degree three in $T$ and degree one in $F(T)$. Notice that $h_s(T)$ always exists, since we can put $V_i =V$ for $i = 1, \dots, s-1$ and $ V_s = \emptyset$ so that for every essential vertex $x$ in $F(T)$ having degree three in $T$, we have that there exists $j$ such that $0=|C_j(x) \cap V_s| \neq |C_j (v) \cap V_i|$ for some $1 \leq i <s$. Obviously this process is not optimal, since in  this case the sets $V_i = V$ are very large.

\begin{teo}\label{tercero}
Let $n \geq 2m+h_s(T)$. Then $TC_s(U\mathcal{D}^nT) \geq sm$.
\end{teo}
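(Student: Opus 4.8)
The plan is to reduce Theorem~\ref{tercero} to an application of Lemma~\ref{bueno} by exhibiting, for $n\geq 2m+h_s(T)$, a suitable family of $ms$ critical $1$-cells in $U\mathcal{D}^nT$. First I would fix sets $V_1,\dots,V_s$ of vertices of degree three realizing $h_s(T)$, so $|V_i|\le h_s(T)$, every leaf of $F(T)$ of degree three in $T$ lies in some $V_i$, and for every branching vertex $x$ of $F(T)$ of degree three there is a direction $j$ and two indices with $|C_j(x)\cap V_{i_1}|\ne|C_j(x)\cap V_{i_2}|$. Since $F(T)$ has all its leaves among the degree-three vertices of $T$, this colouring data ``sees'' every essential vertex of $F(T)$, hence every essential vertex of $T$, in the sense needed by the last hypothesis of Lemma~\ref{bueno}. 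The budget $n\geq 2m+h_s(T)$ is what allows us to place two blocked vertices per essential vertex (the $2m$ part, exactly as in the cells $c_i$ of Theorem~\ref{par,s} and Theorem~\ref{mayorj}) plus an extra block $B(v)$ for each $v\in V_i$ (at most $h_s(T)$ of them), and still have enough points; the parity slack $\varepsilon$ is absorbed by adjoining the root vertex $0$ exactly as before.

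Concretely I would list the essential vertices of $T$ as $x_1<\dots<x_m$, and for each $x_i$ choose neighbours with $x_i<w_i<z_i$ (and $v_i$ larger still when $d(x_i)>3$), and set, for odd $t$,
$$
c_t=\{(x_1,z_1),w_1,\dots,(x_{m-k},z_{m-k}),w_{m-k},B(V_t),\ \text{degree-three blocks}\},
$$
and for even $i$ the analogous cell using $(x_j,v_j),z_j$ in the higher-degree positions, mimicking the construction in the proof of Theorem~\ref{mayorj} but now building the full family indexed by $t=1,\dots,s$ rather than alternating between two cells. Each $c_t$ is a critical $m$-cell, hence by Theorem~\ref{fact} factors uniquely-for-binary-purposes into $m$ critical $1$-cells $a_t^1\smile\cdots\smile a_t^m$; I would label the factors so that $a_t^1,\dots,a_t^{k'}$ (for an appropriate $k'$) are the ones whose edge-ingredient incides in a degree-three vertex that is common to all $s$ cells, and the remaining factors carry the ``distinguishing'' edges coming from the higher-degree vertices $x_j$ and from the colouring-sensitive directions guaranteed by $h_s(T)$. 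One checks the four bullet points of Lemma~\ref{bueno}: the first because the shared degree-three blocks give $E(a_i^j)=E(a_l^j)$ with $\iota(E(a_i^j))$ of degree three; the second because each $c_t$ is by construction a strong interaction product; the third because $\bigcap_t (x_i,z_i)\cap(x_i,v_i)=\emptyset$ together with $\bigcap_i V_i=\emptyset$ forces the remaining edges not to have a global common element; and the fourth precisely from the $h_s(T)$-condition $|C_j(x)\cap V_{i_1}|\ne|C_j(x)\cap V_{i_2}|$, which by the recipe following Theorem~\ref{fact} makes the corresponding $1$-cell factors differ. Lemma~\ref{bueno} then yields $TC_s(U\mathcal{D}^nT)\ge sm$.

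The main obstacle I anticipate is the bookkeeping needed to guarantee that the chosen cells $c_t$ are genuinely critical $m$-cells of $U\mathcal{D}^nT$ and that their $1$-cell factorizations align index-by-index so that Lemma~\ref{bueno}'s hypotheses hold simultaneously for all $s$ cells: in particular, one must verify that the degree-three factors that are meant to be ``shared'' really do coincide across all $t$ (this uses that a vertex of degree three admits a unique possible critical edge, so the block $B(v)$ determines the factor up to the number of vertices blocked at the root, which the uniform construction keeps constant), while the factors that are meant to ``distinguish'' really do differ for some pair of indices --- and here one has to be careful that adding the blocks $B(V_t)$ does not accidentally change an interaction parameter in a way that breaks strong interaction (this is where $l<\lceil k/s\rceil$-type slack, now replaced by the hypothesis $n\ge 2m+h_s(T)$, is used, paralleling the role of $l$ in Theorems~\ref{par,s} and~\ref{mayorj}). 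The case analysis when $k<s$ or when $m-k$ higher-degree vertices are present should be handled exactly as in the earlier proofs, padding with repeated cells $c_1^1,\dots,c_1^{s-s'}$ as in Theorem~\ref{par,s}. Once the construction is in place, everything else is an invocation of Theorem~\ref{fact}, Lemma~\ref{cam}, Theorem~\ref{pr} and Lemma~\ref{bueno}, so the proof is short modulo this combinatorial setup.
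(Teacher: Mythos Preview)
Your overall strategy---build $s$ critical $m$-cells and feed their factorizations into Lemma~\ref{bueno}---is exactly the paper's. But the construction you sketch misidentifies what the extra $h_s(T)$ ingredients are, and this breaks the verification of Lemma~\ref{bueno}'s hypotheses.

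In the paper, \emph{every} cell $c_i$ contains \emph{all} $k$ degree-three blocks $B(y_1),\dots,B(y_k)$; the $s$ cells therefore share all $k$ degree-three edge-ingredients, and the ``$k$'' in the first bullet of Lemma~\ref{bueno} is literally the number of degree-three essential vertices. The $h_s(T)$ leftover points are used not as blocks but as \emph{single} extra vertex-ingredients: one sets $U_i=\{y+2:y\in V_i\}$ and adjoins $U_i$ to $c_i$. These extra vertices do not create any new edge-ingredients; what they do is shift the counts $p,q$ in the $1$-cell factor at each degree-three vertex $y_j$, so that whenever $|C_g(y_j)\cap V_{i_1}|\neq|C_g(y_j)\cap V_{i_2}|$ the factors $a_{i_1}^j$ and $a_{i_2}^j$ differ even though their edge-ingredient is the same. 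This is precisely how the fourth bullet of Lemma~\ref{bueno} is obtained for the degree-three factors, and it is why the hypothesis is $n\geq 2m+h_s(T)$ rather than $n\geq 2m+2h_s(T)$.

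Your cells instead carry a variable set of blocks $B(V_t)$ (together with some unspecified ``degree-three blocks''), i.e.\ you are transplanting the mechanism of Theorem~\ref{mayorj} rather than discovering the new one. Two things then fail. First, the arithmetic: a block contributes two ingredients, so ``an extra block $B(v)$ for each $v\in V_i$'' costs $2|V_i|$, not $|V_i|$, and if the common ``degree-three blocks'' already cover all $y_j$ then $B(V_t)$ is redundant, while if they do not the cell has dimension $<m$. Second, and more seriously, your check of the third bullet relies on $\bigcap_i V_i=\emptyset$, which is nowhere in the definition of $h_s(T)$; the $V_i$ may well overlap. In the paper's construction there is no such issue, because the degree-three edges are all common (so the third bullet concerns only the $m-k$ high-degree edges, handled by the even/odd alternation $(x_j,v_j)$ versus $(x_j,z_j)$), while the $V_i$ enter only through the vertex counts that feed the fourth bullet. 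Once you replace ``extra blocks $B(v)$'' by ``extra single vertices $y+2$ for $y\in V_i$'' and include all $k$ degree-three blocks in every $c_t$, your outline becomes the paper's proof.
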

\begin{proof}
Assume first that $n = 2m+h_s(T)$.
Let $x_1, \dots, x_{m-k}$ be the vertices of degree greater than three and let $y_{1}, \dots, y_k$ be the vertices of degree three.  Assume that $w_i, z_i, v_i \in N(x_i)$ for $i= 1, \dots m-k$, with $x_i<w_i<z_i<v_i$.
Let $U_i = \{ y_i+2: y_i \in V_i\}$ for $ 1 \leq i \leq s$. 

Then $$c_i= \{ (x_1,v_1), z_1, (x_2,v_2),z_2, \dots, (x_{m-k},v_{m-k}), z_{m-k},B(y_1), \dots, B(y_k), U_i\}$$ for $i$ even with $ 1< i \leq s$ and 
 $$c_j= \{ (x_1,z_1), w_1, (x_2,z_2),w_2, \dots, (x_{m-k},z_{m-k}), w_{m-k},B(y_1), \dots, B(y_k), U_j \}$$ for $j$ odd with $1 \leq j \leq s$ are $s$ m-cells.  If for some $1 \leq i \leq s$ we have that $|V_i| < h_s(T)$ then we can add to the cell $c_i$, vertices blocked at the origin. 
By Theorem \ref{fact} every cell $c_i$ for $1 \leq i \leq s$ is the strong interaction product of $m$ 1-cells $a_i^1, \dots, a_i^m$. Let $a_i^j$ be the factor which contains the edge-ingredient $(x_j, v_j) $ or $(x_j, z_j)$ (if $i$ is even or odd respectively)  for $1 \leq j \leq m-k$, and let $a_i^{l+m-k}$ be the factor which contains $B(y_{l})$ for $ 1 \leq l \leq k$

We have that $E(a_i^j) = E(a_l^j)$ for $m-k < j \leq m$ and $\iota (E(a_i^j))= y_j$ with $y_j$ a vertex of degree three. Notice also that $$\bigcap_{i=1}^s \bigcap_{j=1}^{m-k} E(a_i^j) = \bigcap_{i=1}^s (x_i, v_i) \cap (x_i, z_i) =\emptyset $$.

Finally consider the cells $a_i^j$ for $1 \leq i \leq s$ which contain $B(y_j)$. Since $y_j$ is a vertex of degree three, there exist $i,l \in \{1, \dots, s\}$ and $ g \in \{0,1,2\}$ such that $|C_g (y_j)\cap V_i| \neq  |C_g(y_j) \cap V_l|$ thus $ a_i^j \neq  a_l^j$.
 
Thus the factors $a_i^1, \dots, a_i^m$ satisfy the hypotheisis of Lemma \ref{bueno} $1 \leq i \leq s$ and hence $TC_s(U\mathcal{D}^nT) \geq sm$.

If $n > 2m+h_s(T)$ we can add to every cell $c_i \in U\mathcal{D}^{2m+h_s(T)}T$, $n-(2m +h_s(T))$ blocked vertices at the origin to obtain $c'_i \in U\mathcal{D}^nT. $
\end{proof}
 
\begin{coro}
The bound in Theorem \ref{tercero} is sharp.
\end{coro}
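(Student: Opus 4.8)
The plan is to sandwich $TC_s(U\mathcal{D}^nT)$ between the upper bound of Proposition~\ref{cota} and the lower bound just established in Theorem~\ref{tercero}. The space $U\mathcal{D}^nT$ is path-connected (it is homotopy equivalent to $U\mathcal{C}^nT$ by Theorem~\ref{teoa} once $T$ is sufficiently subdivided, which we are assuming), so applying Proposition~\ref{cota} with $c=0$ gives $TC_s(U\mathcal{D}^nT)\le s\,\hdim(U\mathcal{D}^nT)$. It therefore suffices to prove that $\hdim(U\mathcal{D}^nT)=m$ whenever $n\ge 2m+h_s(T)$; combined with Theorem~\ref{tercero} this forces $TC_s(U\mathcal{D}^nT)=sm$, i.e.\ the bound is sharp.

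To pin down $\hdim(U\mathcal{D}^nT)$ I would first bound it above by $m$. Since the Morse differential of the Farley--Sabalka field on $U\mathcal{D}^nT$ vanishes \cite{df}, discrete Morse theory presents $U\mathcal{D}^nT$, up to homotopy, by a CW complex whose cells are precisely the critical cells; hence $\hdim(U\mathcal{D}^nT)$ equals the largest dimension in which a critical cell occurs. By Theorem~\ref{fact} every critical $m'$-cell is the strong interaction product of $m'$ critical $1$-cells, and the edge-ingredients of these $m'$ factors incide at $m'$ pairwise distinct essential vertices of $T$ (this is exactly the structure recorded by the notation $\{k|x_1,\mathcal{P}_1,\mathcal{Q}_1|\cdots|x_m,\mathcal{P}_m,\mathcal{Q}_m\}$). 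As $T$ has only $m$ essential vertices, $m'\le m$.

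For the reverse inequality I would simply observe that, because $n\ge 2m+h_s(T)\ge 2m$, there is enough room to put one critical edge at every essential vertex: the cells $c_i$ constructed in the proof of Theorem~\ref{tercero} are critical $m$-cells, so by \cite{df} their duals represent nonzero classes in $H^m(U\mathcal{D}^nT)$, whence $\hdim(U\mathcal{D}^nT)\ge m$ and therefore $\hdim(U\mathcal{D}^nT)=m$. Combining with the first paragraph, $TC_s(U\mathcal{D}^nT)\le sm$, and together with Theorem~\ref{tercero} this yields $TC_s(U\mathcal{D}^nT)=sm$. The only point that is not a direct citation is the identification of $\hdim(U\mathcal{D}^nT)$ with the top dimension of a critical cell together with the fact that a critical cell carries at most one critical edge per essential vertex; once that structural observation is in hand (it is built into Theorem~\ref{fact}), the corollary is immediate.
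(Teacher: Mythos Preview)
Your argument is correct and follows the same route as the paper: apply Proposition~\ref{cota} to get $TC_s(U\mathcal{D}^nT)\le s\,\hdim(U\mathcal{D}^nT)$, then use $\hdim(U\mathcal{D}^nT)=m$ for $n\ge 2m+h_s(T)$ to conclude. The paper's proof simply asserts $\hdim(U\mathcal{D}^nT)=m$ in one line, whereas you spell out both inequalities (the upper bound via the structure of critical cells, the lower bound via the existence of the critical $m$-cells from Theorem~\ref{tercero}); this extra detail is fine but not a different strategy.
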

\begin{proof}
By Lemma \ref{cota}, $TC_s (U\mathcal{D}^nT) \leq s(h dim (U\mathcal{D}^nT))$ and notice that since $n \geq 2m+h_s(T)$, $h dim (U\mathcal{D}^nT)= m$ thus $TC_s (U\mathcal{D}^nT) \leq sm.$
\end{proof}
 
\section{Comparison with Schreirer's results} 
In this section we are going to compare our results with schreirer's results for the case when $s=2$. Since he uses very different tools, we must first introduce some definitions and notation.
We shall continue to assume that our tree $T$ is sufficiently subdivided (unless otherwise stated) and embedded in the plane.
Recall that an arc in $T$ is a subspace homeomorphic to a non-trivial closed interval. Given a finite collection
of oriented arcs $\{A_i \}^k{i=1} \in T$, and a vertex $v \in T$ of degree $d$, we will define
integers $\eta_0 (v), \eta_1 (v), \dots , \eta_{d-1} (v)$ as follows. First for $i= 1,2, \dots k$ and $j = 1, 2, \dots , d-1$, if $v$ lies on the arc $A_i$ and $A_i$ intersects the interior of the edge $e_j$ incident to $v$ on $v$-direction $j$, then $\eta_{j,i}(v)=1$ if $A_i$ is oriented towards $v$ on $e_j$ and $\eta_{j,i}(v)=-1$ if $A_i$ is oriented away from $v$ on $e_j$. If $v$ does not fall on $A_i$ or $A_i$ does not intersect the interior of $e_j$ then $\eta_{j,i}(v) =0$. Then let $$\eta_j (v) = \sum_{i=1}^k \eta_{j,i} (v).$$
We are now ready for the definition of an allowable collection of arcs.
\begin{defi}
Let $A=\{A_i\}_{i=1}^k $ be a collection of oriented arcs in $T$ and $U$ be a set of vertices of $T$. The collection $A$ is said to be allowable for $U$ if every vertex $v \in U$ of degree $d$ has the property that $v$ is not an endpoint of any $A_i$ and at least one of $\eta_0 (v), \eta_1 (v), \dots , \eta_{d-1} (v)$ is non zero.
\end{defi}
 Schreirer states the following theorem.
 \begin{teo}\cite{S} \label{sch}
 Let $T$ be a tree with $m$ essential vertices.
 \begin{enumerate}
 \item Let $p$ be the smallest integer such that there is a collection of oriented arcs $\{A_i\}_{i=1}^p$ which is allowable for the collection of all vertices of degree 3 in $T$. If there are no vertices of degree three, let $p=0$. Let $n \geq 2m+p$ be an integer. Then $TC(U\mathcal{D}^n(T))= 2m$
 \item Let $n = 2q+ \epsilon < 2m$, with $\epsilon \in \{0,1\}$ and $q \geq 1$, let $(m-k)$ be the number of vertices of degree greater than 3, and let $k$ be the number of vertices of degree three. Suppose one of these hold:
 \begin{itemize}
 \item[(a)] $k \geq 2(q-(m-k))$
 \item[(b)] \begin{enumerate}
 \item[(i)] $k <2(q-(m-k)), \epsilon=0$, and there is some $p \geq 1$ such that there exists a collection of oriented arcs $\{A_i\}_{i=1}^p$ with the following properties:
 \begin{itemize}
 \item[(A)] The endpoints of each $A_l$ are (distinct) essential vertices, neither of which is an endpoint of any other $A_l'$,
 \item[(B)] There are $r \leq m-k $ vertices of degree greater than 3 which are not the endpoints of any $A_l$,
 \item[(C)] There is a collection $U$ of vertices of degree 3-vertices, with $|U| \geq q-r-p$ such that $\{A_i\}_{i=1}^p$ is allowable for $U$.
 \end{itemize}
 \item[(ii)] $k< 2(q-(m-k)), \epsilon=1$, and there is an arc $A_0$ whose endpoints have no restrictions and whose interior includes a collection $W'$ of $s'\leq q$ distinct vertices of degree 3, and if $s<q-(m-k)$ there are arcs $A_1, \dots , A_p$, as above whose endpoints are also not vertices in $W'$, and there is an other collection of degree-3 vertices, $W$, such that $W \cap W'=\emptyset, |W|\geq q-r-p-s$ and $\{A_i\}_{i=1}^p$ is allowable for $W$, where $r$ is as above.
 \end{enumerate}
 \end{itemize}
 \end{enumerate}
 Then $TC(U\mathcal{D}^nT )= 2q$.
 \end{teo}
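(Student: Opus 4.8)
The plan is to obtain Theorem~\ref{sch} by specializing Theorems~\ref{par,s}, \ref{mayorj} and~\ref{tercero} (together with their sharpness corollaries) to the case $s=2$, after translating Scheirer's ``allowable collection of arcs'' language into the combinatorial data appearing in our hypotheses. Recall that $\TC=\TC_2$. Every conclusion of Theorem~\ref{sch} has the form $\TC(U\mathcal{D}^nT)=N$ with $N\in\{2m,2q\}$, and the inequality $\TC(U\mathcal{D}^nT)\le N$ is immediate from Proposition~\ref{cota}, since $\hdim(U\mathcal{D}^nT)=\min(m,\lfloor n/2\rfloor)$ (this is the computation of $\hdim$ used in our sharpness corollaries). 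So it only remains to produce the matching lower bound, and that is exactly what our theorems give once the hypotheses are matched.

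For Part~1 I would show that Scheirer's minimal $p$ coincides with $h_2(T)$. From an allowable collection $\{A_i\}_{i=1}^p$ for the set of all degree-$3$ vertices one builds two vertex sets $V_1,V_2$: for a degree-$3$ vertex $v$ with some $\eta_j(v)\ne0$ one uses the orientation of each arc through $v$ in direction $j$ to record a ``token'' on the near or far side of $v$; a net nonzero $\eta_j(v)$ then forces $|C_j(v)\cap V_{i_1}|\ne|C_j(v)\cap V_{i_2}|$, which is precisely the condition in the definition of $h_s(T)$, and conversely a pair $(V_1,V_2)$ realizing $h_2(T)$ produces $p$ arcs by joining suitably many tokens across each degree-$3$ vertex. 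Once $p=h_2(T)$ is established, Theorem~\ref{tercero} with $s=2$ gives $\TC(U\mathcal{D}^nT)\ge 2m$ for $n\ge 2m+h_2(T)=2m+p$, matching the upper bound.

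For Part~2 I would split on Scheirer's dichotomy. When $k\ge 2(q-(m-k))$, set $l=q-(m-k)$; then $l\le\lfloor k/2\rfloor$, and Theorem~\ref{par,s} with $s=2$ and $n=2(m-k+l)+\varepsilon=2q+\varepsilon$ gives $\TC(U\mathcal{D}^nT)\ge 2q$. When $k<2(q-(m-k))$ one re-parametrizes $q=m-\lfloor k/2\rfloor+l$ and matches Scheirer's conditions (A), (B), (C) (respectively their $\epsilon=1$ variant) with the hypotheses of Theorem~\ref{mayorj}: the $r$ degree-$>3$ vertices that are not arc-endpoints play the role of the $x_1,\dots,x_{m-k}$ feeding the high-degree factors, the arcs allowable for $U$ encode the partition $U_1,U_2$ with $U_1\cap U_2=\emptyset$ together with the ``unequal intersection with some component'' condition, and when $\epsilon=1$ the arc $A_0$ and the vertices $W'$ on it correspond to the distinguished set $V$; a leftover point, if present, is absorbed by the root-blocked vertex $\{0\}$ exactly as in the proofs of Theorems~\ref{par,s} and~\ref{mayorj}. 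Theorem~\ref{mayorj} then yields $\TC(U\mathcal{D}^nT)\ge 2q$, and the sharpness corollary closes the case.

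The main obstacle is precisely this dictionary. Scheirer records obstructions through oriented arcs and the integers $\eta_j(v)$, while our theorems record them as partitions of the degree-$3$ vertices into sets $V_i$ (or $U_i$) subject to an ``unequal intersection with some $C_j$'' condition; one must verify that an arc passing through $v$ in direction $j$ with a definite orientation is exactly the mechanism that, after factoring a cell into critical $1$-cells as in Theorem~\ref{fact}, produces two distinct $1$-cell factors at $v$ --- which is the last bullet of Lemma~\ref{bueno} --- and that the bookkeeping $n=2q+\epsilon$ versus $n=2(m-k+l)+\varepsilon$ or $n=2(m-\lfloor k/2\rfloor+l)+\varepsilon$, including the parity term, is consistent across the two formulations. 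Everything else is a routine specialization to $s=2$ together with the sharpness corollaries.
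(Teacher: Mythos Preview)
Theorem~\ref{sch} is not proved in this paper at all: it is quoted from \cite{S}, and Section~5 only \emph{compares} it with the author's Theorems~\ref{par,s}, \ref{mayorj}, \ref{tercero} at $s=2$. So there is no ``paper's own proof'' to match; your plan is really an attempt to \emph{re-derive} Scheirer's theorem from the results of this paper.

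For parts~(1) and~2(a) your plan is fine and is exactly what Section~5 establishes: Proposition~\ref{eq} shows $p=h_2(T)$, so Theorem~\ref{tercero} with $s=2$ gives part~(1); and the short paragraph before Proposition~\ref{eq} shows that the numerical conditions in 2(a) and in Theorem~\ref{par,s} (with $s=2$) coincide. The genuine gap is part~2(b). The paper proves only the implication ``hypotheses of Theorem~\ref{mayorj} $\Rightarrow$ hypotheses of Theorem~\ref{sch}~2(b)'' (this is the last proposition of Section~5). Your plan requires the \emph{opposite} implication --- starting from Scheirer's data $(A_i,r,U)$ (or $(A_0,W',A_i,W)$ when $\epsilon=1$) and producing the sets $V,U_1,U_2$ and the parameter $l<\lceil k/2\rceil$ demanded by Theorem~\ref{mayorj}. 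Nothing in the paper provides that reverse dictionary, and your sketch does not either: for instance, in 2(b)(i) Scheirer allows arc endpoints to be degree-$3$ vertices, and there is no reason the resulting $U_1,U_2$ should each have cardinality $\lfloor (s-1)k/s\rfloor=\lfloor k/2\rfloor$ as Theorem~\ref{mayorj} requires; and in 2(b)(ii) your identification of $W'$ with the set $V$ of Theorem~\ref{mayorj} is backwards, since $V$ in Theorem~\ref{mayorj} is the set of degree-$3$ vertices whose blocks appear in \emph{all} $s$ cells and must satisfy the ``unequal intersection'' condition, whereas Scheirer's $W'$ sits on a single arc $A_0$ playing a different role. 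In short, the paper itself stops at ``Theorem~\ref{mayorj} is a (possibly strict) special case of Scheirer's 2(b) when $s=2$'', so recovering all of 2(b) from Theorem~\ref{mayorj} is not available with the tools here.
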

 
We are going to focus first on case $2(a)$ and compare it to Theorem \ref{par,s}. Notice first that in Theorem \ref{par,s}, we require that $n = 2(m-k+l) + \epsilon$ where $k$ is the amount of vertices of degree three, $m-k$ is the amount of vertices of degree greater than three and $l \leq \frac{k}{2}$. Subsituting the hypothesis of Theorem \ref{par,s} $(q=m-k+l)$ in the innequality $k \geq 2(q-(m-k))$ of Theorem \ref{sch} $2(a)$, we obtain $ k \geq 2l$, thus both Theorem  \ref{par,s} and Theorem \ref{sch} $2(a)$ require the same conditions over $n$ with respect to the amount of vertices of degree three and the amount of vertices of degree greater than three.
Since both theorems do not have any aditional hypotheisis, we conclude that they are equivalent.

\

Consider now case $(1)$ of Theorem \ref{sch}. We are going to show that it is equivalent to Theorem \ref{tercero}.
We need to prove that the smallest amount of oriented arcs which are allowable for the set of degree-3 vertices is precisely $h_2(T)$.
 
 \begin{prop}\label{eq}
 Let $p$ be the smallest integer such that there is a collection of oriented arcs $\{A_i\}_{i=1}^p$ which is allowable for the collection of all vertices of degree 3 in $T$.
Then $p = h_2(T)$. 
 \end{prop}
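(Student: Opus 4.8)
The plan is to prove the equality $p = h_2(T)$ by two inequalities, translating between the "allowable collection of arcs" language of Scheirer and the "balanced cover by sets of degree-three vertices" language of the present paper. The key observation is that an oriented arc $A_i$ in $T$, together with a vertex $v$ of degree three lying in the interior of $A_i$, contributes a nonzero $\eta_j(v)$ for some direction $j$ precisely when $A_i$ "turns" at $v$ or passes straight through in a way that is detected at $v$; more importantly, for a degree-three vertex the condition "at least one $\eta_j(v)\neq 0$" can be re-encoded as a statement about how many arcs pass into each of the three components $C_0(v),C_1(v),C_2(v)$ of $T-\{v\}$. I would first set up this dictionary carefully: given a collection $\{A_i\}_{i=1}^p$, for each direction $j\in\{0,1,2\}$ at a degree-three vertex $v$ let $V_{j}$ (roughly) record the multiset of arcs entering $v$ from direction $j$; the allowability condition at $v$ says these three counts are not all equal in the relevant signed sense. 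Conversely, given sets $V_1,\dots,V_s$ (here $s=2$) witnessing $h_s(T)$, I would build arcs realizing the same imbalances.

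For the inequality $h_2(T)\le p$: starting from an allowable collection $\{A_i\}_{i=1}^p$, I would define, for each $i$, a set $V_i$ of degree-three vertices — essentially the degree-three vertices $v$ such that $A_i$ witnesses the imbalance at $v$ — chosen so that each $|V_i|\le p$ (in fact we get a cover by $p$ sets, each of size at most $p$ after possibly splitting/padding). The defining conditions of $h_2(T)$ must be checked: every degree-three leaf of $F(T)$ lies in some $V_i$ (because the arc passing through it detects it), and for every higher-degree vertex $x$ of $F(T)$ that has degree three in $T$, there is a direction $j$ and two indices with $|C_j(x)\cap V_{i_1}|\neq|C_j(x)\cap V_{i_2}|$ — this follows because the signed count $\eta_j(x)$ being nonzero forces an asymmetry in how the arcs are distributed among the components at $x$, and this asymmetry is exactly what distinguishes the $V_i$. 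For the reverse inequality $p\le h_2(T)$: starting from sets $V_1,\dots,V_s$ realizing $h_2(T)$, I would construct $h_2(T)$ oriented arcs — one arc "per element of each $V_i$", suitably concatenated along $T$ and oriented — so that the resulting collection is allowable for all degree-three vertices, using the imbalance hypotheses on the $V_i$ to guarantee some $\eta_j(v)\neq 0$ at each such $v$. One must take care that the arcs have their endpoints away from the relevant vertices and that the total count does not exceed $h_2(T)$; grouping the arcs so that at most $h_2(T)=\max_i|V_i|$ of them are "active" works because the sets $V_i$ can be processed in parallel.

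The main obstacle I expect is the bookkeeping in the correspondence between a single oriented arc (a connected subspace) and the combinatorial data of which component at each degree-three vertex it enters: an arc is a global object threading through the tree, whereas the sets $V_i$ are just collections of vertices, so realizing prescribed local imbalances at many vertices simultaneously with few arcs requires a careful inductive construction walking along $T$ (much in the spirit of the canonical walk fixed at the start of Section 2). In particular, showing $p\le h_2(T)$ — that $h_2(T)$ arcs genuinely suffice — is the delicate direction, since one must verify that the imbalance guaranteed at each degree-three vertex of $F(T)$ propagates to an honest nonzero $\eta_j$ after the arcs are assembled, and that degree-three leaves of $F(T)$ are automatically covered. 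The other direction, $h_2(T)\le p$, is comparatively routine once the dictionary is in place: one simply reads off the sets $V_i$ from the arcs and checks the three bullet conditions of the definition of $h_s(T)$ against the definition of allowability.
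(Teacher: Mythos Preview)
Your overall architecture (two inequalities, a dictionary between arcs and the sets $V_i$) is right, but you have misidentified the dictionary, and the version you sketch would not actually work.

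The crucial point you miss is that for $s=2$ there are exactly \emph{two} sets $V_1,V_2$, not $p$ of them. The correspondence the paper uses is far simpler than what you propose: given an allowable collection $\{A_i\}_{i=1}^p$, trim each arc so its endpoints are degree-three vertices and call the initial point $x_i$ and the terminal point $y_i$; then set $V_1=\{x_1,\dots,x_p\}$ and $V_2=\{y_1,\dots,y_p\}$. Both sets have size $p$, leaves of $F(T)$ (being degree-three vertices hit by some arc but not in its interior after trimming) land in $V_1\cup V_2$, and for an essential vertex $v$ of $F(T)$ of degree three the condition $|C_j(v)\cap V_1|=|C_j(v)\cap V_2|$ for all $j$ would say that in each direction the number of arc-starts equals the number of arc-ends, which forces every $\eta_j(v)=0$ and contradicts allowability. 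This gives $h_2(T)\le p$ directly. Your plan instead builds ``for each $i$ a set $V_i$'' yielding $p$ sets of size $\le p$; that does not match the definition of $h_2(T)$ and there is no obvious way to collapse $p$ such sets down to two while preserving the third bullet.

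For the reverse inequality, the same dictionary run backwards is what you need: pair the elements of $V_1$ and $V_2$ as $(x_i,y_i)$, take the unique $x_iy_i$-path in $T$, extend slightly at each end (to a leaf or a vertex of degree $>3$) so that $x_i,y_i$ lie in the interior, and orient from the $x_i$-side to the $y_i$-side. This produces exactly $h_2(T)$ arcs, and the imbalance $|C_j(v)\cap V_1|\neq|C_j(v)\cap V_2|$ at an essential $v$ of $F(T)$ translates immediately into $\eta_j(v)\neq 0$. Your description (``one arc per element of each $V_i$, suitably concatenated'') never singles out this pairing, and without it there is no control on the number of arcs produced. Once you see that the two endpoints of an oriented arc are the entire content of the translation, both directions become short.
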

 \begin{proof}
 Assume first that the hypothesis of Theorem \ref{tercero} hold.
 Let $U$ be the set of vertices of degree three in $T$. There exist $V_1, V_2 \subset U$ such that $|V_i| = h_2(T)$ for $i=1,2$ and all the leaves of $F(T)$ are contained in $V_1 \cup V_2$. Assume $V_1=\{x_1, \dots, x_{h_2(T)}\}$ and $V_2=\{y_1, \dots, y_{h_2(T)}\}$, and let $Z_i$ be the unique $x_iy_i$-path in $T$. Denote by $d_{F(T)} (v)$ the degree of a vertex $v$ in $F(T)$. We are going to extend these paths as follows, for $1 \leq i \leq h_2(T)$. To make the notation easier, we will assume for now that the tree is not subdivided, this is, it has no vertices of degree two.
If $d_{F(T)}(x_i)<3$ , let $a_i$ denote a leaf in $T$ adjacent to $x_i$, and similarly, if $d_{F(T)}(y_i)<3$, let $b_i$ denote a leaf in $T$ adjacent to $y_i$. If $x_i$ is an essential vertex in $F(T)$, let $G_i$ be a path with endpoints $x_i$ and $g_i$ with $g_i$ either a leaf in $T$ or a vertex of degree greater than three in $T$ such that $g_i \neq a_j, b_j$ for $j \neq i$.
If $y_i$ is an essential vertex in $F(T)$, let $D_i$ be a path with endpoints $y_i$ and $d_i$ with $d_i$ either a leaf in $T$ or a vertex of degree greater than three in $T$ such that $d_i \neq a_j, b_j$ for $i \neq j$.  Then we define the set of oriented arcs as follows:
$$ A_i = \left\{ \begin{array}{ccc} (a_i, x_i) \cup Z_i \cup (y_i, b_i) & \mbox{ if } & d_{F(T)}(x_i), d_{F(T)}(y_i) \in \{1,2\}\\
(a_i, x_i) \cup Z_i \cup D_i & \mbox{ if } & d_{F(T)}(x_i)<3, d_{F(T)}(y_i) \geq 3 \\
G_i \cup Z_i \cup (y_i, b_i) & \mbox{ if } & d_{F(T)}(x_i)\geq 3, d_{F(T)}(y_i) <3 \\
G_i \cup Z_i \cup D_i & \mbox{ if } & d_{F(T)}(x_i)\geq 3, d_{F(T)}(y_i) \geq 3 \\
\end{array} \right. $$
The orientation of these arcs is from $ a_i$ or $g_i$ to $b_i$ or $d_i$. Notice that if $x_i$ is such that $d_{F(T)}(x_i)<3$ and $(x_i, a_i )= e_j$ for some $j \in \{0, \dots, d_T(x_i)-1\}$ then $\eta_j (x_i) =1$ and analogusly we have that $\eta_j (y_i) = -1$ if $d_{F(T)}(y_i)<3$. For $v \in U$ an essential vertex of $F(T)$, by the definition of $h_2(T)$ there exists $j \in \{0,1,2\}$ such that $|C_j(x) \cap V_{1} | \neq | C_j(x) \cap V_{2}|$ thus $\eta_j (v)\neq 0$. This means that the set of oriented arcs $\{A_i\}_{i=i}^{h_2(T)}$ is allowable for $U$ and $p \leq h_2(T)$.
 
 Now assume there exists an allowable collection of arcs $\{A_i\}_{i=1}^p$ for $U$. We are going to trim every arc just enough so that its end points are now vertices of degree three $x_i$ and $y_i$. Let $V_1 = \{x_1, \dots, x_p\}$ and $V_2=\{y_1, \dots, y_p\}$. Since every vertex of $U$ belonged to at least one arc $A_i$, the leaves of $F(T)$ must be endpoins of the trimmed arcs, thus belong to $V_1$ or $V_2$. Finally consider $v \in U$ an essential vertex of $F(T)$. If $|C_j(v) \cap V_1| =|C_j(v) \cap V_2| $ for every $j \in \{0,1,2\}$ then $\eta_j (v)=0$ for $j =0,1,2$ which is a contradiction (see for example Figure \ref{arcos}). Thus $h_2(T) \leq p$.
 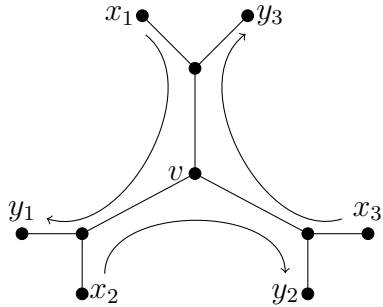
\begin{figure}[h!]
   \centering
 \begin{tikzpicture}
\node[circle, draw, scale=.4, fill=black] (2) at (1.5,-.4){};
\node[circle, draw, scale=.4, fill=black] (4) at (1.5,1){};
\node (x11) at (1,1){};
\node (y33) at (2,1){};

\node[circle, draw, scale=.4, fill=black] (04) at (.8,1.7){};
\node (x1) at (0.5,1.7){$x_1$};
\node[circle, draw, scale=.4, fill=black] (40) at (2.2,1.7){};
\node (y3) at (2.5, 1.7){$y_3$};
\node[circle, draw, scale=.4, fill=black] (5) at (3,-1.2){};
\node[circle, draw, scale=.4, fill=black] (05) at (3,-2){};
\node (y2) at (2.7,-2) {$y_2$};
\node[circle, draw, scale=.4, fill=black] (50) at (3.8,-1.2){};
\node (x3) at (3.8, -0.9){$x_3$};
\node[circle, draw, scale=.4, fill=black] (6) at (0,-1.2){};
\node (y11) at (0, -0,7){};
\node[circle, draw, scale=.4, fill=black] (60) at (0,-2){};
\node (y1) at (-0.8,-0.9) {$y_1$};
\node[circle, draw, scale=.4, fill=black] (06) at (-0.8,-1.2){};
\node (x2) at (0.3, -2){$x_2$};
\draw (x1)[->, bend left=80] to (y1); 
\draw (x2)[->, bend left=90] to (y2); 
\draw (x3)[->, bend left=80] to (y3); 
\draw (5)--(50);
\draw (5)--(05);
\draw (6)--(60);
\draw (6)--(06);
\draw (40)--(4);
\draw (04)--(4);
\draw (2)--(6);
\draw (2)--(5);
\draw (2)node[left]{$v$}--(4);
\end{tikzpicture}
\caption{An example of the contradiction in the proof of Proposition \ref{eq}}\label{arcos}
\end{figure}
\end{proof}
 This means that Theorem \ref{tercero} and Theorem \ref{sch} (1) are equivalent. Thus we only have one case remaining.
 
\begin{prop}
The hypothesis of Theorem \ref{mayorj} imply the hypothesis of Theorem \ref{sch} (2b).
\end{prop}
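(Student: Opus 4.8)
The plan is to specialize Theorem~\ref{mayorj} to $s=2$ and to realize the combinatorial data it produces as a collection of oriented arcs meeting Scheirer's requirements. With $s=2$ one has $n=2q+\varepsilon$ where $q=m-\lfloor k/2\rfloor+l$, and the hypotheses of Theorem~\ref{mayorj} supply a set $V$ of $l$ degree-three vertices together with disjoint subsets $U_1,U_2$ of the remaining $k-l$ degree-three vertices whose union is all of them, with $|U_1|=|U_2|=\lfloor k/2\rfloor$ (so in particular $l=k-2\lfloor k/2\rfloor\in\{0,1\}$), such that for every $v\in V$ some direction $j$ satisfies $|C_j(v)\cap U_1|\neq|C_j(v)\cap U_2|$. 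First I would record the numerical conditions of Scheirer's case~(2b): that $n<2m$, which follows from $l<\lceil k/2\rceil$ exactly as in the sharpness corollaries, and that $k<2(q-(m-k))$, which holds because $q-(m-k)=\lceil k/2\rceil+l$ and hence $2(q-(m-k))=2\lceil k/2\rceil+2l>k$ unless $k$ is even with $l=0$ --- a case already falling under~(2a).

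The core of the argument is an arc construction together with a short computation of Scheirer's integers $\eta_j$. Write $U_1=\{x_1,\dots,x_t\}$ and $U_2=\{y_1,\dots,y_t\}$ with $t=\lfloor k/2\rfloor$, and let $A_i$ be the unique arc of $T$ from $x_i$ to $y_i$, oriented from $x_i$ to $y_i$. Since $U_1$ and $U_2$ are disjoint sets of distinct essential vertices, the $2t$ endpoints are distinct essential vertices none of which is an endpoint of another arc, so Scheirer's condition~(A) holds, and no vertex of degree greater than three is an endpoint, so all $m-k$ of these are non-endpoints and~(B) holds with $r=m-k$. The key point is that the local contribution of $A_i$ to $\eta_j(w)$ equals $[x_i\in C_j(w)]-[y_i\in C_j(w)]$ for every vertex $w$ and direction $j$ (this is $\pm1$, with sign fixed by the orientation, when $w$ separates $x_i$ from $y_i$, and $0$ otherwise); summing over $i$, and using that the pairing exhausts $U_1$ and $U_2$, gives
$$\eta_j(w)=|C_j(w)\cap U_1|-|C_j(w)\cap U_2|\quad\text{for all }w\notin U_1\cup U_2.$$
Thus the distinguishing hypothesis at each $v\in V$ is precisely the assertion that some $\eta_j(v)\neq0$, and since no $v\in V$ is an endpoint, the collection $\{A_i\}_{i=1}^{t}$ is allowable for $V$.

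It remains to supply, for condition~(C), an allowable set $U$ of degree-three vertices with $|U|\geq q-r-p=2l$. When $k$ is even ($l=0$) this is automatic with $U=\emptyset$ (and we are again in case~(2a)); when $k$ is odd we have $l=1$ and $|V|=1$ but need $|U|\geq2$, so I would enlarge the allowable set by extending or rerouting one arc past a vertex $u$ of $U_1\cup U_2$, turning $u$ into an interior point lying on exactly one arc --- hence carrying two nonzero $\eta$'s --- so that $u$ may be adjoined to $U$, while leaving~(A), (B) and allowability for $V$ unaffected. For $\varepsilon=1$ I would instead, mirroring the ``add the root vertex'' device in the proof of Theorem~\ref{mayorj}, designate one arc (or a short arc through the root, whose endpoints are unrestricted in Scheirer's formulation) as the arc $A_0$, let its interior absorb a collection $W'$ of degree-three vertices, and rerun the analysis with the remaining arcs to obtain $W$ disjoint from $W'$ with $|W|\geq q-r-p-|W'|$, which is Scheirer's case~(2b)(ii). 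Combined with Theorem~\ref{sch}, this gives $TC(U\mathcal{D}^nT)=2q$, matching the lower bound of Theorem~\ref{mayorj}.

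The step I expect to be the genuine obstacle is the one just described: making the allowable set $U$ reach the size $q-r-p$ required by~(C) while keeping~(A), (B) and allowability simultaneously. The naive perfect matching already yields~(A), (B) and allowability for $V$, but since every degree-three vertex outside $V$ is an arc endpoint, in the odd case one must squeeze an extra interior degree-three vertex into $U$ without creating $\eta$-cancellations or new high-degree endpoints; carrying this out in an arbitrary tree, using the geometry of $T$ and the precise meaning of ``can be separated into'' in Theorem~\ref{mayorj}, is the delicate point.
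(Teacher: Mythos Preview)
Your overall architecture matches the paper's: set $s=2$, verify the numerical inequality $k<2(q-(m-k))$, pair the elements of $U_1$ with those of $U_2$ into $p=\lfloor k/2\rfloor$ arcs, and check that the resulting collection is allowable for $V$. Your identity
\[
\eta_j(w)=|C_j(w)\cap U_1|-|C_j(w)\cap U_2|
\]
is a cleaner way to see allowability than the paper offers; the paper simply asserts allowability without isolating this formula.

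The genuine divergence is in how condition~(C) is met. You keep all arc endpoints among the degree-three vertices, so $r=m-k$ and $q-r-p=2l$; since $|V|=l$, you are short, and you propose to close the gap by rerouting an arc to pick up an extra interior degree-three vertex. The paper instead observes that Scheirer's condition~(A) only asks that endpoints be \emph{essential}, not of degree three, and extends $l$ of the arcs so that they terminate at vertices of degree~$>3$. This makes $l$ high-degree vertices into endpoints, dropping $r$ to $m-k-l$, and simultaneously frees the formerly-terminal degree-three vertices to serve as interior points (hence candidates for $U$). That single move is exactly what dissolves the obstacle you flag at the end, and it is the idea you are missing: rather than enlarge the allowable set against a fixed threshold, lower the threshold by spending high-degree vertices as endpoints.

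Two smaller points. First, the paper, like you, only writes out the case $\varepsilon=0$; your sketch for $\varepsilon=1$ via an unrestricted arc $A_0$ is in the right spirit but is not carried further in the paper either. Second, you correctly deduce from $U_1\cap U_2=\emptyset$ that $l\in\{0,1\}$; the paper's proof is phrased as if larger $l$ were possible (treating $U_1,U_2$ as a listing with repetitions), so its bookkeeping and yours do not line up literally, but the extension-to-high-degree-vertices device is the transferable content.
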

\begin{proof}
Assume first that $\epsilon=0$.
 Let $l' = \lfloor  \frac{k}{2} \rfloor -l>0$, since $ l >0$ we have $\lfloor  \frac{k}{2} \rfloor -l < \lfloor  \frac{k}{2} \rfloor$ thus $ l' < \frac{k}{2}$ and $2q = 2m-2l' > 2m-k$ which means that $k> 2m-2q$ and therefore $k= -k +2k <2q-2m+2k = 2(q-(m-k))$.  
 
We know there are $U_1$ and $U_2$ two sets of vertices of degree three such that $|U_i| = \lfloor \frac{k}{2} \rfloor =p$ and a set $V$ of $l$ vertices of degree three, disjoint from $U_1$ and $U_2$.
Let $U_1 = \{x_1, \dots, x_p\}$ and $U_2 =\{y_1, \dots, y_p\}$. We are going to construct arcs in a way similar to the proof of \ref{eq}, but this time we do want the vertices of $U_1$ and $U_2$ to be endpoints of the arcs as long as they are distinct. 

Consider, for every $i \in \{1,\dots, p\}$, the unique $x_i,y_i$-path in $T$. Since in $U_1 \cup U_2$ there are only $k-l$ distinct vertices, we must extend $l$ paths (or less paths if we extend some paths on both sides) to a vertex of degree higher than tree to obtain the arcs $\{A_i\}_{i=1}^p$ such that they have disctinct essential vertices as endpoints. Then $\{A_i\}_{i=1}^p$ is allowable for the set $V$ of $l$ vertices of degree three, and the amount of vertices of degree higher than three which are not an endpoint of an arc is $r= m-k-l$ thus $q-r-p = m- \lfloor \frac{k}{2} \rfloor +l -(m-k-l) - \lfloor \frac{k}{2} \rfloor = k - 2\lfloor \frac{k}{2} \rfloor \leq l$. This means that the hypothesis of Theorem \ref{sch} (2a) hold.
\end{proof}
 
In conclusion, a generalization of Schreirer's results for $TC_s$ in a more geometric way, would be to cover the set of vertices of degree three, with an "allowable" collection of $s$-stars instead of arcs, where an $s$-star is a subdivision of the complete bipartite graph $K_{1,s}$.  The problem of this, is that we can not give the stars an orientation like the arcs and therefore canot define the numbers $\eta_j.$

\begin{ejem} 
Recall the tree $T$ from Figure \ref{gris}. Then for $s=3$ we can cover it with the three 3-stars of Figure \ref{estrellas}.
\begin{figure}[h!]
\centering
\begin{tikzpicture}[every node/.style={circle, draw}, scale=1.0,
rotate = 180, xscale = -1]

\node[scale=.6] (1) at ( 2.2, 2.68) {};
\node[scale=.6] (2) at ( 1.3, 1.79) {};
\node[scale=.6] (4) at ( 2, 1) {};
\node[scale=.6] (5) at (2,0){};
\node[scale=.6] (7) at ( 1.3, 3.7) {};
\node[scale=.6] (9) at ( 1.3, 4.55) {};
\node[scale=.6] (11) at ( 1.7, 5.2) {};
\node[scale=.6] (12) at ( 3.42, 2.68) {};
\node[scale=.6] (18) at ( 5.35, 1.75) {};
\node[scale=.6] (19) at ( 5.35, 1) {};
 
\draw (4)--(5);
\draw (2) -- (1);
\draw (4) -- (2);
\draw (7) -- (1);
\draw (9) -- (7);
\draw (11) -- (9);
\draw (12) -- (1);
\draw (19) -- (18);
\draw (18) -- (12);

\node[scale=.6] (1x) at ( 8.2, 2.68) {};
\node[scale=.6] (2x) at ( 7.3, 1.79) {};
\node[scale=.6] (3x) at ( 6.5, 1.3) {};
\node[scale=.6] (7x) at ( 7.3, 3.7) {};
\node[scale=.6] (9x) at ( 7.3, 4.55) {};
\node[scale=.6] (10x) at ( 6.7, 5.2) {};
\node[scale=.6] (12x) at ( 9.42, 2.68) {};
\node[scale=.6] (15x) at ( 11.35, 3.6) {};
\node[scale=.6] (16x) at ( 11.35, 4.35) {};

\draw (2x) -- (1x);
\draw (3x) -- (2x);
\draw (9x) -- (10x);
\draw (7x) -- (1x);
\draw (7x) -- (9x);
\draw (16x) -- (15x);
\draw (1x) -- (12x);
\draw (12x) -- (15x);

\node[scale=.6] (1xy) at ( 12.2, 2.68) {};
\node[scale=.6] (12xy) at ( 13.42, 2.68) {};
\node[scale=.6] (2xy) at ( 11.3, 1.79) {};
\node[scale=.6] (4xy) at ( 12, 1) {};
\node[scale=.6] (6xy) at (13,1){};
\node[scale=.6] (15xy) at ( 15.35, 3.6) {};
\node[scale=.6] (16xy) at ( 16.2, 3.6) {};
\node[scale=.6] (18xy) at ( 15.35, 1.75) {};
\node[scale=.6] (20xy) at ( 16, 1.75) {};
\draw (2xy) -- (1xy);
\draw (1xy) -- (12xy);
\draw (12xy) -- (15xy);
\draw (2xy) -- (4xy);
\draw (12xy) -- (18xy);
\draw (18xy) -- (20xy);
\draw (15xy) -- (16xy);
\draw (4xy) -- (6xy);

\end{tikzpicture}
\caption{The tree $3$-stars covering the tree from Example \ref{ejgris}. }\label{estrellas}
\end{figure}
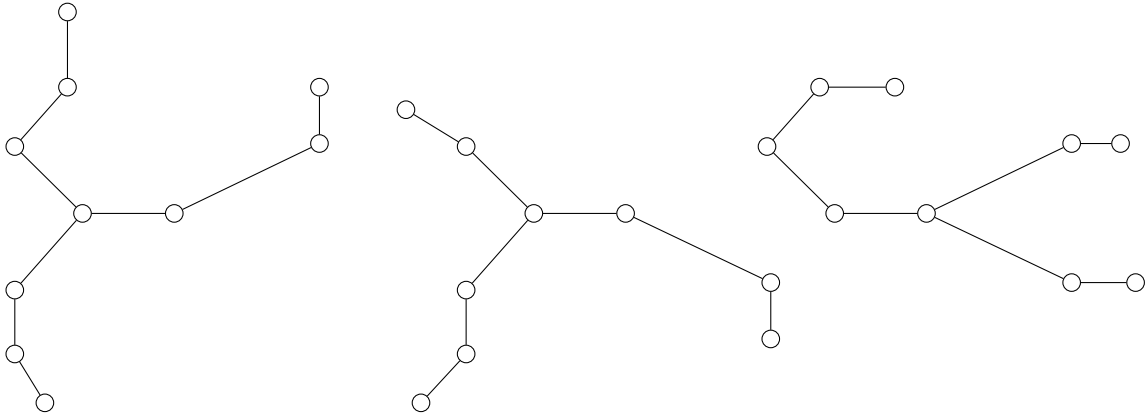 
\end{ejem}


\begin{thebibliography}{XX}

\bibitem{A} Aaron David Abrams. Configuration spaces and braid groups of graphs. ProQuest LLC, Ann Arbor, MI,
2000. Thesis (Ph.D.)–University of California, Berkeley.

\bibitem{J} Jorge Aguilar-Guzm\'an, Jes\'us Gonz\'alez, and Teresa Hoekstra-Mendoza, Farley-Sabalka's Morse-theory model and the higher topological complexity of ordered configuration spaces on trees, Discrete Comput. Geom. 67 (2022), no 1, 258-286

\bibitem{B} Ibai Basabe, Jes\'us Gonz\'alez, Yuli B. Rudyak y Dai Tamaki, Higher topological complexity
and its symmetrization, Algebraic and Geometric Topology. 14 (2014), no. 4, 2103–2124.

\bibitem{F} Michael Farber,  Topological Complexity of Motion Planning. Discrete Comput Geom 29, 211–221 (2003)

\bibitem{FS}  Daniel Farley, and Lucas Sabalka, Discrete {M}orse theory and graph braid groups, Algebr. Geom. 5 (2005), 1075--1109.

\bibitem{df} Daniel Farley, Homology of tree braid groups, Topological and asymptotic aspects of group theory, Contemp. Math. 394 (2006), 101--112.

\bibitem{Fo} Robin Forman, Discrete Morse theory and the cohomology ring. Trans. Amer. Math. Soc.,
354(12):5063--5085, 2002.

\bibitem{GH} Jes\'us Gonz\'alez, Teresa Hoekstra-Mendoza, Cohomology ring of tree braid groups and exterior face rings, Transactions of the American Mathematical Society (to appear)

\bibitem{KM} Tomasz Kaczynski and Marian Mrozek, The cubical cohomology ring: an algorithmic approach.
Found. Comput. Math., 13(5):789–818, 2013.

\bibitem{S} Steven Scheirer, Topological complexity of $n$ points on a tree,  Algebraic and Geometric Topology 18(2) (2016)

\end{thebibliography}
\end{document}